\newtheorem{cor}{Corollary}[section]
\begin{document}
\title{Batch latency analysis and phase transitions for a tandem of queues with exponentially distributed service times}

\author{Jinho Baik\footnote{Department of Mathematics, University of Michigan, Ann Arbor, MI 48109, USA} and~Raj Rao Nadakuditi\footnote{Department of Electrical and Computer Engineering, University of Michigan, Ann Arbor,
MI, 48109, USA}}

\maketitle

\begin{abstract}
We analyze the latency or sojourn time $L(m,n)$ for the last customer in a batch of $n$ customers to exit from the $m$-th queue in a tandem of $m$ queues  {in the setting where  the queues are in equilibrium before the batch of customers arrives at the first queue.
We first characterize the distribution of $L(m,n)$ exactly for every $m$ and $n$, under the assumption that the queues have unlimited buffers and that each server has customer independent, exponentially distributed service times with an arbitrary, known rate.
We then evaluate the first two leading order terms of the distributions in the  large $m$ and $n$ limit and bring into sharp focus the existence of phase transitions in the system behavior.
The phase transition occurs due to the presence of either slow bottleneck servers or a high external arrival rate.
We determine the critical thresholds for the service rate and the arrival rate, respectively,  about which this phase transition occurs; it turns out that they are the same.}
This critical threshold depends, in a manner we make explicit, on the individual service rates, the number of customers and the number of queues but not on the external arrival rate.

\end{abstract}

\section{Introduction}

Tandem queues are important models for production systems \cite{li2008production} and communication networks \cite{xie2009towards}. The fact that the output process of a server is the arrival process for the subsequent queue makes tandem queues difficult to analyze. There are several results in the literature on the waiting time \cite{reich1957waiting} of customers in such queues \cite{friedman1965reduction,Tembe-Wolff74}, moment generating functions \cite{fidler2006end}, scaling properties \cite{ciucu2006scaling}, heavy traffic approximations \cite{harrison1978diffusion} and bounds thereof \cite{van1988simple,niu1980bounds,niu1981comparison}, to list a few.  These results capture the behavior of a single randomly selected customer. 
In this paper, in contrast, we are interested in the latency or sojourn time of a \textit{batch} of $n$ customers entering a system of $m$ tandem queues with unlimited buffers that are in equilibrium 
due to an external arrival process of rate $\alpha$ at the first queue before the first customer of the batch arrives; the same setup with $\alpha=0$ was considered by Glynn and Whitt in their seminal paper \cite{Glynn-Whitt91} and more recently by Baccelli, Borovkov and Mairesse  in \cite{Baccelli99asymptoticresults}.  
\textcolor{black}{The batch latency  analysis problem is motivated by networking applications such as peer-to-peer sharing where a (large) file is transferred from a source to a destination over a large multi-hop network \cite{conti2007multihop,ding2004peer}.}

This setting is considerably more difficult to analyze at a level that captures what happens as the  ``wave'' of customers traverses the network.  Of particular relevance to this work is using the analysis to provide insights on what happens when a few bottleneck servers are particularly slow or when the rate $\alpha$ of the external arrival process is relatively large.

%\textcolor{red}{The latency time in a tandem queue is well known to be recast as the last passage time in a directed last passage percolation (DLPP) \cite{Tembe-Wolff74,Glynn-Whitt91}. We show that }

In this paper, we leverage recent results from directed last passage percolation (DLPP) and random matrix theory %(e.g. \cite{Borodin-Peche08a})
(e.g. \cite{Johansson00,Okounkov01a,Borodin-Peche08a})
to show that the latency distribution equals that of the largest eigenvalue of a specially constructed random matrix (see Theorem \ref{thm:exact}).

\begin{comment}
The connection between batch latency in a tandem of queues, and directed last passage percolation and random matrix theory
was also discussed and used, for example, in \cite{OConnell00,OConnell03,Draief-Mairesse-OConnell,Martin}.
An immediate goal in deriving the distribution is to document in this paper a very  general result attainable using the underlying theory.
\end{comment}

{
We then employ $m,n \to \infty$ asymptotics to uncover a phase transition which separates a regime in which the presence of slow bottleneck servers results in a latency that differs 
from the case where there are no slow servers. 
It is shown that %, assuming there are finitely many slow servers, they do not affect the latency to the leading order unless their rates are smaller than a critical rate.
the leading order of the latency is affected only if the slowest server has the service rate below a critical threshold.
We also show that the next order fluctuations change in this case with different order of the variance.
%In addition to the leading order asymptotics, we also consider the next order fluctuations and show that the fluctuations of the latency are normally distributed with variance $O(n)$ when sufficiently slow servers are present but the fluctuations are given by the complex Tracy-Widom distribution with variance $O(n^{2/3})$ when the rates of the slow servers are above the critical value.
The analysis highlights why optimizing the service rates of a limited or $o(n)$  number of bottleneck servers might produce asymptotically vanishing gains in the latency reduction achieved.
}

{
A similar phase transition occurs with respect to the external arrival rate $\alpha$. 
In this case the leading order of the latency is affected if $\alpha$ is higher than a critical threshold.
It is shown that this critical threshold is the same as the one that arises from the slow server phase transition analysis. 
}

%Our emphasis on uncovering the phase transitions in the presence of slow servers or when the arrival rate is what differentiates this work from related work in the DLPP literature. 

\begin{comment}
It is well-known that the batch latency for a tandem of queues is related to
directed last passage percolation (DLPP) models \cite{Glynn-Whitt91}.
Among the DLPP models, there are certain `solvable' models which are known to be connected to random matrix theory (RMT) \cite{Johansson00,Borodin-Peche08a}.
The analysis of these solvable DLPP models has been a very active area of research in probability and statistical physics (see e.g. \cite{Johansson02,Ferrari-Spohn,Corwin12}).
Combining these facts we arrive at the above result. See Section  \ref{sec:proofexact}. 

The above theorem allows one to use the method from RMT to study the batch latency.
Indeed RMT is a subject of rich history and techniques, and has been studied extensively in mathematics, statistics, physics, and engineering
\cite{Mehta04,Akemann-Baik-DiFrancesco11,Bai-Silverstein09}.
However, for the asymptotic results in the next subsections, we do not use this connection directly.
Instead we use the connection to solvable DLPP models mentioned above.
The key fact is that there are explicit formulas for the distribution functions of the last passage times for solvable DLPP models.
The asymptotic results in the next sections are obtained by analyzing these explicit formulas.
\end{comment}

{
%The analytical characterization of the latency distribution and the phase transition phenomenon are the main contributions of this paper, which is organized as follows.
The paper is organized as follows.
In Section \ref{sec:main results}, we state the problem addressed in this paper formally and present the main results. 
Some examples where our results may be applied along with a numerical computation are discussed in Section \ref{sec:examples}. 
The proofs of the main results are provided in Sections \ref{sec:proofexact} and \ref{sec:proof2}. Section \ref{sec:intuition} contains an intuitive explanation for the origin of the phase transition. 
}

\section{Main results}\label{sec:main results}

\subsection{Problem}

Consider a tandem of $m$ queues associated with $m$ servers labeled from left to right as $S_{1}, \ldots, S_{m}$.
We assume that each queue is $M/M/1$ and that the service time at server $i$ is exponentially distributed with a customer-independent rate equal to  $\mu_{i}$.
Each customer starts at queue $1$. After being served by $S_1$, the customer joins queue $2$ and waits for the turn to be served by $S_2$. After being served, the customer then joins  queue $3$ and so on until the customer exists  queue $m$.
We assume that the queue buffers are infinitely long and there are no external arrivals in the system except to queue $1$.
The arrival process to the first queue is an independent Poisson process of rate $\arr\ge 0$.
Suppose that the system is in equilibrium.
We assume that $\alpha < \mu_{i}$ for all $i$ so that the queues are stable.

Now suppose that we send in a batch of $n$ customers to queue $1$.
We assume that this arrival is according to the arrival Poisson process: we may think that when a new customer arrives at queue $1$ according to the Poisson process, we send in $n-1$ more customers immediately.
Let $L(m,n)$ denote the latency or sojourn time for the last of the batch of $n$ customers to exit from the last queue $m$.
Here the time is measured from the instance when the $n$ customers arrive at queue $1$.
If we assume that the batch of customers is sent in at an arbitrary time, rather than as a part of the arrival Poisson process, it is easy to see that the asymptotic results in this section still hold even though the exact result, Theorem~\ref{thm:exact}, does not hold.

{We note that the $\alpha = 0$ case corresponds to the setting where the queues are initially empty. The batch of $n$ customers are sent into  queue $1$ and the latency is measured from this point.}

\begin{comment}
In this paper we characterize the distribution of $L(m,n)$ for this system in both the finite and large $n$ and $m$ settings.
In the large $n$ and $m$ setting, we observe a phase transition phenomenon that the exit time does not change to the leading (and the next leading) order even though a few servers have slow rates than others if the rates are above a certain threshold.
There is also a similar threshold for the arrive rate $\alpha$ at which the leading order of the exit time changes.
\end{comment}

\subsection{Exact distribution}

{
The standard complex normal distribution is denoted by $\mathcal{CN}(0,1)$.
The notation $x \sim \mathcal{CN}(0,1)$ means that $ x= u + i \,  v$ where $u$ and $v$ are i.i.d. $\mathcal{N}(0,1/2)$}.

\begin{thm}[Exact distribution]
\label{thm:exact}
Define the diagonal matrices
$$\Sigma = \textrm{diag}(1/\mu_{1},\ldots,1/\mu_{m}) \textrm{ and } \Gamma = \textrm{diag}\left(1/(\mu_{1} - \alpha),\ldots,1/(\mu_{m}-\alpha)\right).$$
Let $G$ and $g$ denote an $m \times (n-1)$ matrix and an $m\times 1$ vector of i.i.d. $\mathcal{CN}(0,1)$ entries, respectively.
Consider the random Hermitian matrix
$$W = \Gamma^{1/2} gg^{*} \Gamma^{1/2} + \Sigma^{1/2}GG^{*} \Sigma^{1/2}.$$
Then we have that for all $n$ and $m$,
$$L(m,n) \overset{\mathcal{D}}{=} \Lambda_{\max}(W).$$
Here $\Lambda_{\max}(W)$ denotes the largest eigenvalue of $W$ and $\overset{\mathcal{D}}{=}$ denotes equality in distribution.
\end{thm}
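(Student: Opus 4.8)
The plan is to pass through the well-known identity between batch latency in a tandem of $M/M/1$ queues and a directed last passage percolation (DLPP) time, and then invoke the known correspondence between exponential DLPP and the largest eigenvalue of a Laguerre-type random matrix ensemble. The first step is to recall, following Glynn--Whitt, the representation of the latency as a last passage time. If $\tau_{i,j}$ denotes the service time of the $j$-th batch customer at server $i$ (i.i.d. exponential with rate $\mu_i$ along row $i$), and if $A_j$ denotes the time at which the $j$-th batch customer arrives at (or rather, is free to enter) queue $1$ accounting for the customers already present there in equilibrium, then
$$
L(m,n) = \max_{\pi} \sum_{(i,j)\in\pi} \tau_{i,j} \;+\; (\text{boundary contribution from the equilibrium backlog}),
$$
where $\pi$ ranges over monotone lattice paths from $(1,1)$ to $(m,n)$. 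The point is that the $\alpha>0$ equilibrium does not merely shift the result by a constant; it contributes an extra independent geometric/exponential ``staircase'' of work that the batch must wait behind at the first queue, and by a standard reversibility/Burke's-theorem argument this backlog has the same law as a column (or an extra row) of exponentials whose rates involve $\mu_i-\alpha$ rather than $\mu_i$. Making this precise — identifying exactly which extra random variables the equilibrium injects, and with which rates — is where the matrix $\Gamma$ enters, and I expect this to be the main obstacle; one must argue carefully (e.g.\ via the output theorem for the $M/M/1$ queue in equilibrium, or via a direct coupling to a half-space DLPP model) that the equilibrium contribution is exactly a rank-one ``$gg^*$ with rates $\mu_i-\alpha$'' piece rather than something more complicated.

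Next I would translate the DLPP time into an eigenvalue statement. The key input, assumed from the literature (Johansson, Borodin--Péché, and the references cited in the excerpt), is that for a DLPP model on an $m\times n$ grid with independent exponential weights of rate $\mu_i$ in row $i$, the last passage time to $(m,n)$ is distributed as the largest eigenvalue of $\Sigma^{1/2}GG^*\Sigma^{1/2}$, where $G$ is $m\times n$ with i.i.d.\ $\mathcal{CN}(0,1)$ entries and $\Sigma=\mathrm{diag}(1/\mu_1,\dots,1/\mu_m)$ — this is the ``solvable'' Laguerre correspondence (equivalently, exponential-weight LPP equals the top particle of a Schur-measure/RSK ensemble). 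Applying this with the grid being the original $m\times(n-1)$ batch block gives the $\Sigma^{1/2}GG^*\Sigma^{1/2}$ term; applying a rank-one version of the same correspondence to the single extra column of equilibrium work gives the $\Gamma^{1/2}gg^*\Gamma^{1/2}$ term, with $g$ an $m\times 1$ Gaussian vector and $\Gamma=\mathrm{diag}(1/(\mu_i-\alpha))$.

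Finally I would combine the two pieces. Because the batch service times and the equilibrium backlog are independent, the two Gaussian blocks $g$ and $G$ are independent, and the DLPP maximum over paths that first traverse the extra column and then the $m\times(n-1)$ block matches the largest eigenvalue of the sum $W=\Gamma^{1/2}gg^*\Gamma^{1/2}+\Sigma^{1/2}GG^*\Sigma^{1/2}$. Here one uses that appending an independent rank-one Laguerre term corresponds, on the DLPP side, to adjoining an extra column of weights with the shifted rates — this is precisely the content of the Borodin--Péché deformation / multiplicative-statistics identity, and it is the step that makes the two disparate contributions assemble into a single Hermitian matrix whose spectrum is the object of interest. Putting $n$ versus $n-1$ bookkeeping aside (the extra column accounts for the missing $n$-th column in $G$), this yields $L(m,n)\overset{\mathcal D}{=}\Lambda_{\max}(W)$ for every $m,n$, which is the claim. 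The delicate points, to be handled with care in the full proof, are (i) the exact identification of the equilibrium backlog as a rank-one shifted-rate contribution, and (ii) verifying that the DLPP-to-eigenvalue dictionary holds in the non-homogeneous (row-dependent rate) and rank-one-deformed setting rather than only in the i.i.d.\ case.
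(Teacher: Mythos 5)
Your overall route is the same as the paper's: Glynn--Whitt DLPP reduction, identify the equilibrium effect as a shift of the first column's rates from $\mu_i$ to $\mu_i-\alpha$, then invoke the Borodin--P\'ech\'e exponential-DLPP/Laguerre correspondence. But two steps are framed in a way that would cause trouble if you tried to write them out.

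First, you flag the identification of the equilibrium contribution as "the main obstacle," reaching for reversibility/Burke-type arguments or a half-space coupling. The paper's argument is much simpler: the stationary sojourn time (waiting plus service) of an arriving customer in an $M/M/1$ queue with service rate $\mu_i$ and Poisson arrival rate $\alpha<\mu_i$ is exponential with rate $\mu_i-\alpha$, and these sojourn times are independent across the $m$ queues. So the first batch customer's passage through the tandem "is as if" the queues were empty but with rates $\mu_i-\alpha$; the remaining $n-1$ customers see rates $\mu_i$. This gives an honest $m\times n$ DLPP weight array with $w(i,1)\sim\mathrm{Exp}(\mu_i-\alpha)$ and $w(i,j)\sim\mathrm{Exp}(\mu_i)$ for $j\ge 2$, all independent — no extra "staircase" construction required.

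Second, the combination step is not, and cannot be, two separate applications of a Laguerre correspondence that you then glue together: the last passage time through the full $m\times n$ grid is \emph{not} a simple function of the LPP times on the first column and on the remaining $m\times(n-1)$ block considered in isolation. The correct move is to notice that the full weight array is already of the solvable form $w(i,j)\sim\mathrm{Exp}(a_i+b_j)$ with $a_i=\mu_i$, $b_1=-\alpha$, $b_j=0$ ($j\ge 2$), and apply Borodin--P\'ech\'e \emph{once} to conclude $L(m,n)\overset{\mathcal D}{=}\Lambda_{\max}(XX^*)$ with $X_{ij}\sim\mathcal{CN}(0,1/(a_i+b_j))$. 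The split $XX^*=\Gamma^{1/2}gg^*\Gamma^{1/2}+\Sigma^{1/2}GG^*\Sigma^{1/2}$ is then just the deterministic identity obtained by separating the first column of $X$ from the others; it is not a random-matrix theorem and does not require any independence argument beyond what is already built into $X$. With those two adjustments your proposal becomes the paper's proof.
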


{
This result follows from
interpreting the tandem queues model in terms of directed last passage percolation (DLPP) model \cite{Tembe-Wolff74,Glynn-Whitt91,Prahofer-Spohn02a} and then using a result due to Borodin and P\'ech\'e \cite{Borodin-Peche08a} on the relation between certain DLPP models and random matrices. See Section~\ref{sec:proofexact}.
}

\begin{rmk}\label{rmk:ordering}
Note that since the normal distribution is rotationally invariant, the distribution of the eigenvalues of $W$ is unchanged even if we re-arrange the ordering of $\mu_i$'s. Hence we find that the distribution of $L(m,n)$ does not depend on the ordering of $\mu_i$'s, a fact first proved in  \cite{Weber79}.
\end{rmk}

%Indeed the connection of $L(m,n)$ and $\lambda_{\max}(W)$ was obtained by noting that
%the distribution functions of both of them are precisely the same after having computed them separately.
%This will be discussed in Section  \ref{sec:solvable}.

\subsection{Asymptotic result I. Leading order}

%The next theorem is about the asymptotics of $L(m,n)$  in the large $n$, $m$ regime.
We first  introduce some definitions and assumptions.
Let
\begin{equation}
	\ser_{(m)}\le  \cdots\le \ser_{(2)} \le  \ser_{(1)}
\end{equation}
be an ordered re-arrangement of the services rates $\ser_1, \cdots, \ser_m$.
Consider the probability measure (the spectral measure for the service rates)
\begin{equation}
\begin{split}
	H_{m} =  \frac1{m} \sum_{i=1}^{m} \delta_{\ser_i}
	=  \frac1{m} \sum_{i=1}^{m} \delta_{\ser_{(i)}}.
\end{split}
\end{equation}
Assume that there is a compactly supported probability measure $H$ such that
\begin{equation}\label{eq:HmtoH}
\begin{split}
	H_{m} \to H \qquad \text{weakly}
\end{split}
\end{equation}
as $m \to \infty$ (i.e. $\int_{\R} g(x) dH_m(x) \to \int_{\R} g(x) dH(x)$ for all bounded continuous functions $g$.)
This includes  examples such as (a) $\mu_i=1$ for all $i$, (b) $\mu_{i_0}=\mu<1$ and $\mu_i=1$ for all $i\neq i_0$,
and (c) $\mu_i= F^{-1}(\frac{i}{m})$ for $F(x):= \int_{-\infty}^xf(y) dy$ for a piecewise-continuous, non-negative and compactly support function $f(y)$ with total mass $1$.
We denote the support of $H_m$ and $H$ by $supp(\meas_m)$ and $supp(\meas)$ respectively.
Note that the minimum of  $supp(\meas_m)$ is $\mu_{(m)}$.

Define the function
%\textcolor{red}{are the $m$ and $n$ here correct?}
\begin{equation}\label{eq:lz}
	l_m(z) := m \int \frac{dH_m(y)}{y-z} + \frac{n}{z}, \qquad z\notin supp(H_m).
\end{equation}
See Figure~\ref{fig:functionlfirst} for an example.
It is easy to see that $l_m''(z)>0$ for $z\in (0, \mu_{(m)})$.
{Since $l_m'(z)\to -\infty$ as $z\downarrow 0$ and $l_m'(z)\to +\infty$ as $z\uparrow \mu_{(m)}$,  there is a unique point $z\in (0, \mu_{(m)})$ such that $l_m'(z)=0$. }
We denote this point by $\lambda_m$;
\begin{equation}\label{eq:solvecrit}
	m\int \frac{dH_m(y)}{(y-\crit_m)^2} - \frac{n}{\crit_m^2} = 0 , \qquad 0< \crit_m< \mu_{(m)}.
\end{equation}
Note that $l_m'(z) <0$ for $0<z<\crit_m$ and $l_m'(z) >0$ for $\crit_m<z<\mu_{(m)}$.

\begin{figure}[htbp]
\centering
\subfigure[]
{  \includegraphics[scale=0.25]{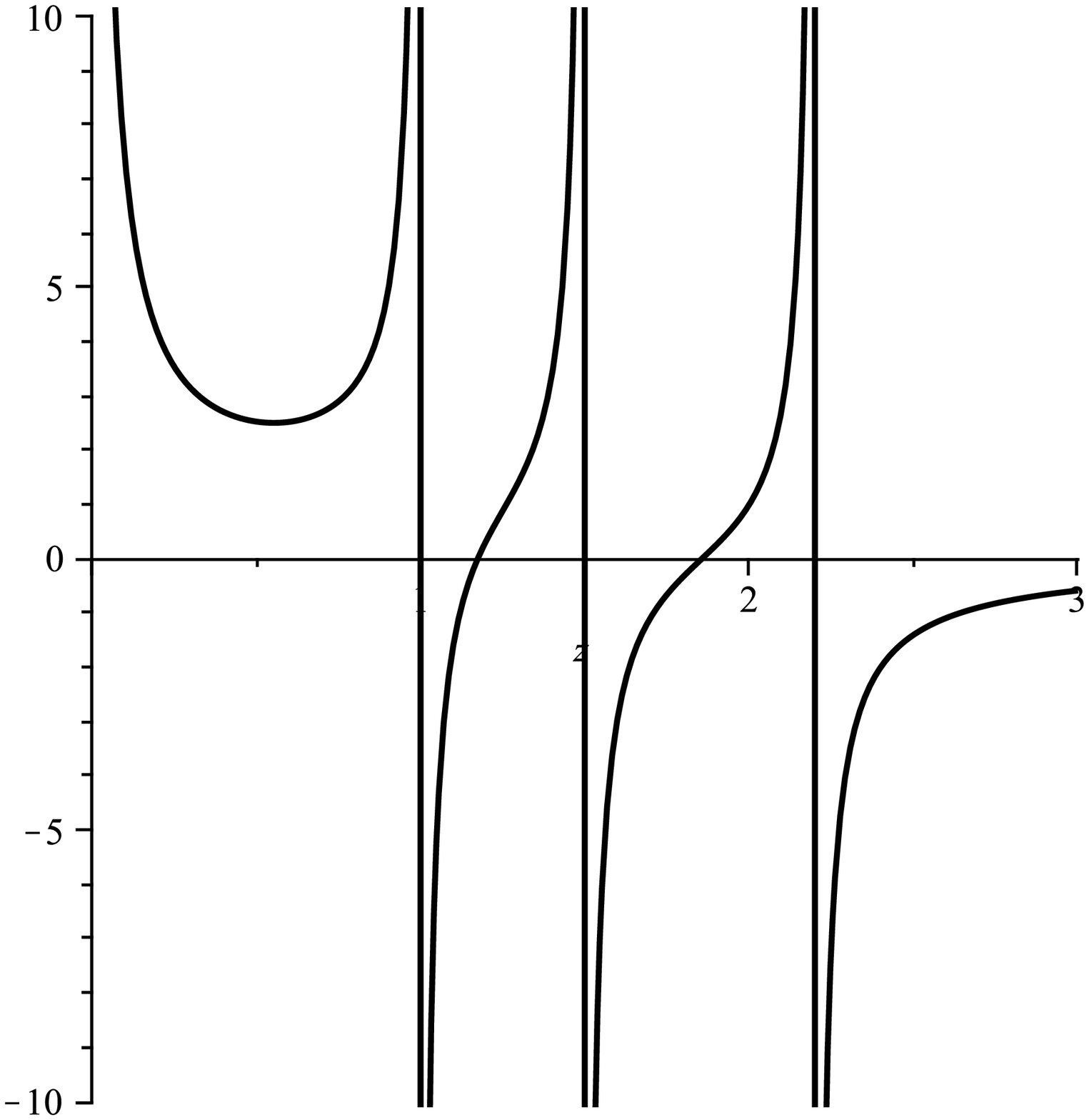}
}
\subfigure[]
{
 \includegraphics[scale=0.25]{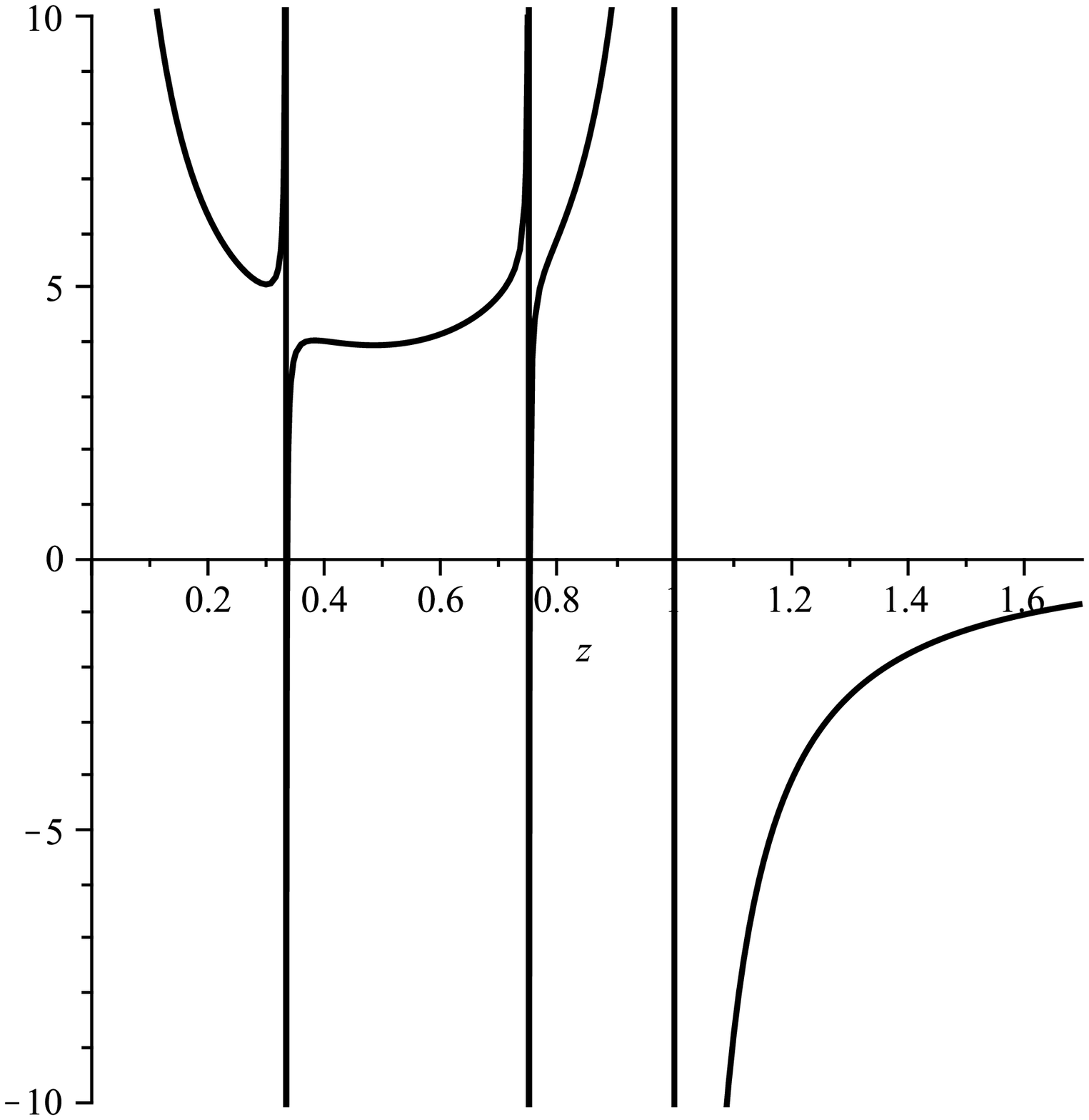}
}
\caption{Graphs of $\frac1{m}l_m(z)$. (a) is when $m=3, n=2$ and $\mu_1=1, \mu_2=1.5, \mu_3=2.2$.
(b) is when $m=100$, $n=100$, $\mu_1=\frac13$, $\mu_2=\frac34$, and $\mu_i=1$ for $i=3,\cdots, 100$.
In both cases, $\lambda_m$ is the argmin of $l_m(z)$ in the interval $(0, \mu_1)$.}
\label{fig:functionlfirst}
\end{figure}

%We are now ready to state the first asymptotic result.

\begin{thm}[Asymptotic result I. Leading order]\label{th:main}
Assume~\eqref{eq:HmtoH}.
Let $l_m(z)$ and $\crit_m$  be defined as in~\eqref{eq:lz}) and~\eqref{eq:solvecrit}) respectively.
Recall that $\alpha$ is the arrival rate and assume that $\alpha< \liminf_m \ser_{(m)}$.
From the definition, $\lambda_m< \ser_{(m)}$.
The following asymptotic result holds in probability
as $m,n\to\infty$ such that $m/n\to \gamma \in (0,\infty)$. % limit we have the following result.
\begin{enumerate}[(a)]
\item If $\alpha< \displaystyle{\liminf_{m\to\infty}} \crit_m$ and $\displaystyle{\liminf_{m\to\infty}} (\ser_{(m)}-\lambda_m)>0$, then
\begin{equation}
\begin{split}
	\frac{L(m,n)}{l_m(\crit_m)} \to 1.
\end{split}
\end{equation}

\item If $\displaystyle{\limsup_{m\to\infty}} \crit_m<\alpha$, %(note that necessarily $\alpha< \displaystyle{\liminf_{m\to\infty}}  \ser_{(m)}$ by assumption),
then
\begin{equation}
\begin{split}
	\frac{L(m,n)}{l_m(\alpha)}\to 1.
\end{split}
\end{equation}

\item Suppose that $\displaystyle{\limsup_{m\to\infty}}  (\ser_{(m)}-\lambda_m)=0$
and there is a fixed $r$ (which does not grow in $m$) such that
$\ser_{(m)}=\cdots= \ser_{(m-r+1)}$ and $\displaystyle{\liminf_{m\to\infty}}  (\ser_{(m-r)}-\ser_{(m-r+1)})>0$.
Furthermore, assume that $\displaystyle{\liminf_{m\to\infty}}  (\lambda^{(r)}_m-\mu_{(m)}) >0$ where
$\lambda^{(r)}_m$ is the unique real root of the equation
$(l^{(r)}_m)'(z)=0$ in $z\in (0, \mu_{(m-r)})$ where
\begin{equation}\label{eq:lmrde}
\begin{split}
	l_m^{(r)}(z):= m\int \frac{dH_m^{(r)}(y)}{y-z}+ \frac{n}{z},
	\qquad H_m^{(r)}:= \frac1{m} \sum_{i=1}^{m-r} \delta_{\mu_{(i)}}.
\end{split}
\end{equation}
Then
\begin{equation}
\begin{split}
	\frac{L(m,n)}{l_m^{(r)}(\ser_{(m)})}\to 1.
\end{split}
\end{equation}
\end{enumerate}
\end{thm}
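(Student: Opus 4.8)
The plan is to combine the exact identity of Theorem~\ref{thm:exact} with a Schur‑complement reduction that isolates the rank‑one part of $W$, and then to invoke the law of large numbers for the largest eigenvalue of a sample covariance matrix together with a deterministic‑equivalent computation that decides exactly when the rank‑one part produces a macroscopic outlier. By Theorem~\ref{thm:exact} it suffices to analyze $\Lambda_{\max}(W)$ with $W = W_0 + P$, where $W_0 := \Sigma^{1/2}GG^*\Sigma^{1/2}$ and $P := \Gamma^{1/2}gg^*\Gamma^{1/2}$; note that $W_0$ and $P$ are independent and $P$ has rank one. A Schur‑complement argument gives, for every $z > \Lambda_{\max}(W_0)$,
$$
\Lambda_{\max}(W) \le z \iff g^*\Gamma^{1/2}(zI - W_0)^{-1}\Gamma^{1/2}g \le 1 .
$$
The left‑hand quantity is continuous and strictly decreasing from $+\infty$ to $0$ on $(\Lambda_{\max}(W_0),\infty)$, so there is a unique $z^\star$ with $g^*\Gamma^{1/2}(z^\star I - W_0)^{-1}\Gamma^{1/2}g = 1$, and $\Lambda_{\max}(W) = z^\star$. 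Thus the problem reduces to (i) locating $\Lambda_{\max}(W_0)$, and (ii) deciding whether $z^\star$ exceeds $\Lambda_{\max}(W_0)$ by a macroscopic amount.

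For (i), $W_0$ is a sample covariance matrix with population $\Sigma=\mathrm{diag}(1/\mu_i)$ built from $n-1$ samples with $m/(n-1)\to\gamma$. Its empirical spectral distribution converges to a Marchenko--Pastur‑type law, and I would check that the companion Stieltjes transform at the right spectral edge equals $-\lambda_m$, so that Silverstein's fixed‑point relation reproduces the edge as $l_m(\lambda_m)$ to leading order; the same computation shows that an $r$‑fold population spike of $\Sigma$ at $1/\mu_{(m)}$ is supercritical precisely under the hypothesis $\liminf_m(\lambda_m^{(r)}-\mu_{(m)})>0$ of part~(c), in which case it produces an outlier located at $l_m^{(r)}(\mu_{(m)})$. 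Invoking the known LLN for the largest eigenvalue of sample covariance matrices (with no spike, and with a supercritical finite‑rank spike, respectively) then gives $\Lambda_{\max}(W_0)/l_m(\lambda_m)\to 1$ when $\liminf_m(\mu_{(m)}-\lambda_m)>0$, and $\Lambda_{\max}(W_0)/l_m^{(r)}(\mu_{(m)})\to 1$ in the setting of part~(c). For (ii), since $g$ is independent of $W_0$ with i.i.d.\ entries, concentration of quadratic forms gives $g^*\Gamma^{1/2}(zI-W_0)^{-1}\Gamma^{1/2}g = \mathrm{tr}\big((zI-W_0)^{-1}\Gamma\big) + o(1)$ uniformly for $z$ at a macroscopic distance above the spectral edge, and the anisotropic Marchenko--Pastur law evaluates this trace. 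A partial‑fraction manipulation then shows that, evaluated at the edge $l_m(\lambda_m)$, the limiting trace equals $\big(\alpha\, l_m(\alpha) - \lambda_m\, l_m(\lambda_m)\big)\big/\big((\alpha-\lambda_m)\,l_m(\lambda_m)\big)$, which is strictly less than $1$ iff $\alpha<\lambda_m$ and strictly greater than $1$ iff $\alpha>\lambda_m$ (because $\lambda_m$ minimizes $l_m$); moreover, when $\alpha>\lambda_m$ the equation $\mathrm{tr}\big((zI-W_0)^{-1}\Gamma\big)=1$ is solved, by the same algebra, at $z = l_m(\alpha)$.

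Assembling the cases: in~(a) both thresholds are subcritical, $\Lambda_{\max}(W_0)\sim l_m(\lambda_m)$, and the edge value of the trace being $<1$ means $z^\star$ lies within $o(l_m(\lambda_m))$ of the edge (the trace would reach $1$ only below the edge, where the deterministic equivalent no longer governs the quadratic form), so $L(m,n)/l_m(\lambda_m)\to 1$; in~(b) $\limsup_m\lambda_m<\alpha<\liminf_m\mu_{(m)}$ forces $\lambda_m<\alpha<\mu_{(m)}$ for large $m$, hence no population spike and $\Lambda_{\max}(W_0)\sim l_m(\lambda_m)$, while the edge value of the trace exceeds $1$, so $z^\star$ sits macroscopically above the edge at the root $l_m(\alpha)$ of the trace equation, giving $L(m,n)/l_m(\alpha)\to 1$; in~(c) the hypotheses $\alpha<\liminf_m\mu_{(m)}$ and $\mu_{(m)}-\lambda_m\to0$ force $\alpha<\lambda_m$ for large $m$, so there is no rank‑one outlier and $\Lambda_{\max}(W_0)\sim l_m^{(r)}(\mu_{(m)})$, giving $L(m,n)/l_m^{(r)}(\mu_{(m)})\to 1$. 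I expect the main obstacle to be the uniformity required in the second paragraph: the population $\Sigma$ and the perturbation covariance $\Gamma$ both vary with $m$ (only $H_m\to H$ weakly), the spike in~(c) may approach its critical value, and the largest‑eigenvalue LLN and the deterministic‑equivalent estimate for $\mathrm{tr}((zI-W_0)^{-1}\Gamma)$ must be made to hold uniformly enough to conclude; this calls either for a subsequence/compactness reduction to fixed limiting data or for quantitative rigidity‑type bounds, together with the partial‑fraction identities matching the abstract edge and outlier formulas to $l_m$ and $l_m^{(r)}$. Alternatively, one can bypass this machinery and perform a steepest‑descent analysis of the explicit Borodin--P\'ech\'e Fredholm‑determinant formula for $\mathbb{P}(\Lambda_{\max}(W)\le z)$, in which $l_m$ appears as the derivative of the saddle action and the transitions in~(b) and~(c) arise from the saddle $\lambda_m$ being pinched against the poles of the integrand at $z=\alpha$ and $z=\mu_{(m)}$ respectively.
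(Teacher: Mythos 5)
Your proposal takes a genuinely different route from the paper. The paper proves both Theorem~\ref{th:main} and Theorem~\ref{th:main2} by starting from the Fredholm‑determinant formula of Proposition~\ref{prop:FredDet} for the cdf of $L(m,n)$ and performing a steepest‑descent analysis of the double contour integral~\eqref{eq:K1-1}; the function $l_m$ appears there as the derivative of the saddle action $\FF_m$, and the transitions in cases (b) and (c) arise because the saddle must be moved to the pole at $w=\alpha$ (respectively $z=\mu_{(m)}$). You instead work directly on the matrix side of Theorem~\ref{thm:exact}, isolating the rank‑one term $\Gamma^{1/2}gg^*\Gamma^{1/2}$ via a Schur complement, and reduce the problem to (i) locating the spectral edge of $W_0=\Sigma^{1/2}GG^*\Sigma^{1/2}$ by Marchenko–Pastur/spiked‑covariance theory, and (ii) analyzing the scalar equation $g^*\Gamma^{1/2}(zI-W_0)^{-1}\Gamma^{1/2}g=1$ by concentration of quadratic forms and a deterministic equivalent for $\mathrm{tr}((zI-W_0)^{-1}\Gamma)$. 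This is the standard BBP‑type spiked‑perturbation architecture and is conceptually clean: the two phase transitions in the theorem become, respectively, sub/super‑criticality of the rank‑one part and sub/super‑criticality of a population spike of $\Sigma$. You also explicitly flag, in your final sentence, that the Fredholm‑determinant steepest‑descent analysis is an alternative — this alternative is in fact the route the paper takes.

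Two caveats are worth stating clearly. First, several of your key formulas are asserted rather than derived: that the companion‑Stieltjes‑transform fixed point places the (no‑spike or $r$‑spike) edge at $l_m(\lambda_m)$ (respectively $l_m^{(r)}(\mu_{(m)})$), and that the deterministic equivalent of $\mathrm{tr}((zI-W_0)^{-1}\Gamma)$ simplifies, via partial fractions, to the expression you write, with unique outlier root $z=l_m(\alpha)$ when $\alpha>\lambda_m$. These computations are plausible (your threshold comparison is internally consistent with $\lambda_m$ minimizing $l_m$), but they are precisely the technical core of the argument and must be carried through; the paper explicitly notes that the general $\alpha>0$, general‑$\mu_i$ situation does not follow from existing spiked‑covariance results, so you cannot simply cite an off‑the‑shelf BBP theorem. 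Second, the uniformity issue you raise at the end is not cosmetic: the population $\Sigma$, the perturbation covariance $\Gamma$, and the candidate spike at $1/\mu_{(m)}$ all vary with $m$, $H_m\to H$ only weakly, and in case~(c) the spike sits at a vanishing distance from the bulk edge, so a subsequence/compactness reduction alone is delicate and you likely need quantitative (rigidity/local‑law type) estimates to make the concentration and deterministic‑equivalent steps hold uniformly. Absent those inputs your argument is a credible plan rather than a complete proof; the paper's steepest‑descent approach sidesteps these RMT inputs by working directly with the $m$‑dependent function $\FF_m$ and its critical points.
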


The above theorem can be simplified if we assume that the limit of $\mu_{(m)}$ as $m\to\infty$ exists.

\begin{cor}[Easier conditions when $\mu_{(m)}\to \mu_{\inf}$]
% as $m\to \infty$ the slowest service rate converges to a constant.]
\label{rem:con}
Suppose that $\mu_{(m)}\to \mu_{\inf}$ as $m\to \infty$ for some  $\mu_{\inf}>\alpha$.
% (which is necessarily larger than $\alpha$).
%(Recall that $\mu_{(1)}= \mu_{(1)}(m)$ depends on $m$.)
Set
\begin{equation}\label{eq:lz 2}
	l(z) := m \int \frac{dH(y)}{y-z} + \frac{n}{z}, \qquad z\notin supp(H),
\end{equation}
and let $\crit$ be the unique solution to $l'(z)=0$ in $z\in (0, \inf supp H)$;% be the unique solution to the equation
\begin{equation}\label{eq:solvecrit2}
	m\int \frac{dH(y)}{(y-\crit)^2} - \frac{n}{\crit^2} = 0 , \qquad \crit \in (0,  \inf supp(H)).
\end{equation}
(Hence $l(z)$ is the analogue of $l_m(z)$ with $H_m$ replaced by $H$ in ~\eqref{eq:HmtoH},
and $\lambda$ is the analogue of $\lambda_m$. See Figure~\ref{fig:functionlwithH} for an example.)
Then the following asymptotic result holds in probability
as $m,n\to\infty$ such that $m/n\to \gamma \in (0,\infty)$.
%Then we have:
\begin{itemize}
\item[(i)] If $\lambda\in (\alpha, \mu_{\inf})$, then $\frac{L(m,n)}{l(\lambda)}\to 1$.
\item[(ii)] If $\lambda\in (0, \alpha)$, then $\frac{L(m,n)}{l(\alpha)}\to 1$.
\item[(iii)] If $\lambda>\mu_{\inf}$, and if $\mu_{(m)}=\cdots=\mu_{(m-r+1)}$ for some fixed $r$ and $\displaystyle{\liminf_{m\to\infty}} (\mu_{(m-r)}-\mu_{\inf})>0$, then $\frac{L(m,n)}{l(\mu_{\inf})}\to 1$.
\end{itemize}
\end{cor}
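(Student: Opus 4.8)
The plan is to obtain Corollary~\ref{rem:con} from Theorem~\ref{th:main} by checking, case by case, that the hypotheses phrased in terms of the limiting objects $l$ and $\lambda$ (equations~\eqref{eq:lz 2} and~\eqref{eq:solvecrit2}) imply the slightly more technical hypotheses of Theorem~\ref{th:main}, which involve $l_m$ and $\lambda_m$. The single analytic input I would isolate first is the following elementary fact: if $\mu_{(m)}\to\mu_{\inf}$, if $H_m\to H$ weakly, and if the $\mu_i$ are uniformly bounded, then for $z$ ranging over a compact subset of $(0,\mu_{\inf})$ the rescaled functions $\tfrac1n l_m(z)$ and $\tfrac1n l(z)$, together with their first and second $z$-derivatives, converge uniformly to the common limit $\gamma\!\int\!\frac{dH(y)}{y-z}+\frac1z$; this is immediate since $1/(y-z)$ and its $z$-derivatives are uniformly bounded and continuous on $supp(H_m)$ once $z$ is bounded away from $\mu_{\inf}$, and since $\mu_{(m)}\to\mu_{\inf}$ already forces $\inf supp(H)\ge\mu_{\inf}$. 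Two consequences I would record are: (1) because this common limit is strictly convex and strictly positive on $(0,\inf supp(H))$, the roots satisfy $\lambda_m-\lambda\to0$ whenever their common limit $\lambda_\infty$ lies strictly below $\mu_{\inf}$, and $l_m(z_m)/l(w_m)\to1$ whenever $z_m,w_m$ tend to a common point of $(0,\mu_{\inf})$; (2) the same statements hold with $l_m^{(r)},\lambda_m^{(r)},H_m^{(r)}$ in place of $l_m,\lambda_m,H_m$, since $H_m^{(r)}$ differs from $H_m$ by a point mass of weight $r/m\to0$ (hence also converges weakly to $H$) while its left edge $\mu_{(m-r)}$ stays a fixed positive distance from $\mu_{\inf}$ by hypothesis.

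Granting this, cases (i) and (ii) are short. In case (i), $\lambda\in(\alpha,\mu_{\inf})$ gives $\lambda_m\to\lambda$, hence $\liminf_m\lambda_m>\alpha$ and $\liminf_m(\mu_{(m)}-\lambda_m)>0$, so Theorem~\ref{th:main}(a) applies and yields $L(m,n)/l_m(\lambda_m)\to1$; combined with $l_m(\lambda_m)/l(\lambda)\to1$ from consequence (1) this gives the claim. In case (ii), $\lambda\in(0,\alpha)\subset(0,\mu_{\inf})$ again gives $\lambda_m\to\lambda$, so $\limsup_m\lambda_m<\alpha$ and Theorem~\ref{th:main}(b) gives $L(m,n)/l_m(\alpha)\to1$; since $\alpha<\mu_{\inf}\le\inf supp(H)$ the quantity $l(\alpha)$ is defined and $l_m(\alpha)/l(\alpha)\to1$, which finishes this case.

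Case (iii) is where the real work lies. Here $\lambda>\mu_{\inf}$, i.e.\ $\lambda_\infty>\mu_{\inf}$, so the limiting derivative of the rescaled functions is strictly negative on all of $(0,\mu_{\inf})$; hence for each fixed $z<\mu_{\inf}$ one has $l_m'(z)<0$, and therefore $\lambda_m>z$, for all large $m$, which together with $\lambda_m<\mu_{(m)}\to\mu_{\inf}$ forces $\mu_{(m)}-\lambda_m\to0$ --- the first hypothesis of Theorem~\ref{th:main}(c). The degeneracy hypothesis of Theorem~\ref{th:main}(c), namely $\mu_{(m)}=\cdots=\mu_{(m-r+1)}$ with $\liminf_m(\mu_{(m-r)}-\mu_{(m-r+1)})>0$, is exactly the corollary's assumption since $\mu_{(m-r+1)}=\mu_{(m)}\to\mu_{\inf}$. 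The delicate point is the remaining hypothesis $\liminf_m(\lambda_m^{(r)}-\mu_{(m)})>0$: one must show that excising the $r$ bottleneck rates moves the critical root a definite distance into the interior. I would do this by choosing $\varepsilon>0$ smaller than both $\lambda_\infty-\mu_{\inf}$ and $\liminf_m\mu_{(m-r)}-\mu_{\inf}$ (note $\inf supp(H)\ge\liminf_m\mu_{(m-r)}>\mu_{\inf}$ in this case); then $z_0:=\mu_{\inf}+\varepsilon$ lies in the domain $(0,\mu_{(m-r)})$ of $(l_m^{(r)})'$ for all large $m$, lies below $\inf supp(H)$, and the limit of $\tfrac1n(l_m^{(r)})'(z_0)$ equals $\gamma\!\int\!\frac{dH(y)}{(y-z_0)^2}-\frac1{z_0^2}<0$; hence $(l_m^{(r)})'(z_0)<0$ for all large $m$, giving $\lambda_m^{(r)}>\mu_{\inf}+\varepsilon$ eventually and thus $\liminf_m(\lambda_m^{(r)}-\mu_{(m)})\ge\varepsilon>0$. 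Theorem~\ref{th:main}(c) then gives $L(m,n)/l_m^{(r)}(\mu_{(m)})\to1$, and since $\mu_{\inf}\notin supp(H)$ in this case, $l(\mu_{\inf})$ is defined and $l_m^{(r)}(\mu_{(m)})/l(\mu_{\inf})\to1$ by consequence (2), completing the argument.

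I expect the main obstacle to be precisely this last step of case (iii): $\lambda_m^{(r)}$ is a priori constrained only to $(0,\mu_{(m-r)})$, and $\mu_{(m-r)}$ need not converge, so one must argue with $\liminf$'s throughout and keep careful track of the region on which $\tfrac1n(l_m^{(r)})'$ converges to its limit and of the fact that the relevant integrands are uniformly bounded there. The uniform boundedness of the $\mu_i$ and the separation $\liminf_m(\mu_{(m-r)}-\mu_{\inf})>0$ are exactly what close the argument. Everything else is routine weak-convergence bookkeeping, together with the repeatedly-used observation that each of $l_m(\cdot)$, $l_m^{(r)}(\cdot)$, $l(\cdot)$ appearing above is of order $n$ with a strictly positive leading coefficient, so ratios of them converge to the corresponding ratio of coefficients.
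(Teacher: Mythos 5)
Your proof is correct, and it follows the same basic route as the paper: reduce the corollary to Theorem~\ref{th:main} by using weak convergence $H_m\to H$ together with $\mu_{(m)}\to\mu_{\inf}$ to show that the $m$-dependent quantities $\lambda_m$, $\lambda_m^{(r)}$, $l_m$, $l_m^{(r)}$ are asymptotically tracked by their $H$-analogues $\lambda$, $l$, and then check the hypotheses of parts (a), (b), (c) case by case. In cases (i) and (ii) both arguments are essentially identical.

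Where you go further than the paper is in case (iii). The paper establishes $\limsup_m(\mu_{(m)}-\lambda_m)=0$ from the observation that the critical-point equation~\eqref{eq:solvecrit} has no root in any compact subinterval of $(0,\mu_{\inf})$ for large $m$, and then simply asserts that ``the conditions for the case (c) of Theorem~\ref{th:main} are satisfied'' without verifying the remaining hypothesis $\liminf_m(\lambda_m^{(r)}-\mu_{(m)})>0$. You supply a clean proof of this hypothesis: pick $\varepsilon$ smaller than both $\lambda_\infty-\mu_{\inf}$ and $\liminf_m(\mu_{(m-r)}-\mu_{\inf})$, observe that at $z_0=\mu_{\inf}+\varepsilon$ the rescaled derivative $\tfrac1n (l_m^{(r)})'(z_0)$ converges to a strictly negative number (since $z_0<\lambda_\infty$ and the limit function is convex with its minimum at $\lambda_\infty$), so by convexity of $(l_m^{(r)})'$ the root $\lambda_m^{(r)}$ must lie to the right of $z_0$ for all large $m$. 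This is exactly the step the paper leaves implicit, and it is the only place where an argument is genuinely required rather than merely recorded. Two cosmetic remarks: the auxiliary hypothesis ``the $\mu_i$ are uniformly bounded'' that you inserted into your analytic input is unnecessary --- $y\mapsto 1/(y-z)$ is already bounded and continuous on $[z+\delta,\infty)$, so weak convergence alone gives the pointwise limit --- and your symbol $\lambda_\infty$ for the root of $\gamma\!\int\!\frac{dH(y)}{(y-z)^2}=\frac1{z^2}$ should be introduced explicitly before it is used, since the paper's $\lambda$ in~\eqref{eq:solvecrit2} still carries an $m,n$-dependence through the coefficients $m$ and $n$.
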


\begin{proof}
Note that as $H_m\to H$ weakly, $\mu_{(m)}\le \inf supp(H)$, and hence $\mu_{\inf}\le \inf supp(H)$.
If $\crit< \mu_{\inf}$, it is easy to check,
using the fact that $\mu_{\inf}=\displaystyle{\lim_{m\to\infty}} \inf supp(H_m)$
and using the analyticity of $l_m(z)$ for $z\in (0, \inf supp(H_m))$,
that $\crit_m\to \crit$, and also $\frac1{m}(l_m(z)-l(\crit))\to 0$ for each $z\in (0, \mu_{\inf})$.
Hence we are in the case (a) or (b) of Theorem~\ref{th:main} depending on $\alpha<\lambda<\mu_{\inf}$ or $\lambda<\alpha$, and the above results for (i) and (ii) follow.
On the other hand if $\crit>\mu_{\inf}$, then it is also easy to check that
for any compact interval of $(0, \mu_{\inf})$, there is no zero of the equation~\eqref{eq:solvecrit} for all large enough $m$. This means that $\displaystyle{\limsup_{m\to\infty}}  (\mu_{(m)}-\crit_m)\to 0$, and hence we find that the conditions for the case (c) of Theorem~\ref{th:main}  are satisfied.
It is also direct to check that $\frac1{m}(l^{(r)}_m(z)-l(z))\to 0$ for each $z\in (0, \displaystyle{\liminf_{m\to\infty}} \mu_{(m-r)})$ since $l^{(r)}_m(z)$ is analytic in any closed interval in this interval for all large enough $m$.
Thus the case (iii) follows.
\end{proof}

\begin{figure}[htbp]
\centering
\subfigure[]
{  \includegraphics[scale=0.25]{functionl2}
}
\subfigure[]
{  \includegraphics[scale=0.25]{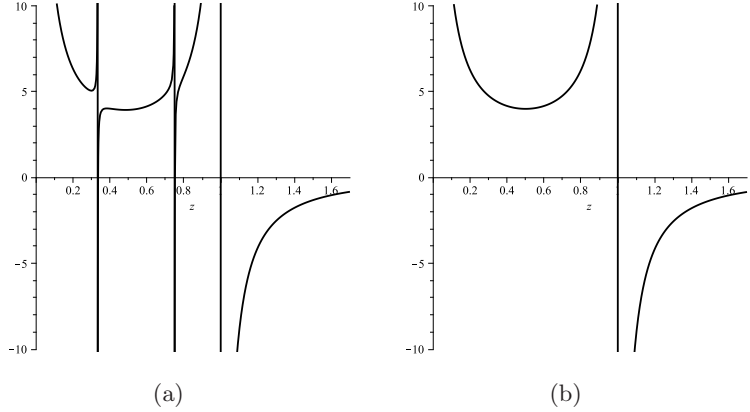}
}
\caption{We assume that $\mu_1=\frac13$, $\mu_2=\frac34$, and $\mu_i=1$ for $i\ge 3$.
Then $H_m= \frac1{m} \delta_{\frac13}+ \frac1{m}\delta_{\frac34}+ \frac{m-2}{m} \delta_{1}$
 and $H= \delta_1$. (a) is the graph of $\frac1{m} l_m(z)$ when $m=n=100$.
(b) is the graph of $\frac1{m} l(z)$ when $m=n=100$. }
\label{fig:functionlwithH}
\end{figure}

\subsection{Asymptotic result II. Second order}\label{sec:asy2}

The next theorem is about the second order asymptotics. We evaluate the law of the asymptotic fluctuations.
Let $\TWC$ denote the complex Tracy-Widom random variable from random matrix theory \cite{Tracy-Widom94}. There are other Tracy-Widom random variables, ${\sf TW}_\beta$, and the subscript $2$ in $\TWC$ signifies that
this random variable is related to the so-called complex case in random matrix theory. See~\eqref{eq:assm4} below for an explicit formula of $\TWC$ and Section \ref{sec:tw2} for additional information.
%The standard  normal random variable is denoted by $\mathcal{N}(0,1)$.
The notation $\overset{\mathcal{D}}{\longrightarrow}$ means convergence in distribution.

\begin{thm}[Asymptotic result II. Second order]\label{th:main2}
With the same notations and assumptions in Theorem~\ref{th:main},
we have the following asymptotic result.
\begin{enumerate}[(a)]
\item If $\alpha< \displaystyle{\liminf_{m\to\infty}} \crit_m$ and $\displaystyle{\liminf_{m\to\infty}} (\ser_{(m)}-\lambda_m)>0$, then
\begin{equation}\label{eq:fluc1}
\begin{split}
	\frac{L(m,n)-l_m(\crit_m)}{(l''_m(\crit_m)/2)^{1/3}} \overset{\mathcal{D}}{\longrightarrow} \TWC.
\end{split}
\end{equation}

\item If $\displaystyle{\limsup_{m\to\infty}} \crit_m<\alpha$, then
\begin{equation}\label{eq:fluc2}
\begin{split}
	\frac{L(m,n)-l_m(\alpha)}{(l'_m(\alpha))^{1/2}} \overset{\mathcal{D}}{\longrightarrow}  \mathcal{N}(0,1).
\end{split}
\end{equation}

\item Suppose that $\displaystyle{\limsup_{m\to\infty}}  (\ser_{(m)}-\lambda_m)=0$ and
that $\displaystyle{\liminf_{m\to\infty}}  (\ser_{(m-1)}-\ser_{(m)})>0$.
Furthermore, assume that $\displaystyle{\liminf_{m\to\infty}}  (\lambda^{(1)}_m-\mu_{(m)}) >0$ where
$\lambda^{(r)}_m$ and $l_m^{(r)}$(z) are defined in Theorem~\ref{th:main} (c).
Then
\begin{equation}\label{eq:fluc3}
\begin{split}
	\frac{L(m,n)-l_m^{(1)}(\mu_{(m)})}{(-(l_m^{(1)})'(\mu_{(m)}))^{1/2}} \overset{\mathcal{D}}{\longrightarrow}  \mathcal{N}(0,1).
\end{split}
\end{equation}

\end{enumerate}
\end{thm}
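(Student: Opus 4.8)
The plan is to deduce all three parts from the exact identity of Theorem~\ref{thm:exact} together with the determinantal structure that underlies it, extracting the second-order terms from the same steepest-descent analysis that (at leading order) yields Theorem~\ref{th:main}. By Theorem~\ref{thm:exact} it suffices to study $\Lambda_{\max}(W)$. The matrix $W$ is a finite-rank deformation of a solvable random matrix model: $\Sigma^{1/2}GG^{*}\Sigma^{1/2}$ is a complex sample covariance matrix whose population spectrum is the push-forward of $H_m$ under $y\mapsto 1/y$, so the slowest rate $\ser_{(m)}$ (the largest entry of $\Sigma$) is a potential population spike; and $\Gamma^{1/2}gg^{*}\Gamma^{1/2}=vv^{*}$ with $v\sim\mathcal{CN}(0,\Gamma)$ is a rank-one additive spike whose strength is set by $\alpha$, where crucially $\Gamma=\Sigma(I-\alpha\Sigma)^{-1}$ (this identity is the structural reason the arrival-rate threshold and the slow-server threshold turn out to coincide). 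For this model --- equivalently, via the directed last passage percolation picture used to prove Theorem~\ref{thm:exact}, the exponential DLPP with inhomogeneous row rates $\ser_i$ and a single boundary source of parameter $\alpha$ --- the Borodin--P\'ech\'e formula \cite{Borodin-Peche08a} gives an exact Fredholm determinant
\begin{equation*}
	\mathbb{P}\bigl(L(m,n)\le t\bigr)=\det\bigl(I-\mathcal{K}_{t}\bigr)_{L^{2}(t,\infty)},
\end{equation*}
whose kernel is a double contour integral built from a phase $e^{\Theta_{t}(w)-\Theta_{t}(z)}$, a rational boundary factor $\tfrac{w-\alpha}{z-\alpha}$ encoding the source, and a rational factor attached to each row rate; here the phase is normalized so that $\Theta_{t}'(z)=l_{m}(z)-t$, i.e. $\Theta_{t}$ is an antiderivative of $l_m(\cdot)-t$.

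The first step is the global steepest-descent setup: choose contours for $z$ and $w$ on which $\mathrm{Re}\,(\Theta_{t}(z)-\Theta_{t}(w))$ has the correct sign away from a single critical point. Since $\Theta_{t}'=l_{m}(\cdot)-t$, $\Theta_{t}''=l_{m}'$ and $\Theta_{t}'''=l_{m}''>0$ on $(0,\ser_{(m)})$, at $t=l_{m}(\crit_{m})$ the point $\crit_{m}$ of~\eqref{eq:solvecrit} is a \emph{double} critical point of $\Theta_{t}$; for $t$ near $l_m(\crit_m)$ the local picture is the Airy cubic $\tfrac16 l_m''(\crit_m)(z-\crit_m)^3-(t-l_m(\crit_m))(z-\crit_m)$. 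Whether the boundary pole at $z=\alpha$, or the pole attached to $\ser_{(m)}$, or neither, has to be crossed when deforming the $z$-contour through $\crit_{m}$ is exactly the trichotomy $\alpha<\crit_{m}$ / $\alpha>\crit_{m}$ / $\crit_{m}\uparrow\ser_{(m)}$, i.e. the cases (a)/(b)/(c); on the matrix side this is the criterion for whether the corresponding spike is subcritical or supercritical.

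In case (a) both boundary factors are inert and one is in the classical soft-edge regime: rescaling $t=l_{m}(\crit_{m})+(l_{m}''(\crit_{m})/2)^{1/3}\xi$ together with the spatial variables on the conjugate scale matches the Airy cubic exactly --- this is precisely what forces the constant $(l_{m}''(\crit_{m})/2)^{1/3}$ --- and the local contour integrals converge to the Airy kernel, so $\det(I-\mathcal{K}_{t})\to F_{\TWC}(\xi)$, which is~\eqref{eq:fluc1}. In case (b) the $z$-contour must be dragged past $\crit_{m}$, crossing the simple pole at $z=\alpha$; the residue dominates and localizes the integral near $z=\alpha$, where $\Theta_{t}$ is non-degenerate with $\Theta_t'(\alpha)=l_m(\alpha)-t$ and $\Theta_t''(\alpha)=l_m'(\alpha)>0$, while the remaining ``subcritical edge'' part of the determinant tends to $1$; rescaling $t=l_{m}(\alpha)+(l_{m}'(\alpha))^{1/2}\xi$ turns the residue contribution into the $\mathcal{N}(0,1)$ distribution function, giving~\eqref{eq:fluc2} (this is the spiked-model outlier CLT, whose variance collapses to $l_m'(\alpha)$ thanks to the identity $\Gamma=\Sigma(I-\alpha\Sigma)^{-1}$). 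Case (c) is structurally identical to (b), with the role of the $\alpha$-pole played by the pole attached to the slowest rate; the hypothesis $\liminf(\ser_{(m-1)}-\ser_{(m)})>0$ ensures this pole is \emph{simple}, so the outlier fluctuation is a scalar Gaussian rather than a GUE-type object; removing that one row replaces $H_m,l_m$ by $H_m^{(1)},l_m^{(1)}$, the hypothesis $\liminf(\lambda^{(1)}_m-\ser_{(m)})>0$ makes that spike strictly supercritical, and the same residue computation yields centering $l_{m}^{(1)}(\ser_{(m)})$ and Gaussian scale $(-(l_{m}^{(1)})'(\ser_{(m)}))^{1/2}$ --- the sign being correct because $\ser_{(m)}<\lambda^{(1)}_m$ forces $(l_m^{(1)})'(\ser_{(m)})<0$. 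This is~\eqref{eq:fluc3}.

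The main obstacle is uniformity in $m$. The number of poles $\ser_i$ grows and $H_m$ is assumed to converge only weakly, with no rate and no regularity of the limit, so one must show that the global steepest-descent contours --- with the correct sign of $\mathrm{Re}\,\Theta_t$ --- can be constructed using \emph{only} the quantitative gaps in the hypotheses ($\crit_m$ bounded away from $0$ and from $\ser_{(m)}$ in (a), $\alpha$ bounded away from $\crit_m$ in (b), and the three separations in (c)), and that the non-local parts of the contours are exponentially negligible. A second, more bookkeeping, obstacle is tracking constants through the chain of changes of variables so that the limiting scales come out \emph{exactly} as $(l_m''(\crit_m)/2)^{1/3}$, $(l_m'(\alpha))^{1/2}$ and $(-(l_m^{(1)})'(\ser_{(m)}))^{1/2}$ --- in (b) and (c) this relies on an algebraic identity relating $\Gamma$, $\Sigma$ and the resolvent of $GG^{*}$ --- and checking that in (b), (c) the subcritical edge contribution (a $\TWC$ fluctuation of size $\sim n^{1/3}$) is genuinely swamped by the $\sim n^{1/2}$-size outlier, so the Gaussian limits are clean.
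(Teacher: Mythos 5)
Your plan tracks the paper's own proof essentially step for step: both start from the Borodin--P\'ech\'e Fredholm determinant with a double-contour kernel, do steepest descent through the double critical point $\lambda_m$ (with $\Theta_t'=l_m-t$, i.e. the paper's $m\FF_m$) to get the Airy kernel in case (a), and for cases (b) and (c) pull out the residue at the relevant pole ($\alpha$, respectively $\mu_{(m)}$) to obtain a rank-one Gaussian kernel while the remaining double integral is bounded exponentially small. The only stray remark is that the Gaussian scales $(l_m'(\alpha))^{1/2}$ and $(-(l_m^{(1)})'(\mu_{(m)}))^{1/2}$ come straight from the second derivative of the phase at the simple critical point in the local steepest-descent expansion --- no resolvent identity relating $\Gamma$, $\Sigma$ and $GG^*$ is used there; the relation $\Gamma=\Sigma(I-\alpha\Sigma)^{-1}$ explains the coincidence of the two thresholds but does not enter the variance computation.
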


Note that the denominator in~\eqref{eq:fluc1} is $O(m^{1/3})$ while the denominators in~\eqref{eq:fluc2} and~\eqref{eq:fluc3} are $O(m^{1/2})$.
%Note that $(l''_m(\crit_m)/2)^{1/3}= O(m^{1/3})$ while $(l'_m(\alpha))^{1/2}=O(m^{1/2})$ and $(-(l^{(1)}_m)'(\mu_{(m)}))^{1/2}=O(m^{1/2})$.
Hence the order of the fluctuations in the case (a) is different from the cases (b) and (c).

\begin{rmk}\label{rmk:fluctuationc}
In the case (c), if
$\ser_{(m)}=\cdots= \ser_{(m-r+1)}$ and $\displaystyle{\liminf_{m\to\infty}}  (\ser_{(m-r)}-\ser_{(m-r+1)})>0$
for a fixed $r$,
then we have the same scaling but a different limiting distribution.
The new distribution is same as the largest eigenvalue of the so-called $r\times r$ matrix from the
Gaussian unitary ensemble \cite{Mehta04}.
\end{rmk}

The above theorem can be simplified as follows under some extra conditions.
The proof of this Corollary is similar to the argument in the proof of Corollary~\ref{rem:con23} and we skip it.

\begin{cor}[Easier conditions when $\mu_{(m)}\to \mu_{\inf}$]
% as $m\to \infty$ the slowest service rate converges to a constant.]
\label{rem:con23}
Suppose that
 $\mu_{(m)}\to \mu_{\inf}$ as $m\to \infty$ for some value $\mu_{\inf}$ as in Corollary~\ref{rem:con}.
Furthermore, assume that
\begin{equation}\label{eq:seccon1}
	l_m(z) -l(z) = o(m^{1/3}) \text{ for any compact interval of $z\in (0, \mu_{(m)})$}
\end{equation}
and
\begin{equation}\label{eq:seccon1}
	l_m^{(1)}(z) -l(z) = o(m^{1/2}) \text{ for any compact interval of $z\in (0, \mu_{(m-1)})$}
\end{equation}
where $l(z)$ is defined in~\eqref{eq:lz 2}.
Let $\lambda$ be defined in~\eqref{eq:solvecrit2}.
Then we have:
\begin{itemize}
\item[(i)] If $\lambda\in (\alpha, \mu_{\inf})$, then
\begin{equation}
	\frac{L(m,n)-l(\crit)}{(l''(\crit)/2)^{1/3}} \overset{\mathcal{D}}{\longrightarrow} \TWC.
\end{equation}
\item[(ii)] If $\lambda\in (0, \alpha)$, then
\begin{equation}
\begin{split}
	\frac{L(m,n)-l(\alpha)}{(l'(\alpha))^{1/2}} \overset{\mathcal{D}}{\longrightarrow}  \mathcal{N}(0,1).
\end{split}
\end{equation}

\item[(iii)] If $\lambda>\mu_{\inf}$ and $\displaystyle{\liminf_{m\to\infty}}  (\mu_{(m-1)}-\mu_{\inf}) >0$,
then
\begin{equation}
\begin{split}
	\frac{L(m,n)-l(\mu_{\inf})}{(-l'(\mu_{\inf}))^{1/2}} \overset{\mathcal{D}}{\longrightarrow}  \mathcal{N}(0,1).
\end{split}
\end{equation}

\end{itemize}
\end{cor}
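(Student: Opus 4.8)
The plan is to reduce the three cases of this corollary directly to the corresponding three cases of Theorem~\ref{th:main2}, exactly as was done at the leading-order level in the proof of Corollary~\ref{rem:con}, and then to show that the extra assumptions \eqref{eq:seccon1} let us replace the $m$-dependent centering and scaling constants $l_m(\lambda_m)$, $l_m''(\lambda_m)$, $l_m'(\alpha)$, $l_m^{(1)}(\mu_{(m)})$, $(l_m^{(1)})'(\mu_{(m)})$ appearing in \eqref{eq:fluc1}--\eqref{eq:fluc3} by their limiting counterparts $l(\lambda)$, $l''(\lambda)$, $l'(\alpha)$, $l(\mu_{\inf})$, $l'(\mu_{\inf})$ built from the limiting measure $H$. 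The first task is the convergence of the critical points: as in Corollary~\ref{rem:con}, when $\lambda<\mu_{\inf}$ the equation \eqref{eq:solvecrit} converges, coefficientwise in a neighborhood of $\lambda$, to \eqref{eq:solvecrit2} because $H_m\to H$ weakly and the support points stay bounded away from the relevant interval, so $\lambda_m\to\lambda$; when $\lambda>\mu_{\inf}$ one instead checks, as in that proof, that \eqref{eq:solvecrit} has no root in any compact subinterval of $(0,\mu_{\inf})$ for large $m$, placing us in case (c), and that the modified critical point $\lambda_m^{(1)}$ from \eqref{eq:lmrde} converges to the root $\lambda^{(1)}$ of the limiting modified equation, which by hypothesis still exceeds $\mu_{\inf}$.

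Next I would deal with the centering constants. In case (i), one writes
\begin{equation*}
  L(m,n)-l(\lambda) = \bigl(L(m,n)-l_m(\lambda_m)\bigr) + \bigl(l_m(\lambda_m)-l(\lambda)\bigr),
\end{equation*}
and the second term must be $o\bigl((l_m''(\lambda_m)/2)^{1/3}\bigr) = o(m^{1/3})$ for the Slutsky-type substitution to go through. Since $\lambda_m\to\lambda$ with both inside a fixed compact subinterval of $(0,\mu_{\inf})$, on which $l_m$ and $l$ are analytic with uniformly bounded derivatives, one has $l_m(\lambda_m)-l(\lambda) = \bigl(l_m(\lambda_m)-l(\lambda_m)\bigr) + \bigl(l(\lambda_m)-l(\lambda)\bigr)$; the first bracket is $o(m^{1/3})$ precisely by \eqref{eq:seccon1}, and the second is $o(1)$ by continuity of $l$, so the sum is $o(m^{1/3})$ as needed. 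For the denominator one needs $l_m''(\lambda_m)\to l''(\lambda)$; this follows from $\lambda_m\to\lambda$ together with a Cauchy-integral argument giving $l_m''\to l''$ locally uniformly (weak convergence of $H_m$ plus analyticity off the supports upgrades to uniform convergence of all derivatives on compact subsets of the common analyticity domain), and the ratio $(l_m''(\lambda_m)/2)^{1/3}/(l''(\lambda)/2)^{1/3}\to 1$. Cases (ii) and (iii) are handled identically, now with the weaker requirement $o(m^{1/2})$ for the centering error, which is exactly the tolerance in the second line of \eqref{eq:seccon1} (and, in case (ii), the even cruder fact that $l_m(\alpha)-l(\alpha)=o(m^{1/2})$, which is implied by the first line of \eqref{eq:seccon1} since $\alpha$ lies in a compact subinterval of $(0,\mu_{(m)})$ for large $m$); here one needs $l_m'(\alpha)\to l'(\alpha)$ and $(l_m^{(1)})'(\mu_{(m)})\to l'(\mu_{\inf})$, the latter combining $\mu_{(m)}\to\mu_{\inf}$ with the local uniform convergence $l_m^{(1)\prime}\to l'$.

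Finally, in each case one invokes the relevant part of Theorem~\ref{th:main2}: in (i) the hypothesis $\lambda\in(\alpha,\mu_{\inf})$ with $\lambda_m\to\lambda$ gives $\alpha<\liminf\lambda_m$ and $\liminf(\mu_{(m)}-\lambda_m)=\mu_{\inf}-\lambda>0$, so \eqref{eq:fluc1} applies; in (ii), $\lambda<\alpha$ gives $\limsup\lambda_m<\alpha$, so \eqref{eq:fluc2} applies; in (iii), the absence of a critical point in $(0,\mu_{\inf})$ forces $\limsup(\mu_{(m)}-\lambda_m)=0$, the hypothesis $\liminf(\mu_{(m-1)}-\mu_{\inf})>0$ gives $\liminf(\mu_{(m-1)}-\mu_{(m)})>0$, and $\liminf(\lambda_m^{(1)}-\mu_{(m)})=\lambda^{(1)}-\mu_{\inf}>0$, so \eqref{eq:fluc3} applies; and then Slutsky's theorem with the convergences of centering and scaling constants established above yields the stated limits with $l$ in place of $l_m$ and $l_m^{(1)}$. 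I expect the only delicate point to be justifying that weak convergence $H_m\to H$ plus \eqref{eq:seccon1} really do upgrade to the convergence of $l_m'$ and $l_m''$ (and $l_m^{(1)\prime}$) at the moving points $\lambda_m$, $\alpha$, $\mu_{(m)}$ — i.e. controlling derivatives, not just values — but since all these points sit in a fixed compact subset of a common domain of analyticity for all large $m$, a routine Cauchy-estimate argument closes the gap; the rest is bookkeeping with Slutsky's theorem.
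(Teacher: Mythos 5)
The paper itself does not prove this corollary (it states that the argument is analogous to that of Corollary~\ref{rem:con} and skips it), so there is no paper proof to compare against word-for-word; your overall strategy — reduce to Theorem~\ref{th:main2} via the convergence of critical points established in Corollary~\ref{rem:con} and then do a Slutsky-type substitution of centering and scaling constants — is exactly the intended approach. However, two of your steps do not hold up as written.

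First, in case (i) you claim $l(\lambda_m)-l(\lambda)=o(1)$ ``by continuity of $l$.'' This is not correct reasoning: $l(z)$ as defined in~\eqref{eq:lz 2} carries a factor of $m$ (its derivatives are all $\Theta(m)$ on compacts), so continuity together with $\lambda_m\to\lambda$ only yields $o(m)$, which is far too weak. What actually controls this term is \emph{criticality}: since $l'(\lambda)=0$, one has $l(\lambda_m)-l(\lambda)=\tfrac12 l''(\lambda)(\lambda_m-\lambda)^2(1+o(1))$, and the hypothesis $l_m-l=o(m^{1/3})$ plus a Cauchy estimate gives $l_m'-l'=o(m^{1/3})$, whence $\lambda_m-\lambda=-l_m'(\lambda)/l_m''(\lambda)(1+o(1))=o(m^{1/3})/\Theta(m)=o(m^{-2/3})$, and then $l(\lambda_m)-l(\lambda)=\Theta(m)\cdot o(m^{-4/3})=o(m^{-1/3})=o(m^{1/3})$. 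Your conclusion is right, but the justification needs the stationarity of $l$ at $\lambda$ and an explicit rate for $\lambda_m-\lambda$, not continuity.

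Second, in case (iii) the centering substitution does not go through as stated, and you do not notice the obstruction when you write that cases (ii) and (iii) ``are handled identically.'' Unlike $\lambda$, the point $\mu_{\inf}$ is \emph{not} a critical point of $l$: since $\lambda>\mu_{\inf}$, $l'(\mu_{\inf})<0$. Splitting $l_m^{(1)}(\mu_{(m)})-l(\mu_{\inf})$ as $\bigl[l_m^{(1)}(\mu_{(m)})-l(\mu_{(m)})\bigr]+\bigl[l(\mu_{(m)})-l(\mu_{\inf})\bigr]$, the first bracket is $o(m^{1/2})$ by hypothesis, but the second bracket is $\approx l'(\mu_{\inf})(\mu_{(m)}-\mu_{\inf})=\Theta(m)\cdot(\mu_{(m)}-\mu_{\inf})$, and for this to be $o(m^{1/2})$ one needs $\mu_{(m)}-\mu_{\inf}=o(m^{-1/2})$. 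The stated hypotheses only give $\mu_{(m)}\to\mu_{\inf}$ with no rate, and the hypothesis on $l_m^{(1)}-l$ does not involve $\mu_{(m)}$ at all, so it cannot deliver such a rate. (In the paper's motivating examples $\mu_{(m)}$ is constant, so the gap is silent there, but a general proof of case (iii) must either assume the rate or remark that one is needed.) This is a genuine missing ingredient in your argument for case (iii).
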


%Theorems~\ref{th:main} and~\ref{th:main2} are proved in Section \ref{sec:proofs}.
%As mentioned before, the proof is based on a connection between  latency analysis and
%directed last passage percolation (DLPP) models. This connection is described in the next section.

\subsection{Related results from random matrix theory}

Phase transitions have also been studied in random matrix theory, especially for the so-called `spiked random matrix' models.  These models are of particular interest for principal component analysis in statistics \cite{Johnstone} and in this context the phase transition was first obtained in \cite{Baik-Ben_Arous-Peche05}.
By Theorem~\ref{thm:exact}, such results in random matrix theory can be interpreted as results for tandem queues.
The following cases have been studied:
\begin{enumerate}[(1)]
\item $\alpha=0$ and $\mu_1=\cdots= \mu_m$ in \cite{Johansson00},
\item $\alpha=0$ and $\mu_{r+1}=\cdots =\mu_m$ for a fixed number $r$ in \cite{Baik-Ben_Arous-Peche05,Baik-Silverstein06,Paul07,Benaych_Georges-Guionnet-Maida11,Benaych_Georges-Nadakuditi11},
\item $\mu_1-\alpha\to 0$,  $\mu_{r+1}=\cdots= \mu_m$ for a fixed number $r$
and $\mu_2, \cdots, \mu_r$ converge to a certain critical value in \cite{Ferrari-Spohn06,Borodin-Peche08a}.
\end{enumerate}
From Theorem~\ref{thm:exact}, one can see that the case when $\mu_1=\cdots= \mu_m$ and $\alpha>0$ is almost the same as the case when $\alpha=0$ and only $\mu_1$ is different from other service rates. Thus, case (2) above also includes the case when
\begin{enumerate}
\item[(4)] $\alpha>0$ and $\mu_1=\cdots= \mu_m$.
\end{enumerate}
For the above case, both the leading order and the second order asymptotics are known.
For general $\mu_i$'s, the following cases have been studied:
\begin{enumerate}
\item[(5)] the leading order term when $\alpha=0$ and $\mu_i$ are general in \cite{Nadakuditi-Silverstein10},
\item[(6)] the second order term when $\alpha=0$ and $\mu_i$ are general but satisfy the so-called sub-criticality condition, which yields a convergence to $\TWC$, in \cite{El_Karoui07}.
\end{enumerate}
%The case (6) corresponds to the case (a) with $\alpha=0$ in the above theorems.
%The other cases (b) and (c) are new in this paper.
%Moreover, in this paper we consider the more general setting where  $\alpha > 0$ and $\mu_{1}, \ldots, \mu_{m}$ are arbitrary. This general case does not follow from existing results in the literature.

In this paper we obtain the asymptotic results for the general setting where  $\alpha > 0$ and $\mu_{1}, \ldots, \mu_{m}$ are arbitrary. This general case does not follow from existing results in the literature.

\section{Examples and discussions}\label{sec:examples}

\begin{example}[Equal  service rates, initially empty queues]\label{ex:null0}
Suppose $\ser_1=\cdots=\ser_m=:\mu$ for a fixed $\mu$ and  $\alpha=0$.
Clearly Corollary~\ref{rem:con} applies with $\mu_{\inf}=\mu$, $H=\delta_{\mu}$,
and $l(z)= \frac{m}{\mu-z}+ \frac{n}{z}$.
(We also have $H_m=\delta_{\mu}$.)
The equation~\eqref{eq:solvecrit2} becomes
$\frac{m}{(\mu-\crit)^2}-\frac{n}{\crit^2} =0$ for $\crit\in (0, \mu)$.
Solving this algebraic equation we obtain
\begin{equation}\label{eq:equal1}
\begin{split}
	\crit= \frac{\sqrt{n}}{\sqrt{m}+\sqrt{n}} \mu.
\end{split}
\end{equation}
Since $\lambda< \mu=\mu_{\inf}$, case (iii) does not occur.
Case (ii) also does not occur since $\alpha=0$.
Therefore from case (i) we find that
\begin{equation}
	L(m,n) \approx
	\frac{(\sqrt{m}+\sqrt{n})^2}{\mu}
\end{equation}
as $m,n\to \infty$ such that $m/n\to \gamma\in (0,\infty)$.
\end{example}

\begin{example}[Equal  service rates, queues in equilibrium]\label{ex:null}
The only change from the previous example is that $\alpha>0$. Hence case (ii) may occur and we find that
\begin{equation}\label{eq:example2}
\begin{split}
	L(m,n) \approx \begin{cases}
	\frac{(\sqrt{m}+\sqrt{n})^2}{\mu} \qquad\quad &\text{if $\alpha< \frac{\sqrt{n}}{\sqrt{m}+\sqrt{n}} \mu$,} \\
	\frac{m}{\mu-\alpha}+\frac{n}{\alpha}  \qquad\quad &\text{if $\alpha> \frac{\sqrt{n}}{\sqrt{m}+\sqrt{n}} \mu$.}
	\end{cases}
\end{split}
\end{equation}
\end{example}

Example~\ref{ex:null}  shows a phase transition phenomenon.
When $\alpha$ is below the threshold $\frac{\sqrt{n}}{\sqrt{m}+\sqrt{n}} \mu$, the latency $L(m,n)$ is same as that of the system with initially empty queues. The existing customers do not contribute to the leading order asymptotic of $L(m,n)$. (Actually they do not contributed even to the second leading order asymptotic; see Corollary~\ref{rem:con23} and Example~\ref{ex:spikedfluc}.)
On the other hand, if $\alpha$ is above the threshold, then we see the effect of the existing customers.
Note that the first customer in the batch of $n$ customers sees a geometrically distributed random number of customers in each queue since the queues are in equilibrium.
This results that the time for this customer stays in each queue is exponential of rate $\mu-\alpha$.
Hence the expectation of the time it takes this customer exit from all the queues is $\frac{m}{\mu-\alpha}$.
The asymptotic formula $\frac{m}{\mu-\alpha}+\frac{n}{\alpha}$ in~\eqref{eq:example2}  is larger than this number.
This is natural since $n$ is same order as $m$ and therefore the other $n-1$ customers contribute to the leading order asymptotic of $L(m,n)$.
%It turned out that the combined effect of the existing customers and the newly arrived customers yield $\frac{m}{\mu-\alpha}+\frac{n}{\alpha}$.
See Section~\ref{sec:intuition} for an intuitive reason for the value of the critical threshold for $\alpha$
and the asymptotic latency $\frac{m}{\mu-\alpha}+\frac{n}{\alpha}$.

In the next examples, we assume that $\alpha\ge 0$.

\begin{example}[Quantized service times]\label{ex:quant}
Let $f(y)$ be a piecewise-continuous, non-negative and compactly support function with total mass $1$
Suppose that $\mu_i= F^{-1}(\frac{i}{m})$ for $F(x):= \int_{-\infty}^xf(y) dy$.
Then $\mu_{(m)}\to \mu_{\inf}= \sup\{x: F(x)=0\}$ and $dH(x)=f(x) dx$.
Thus Corollary~\ref{rem:con} applies and case (iii) does not occur.
Let $\crit$ be the unique solution to the equation
\begin{equation}
	m\int \frac{f(y)}{(y-\crit)^2}dy - \frac{n}{\crit^2} = 0 , \qquad \crit \in (0,  \mu_{\inf}).
\end{equation}
Then
\begin{equation}
\begin{split}
	L(m,n) \approx \begin{cases}
	m\int \frac{f(y)}{y-\crit}dy + \frac{n}{\crit}  \qquad\quad &\text{if $\alpha< \crit$,} \\
	m\int \frac{f(y)}{y-\alpha}dy + \frac{n}{\alpha}  \qquad\quad &\text{if $\alpha> \crit$.}
	\end{cases}
\end{split}
\end{equation}
\end{example}

\begin{example}[One slow server]\label{ex:spiked1}
Suppose that all service rates are equal except for one server whose rate is smaller than the rest.
%We consider the case when  one of the rate is smaller than the rest which are all equal.
From Remark~\ref{rmk:ordering}, we may assume that the slow server is  the first server  without loss of generality.
Set $\ser_1=\mu'$ and $\ser_2=\cdots=\ser_m=:\mu$ where $\mu, \mu'$ are fixed numbers such that
$\alpha<\mu'< \mu$.
Then as $\mu_{(m)}=\mu'$ is a constant, Corollary~\ref{rem:con} applies with $\mu_{\inf}=\mu'$.
We have $H_m= \frac1{m} \delta_{\mu'}+(1-\frac1{m})\delta_{\mu}\to H=\delta_{\mu}$.
This is same $H$ as in the Example~\ref{ex:null0}.
Thus we find that $\crit= \frac{\sqrt{n}}{\sqrt{m}+\sqrt{n}} \mu$ as before.
The difference is that since $\mu_{\inf}=\mu'$ is different from $\mu$, the case (iii) can occur.
We obtain
\begin{equation}\label{eq:oneslowexa}
\begin{split}
	L(m,n) \approx \begin{cases}
	\frac{(\sqrt{m}+\sqrt{n})^2}{\mu} \qquad\quad &\text{if $\alpha< \frac{\sqrt{n}}{\sqrt{m}+\sqrt{n}} \mu< \mu'$,} \\
	\frac{m}{\mu-\alpha}+\frac{n}{\alpha}  \qquad\quad &\text{if $ \frac{\sqrt{n}}{\sqrt{m}+\sqrt{n}} \mu<\alpha$,}\\
	%<\mu'$,}\\
	\frac{m}{\mu-\mu'}+ \frac{n}{\mu'}  \qquad\quad &\text{if $%\alpha<
	\mu'< \frac{\sqrt{n}}{\sqrt{m}+\sqrt{n}} \mu$.}
	\end{cases}
\end{split}
\end{equation}
{Figure \ref{fig:simulation} numerically validates the theoretical predictions.}
\end{example}

\begin{figure}
\centering
\includegraphics[width=3.6in,trim = 260 20 250 0, clip = true]{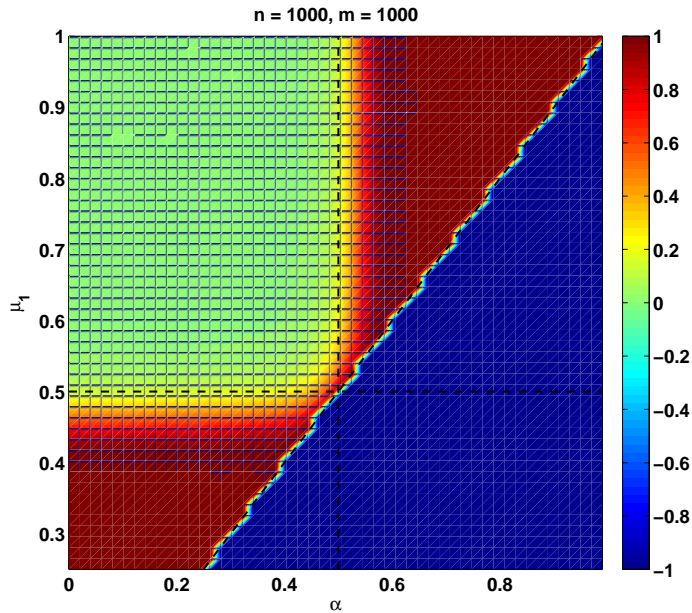}
\caption{A heat-map of the Kolmogorov-Smirnov (KS) distance between the empirical cdf's of $1000$ realizations of the $L(m,n)$ statistic of two systems both of which have $n  = m = 1000$ and $m-1$ servers with rates $\mu_{2} = \ldots= \mu_{m}$. The baseline system  has $\mu_{1} = 1$ and the external arrival rate $\alpha = 0$. The active system has $\mu_{1}$ and $\alpha$ that are varied in the range specified by the plot. Using the KS distance we can assess if the underlying probability distributions for the $L(m,n)$ statistic for the two systems described differ. A value close to $0$ indicates  that the distributions are `near' while a value closer {to} $1$ indicates that they are `far'. Note the phase transitions that separate regimes where the distributions are similar from regimes where the distributions are different; the dashed lines correspond to the predicted value of the phase {transition} (see  Example \ref{ex:spiked1}). The portion of the heat-map where the KS distance equals $-1$ corresponds to the (inadmissible) setting where $\alpha > \mu$.}
\label{fig:simulation}
\end{figure}

\begin{figure}
\centering
\subfigure[Subcritical regime: $n = m = 100$, $\mu_1, \ldots \mu_m =1$, $\alpha = 0.3$]{
\includegraphics[width=5.0in,trim = 75 75 75 50, clip = true]{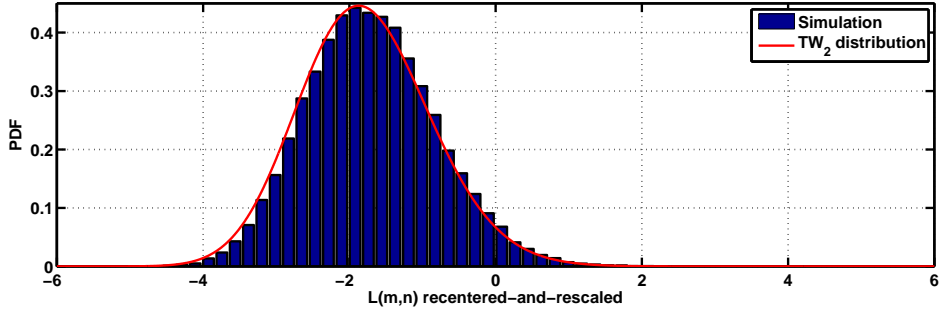}}
 \\
\subfigure[Supercritical regime: $n = m = 100$, $\mu_1 , \ldots, \mu_m = 1$, $\alpha = 0.7$.]{
\includegraphics[width=5.0in,trim = 75 75 75 50, clip = true]{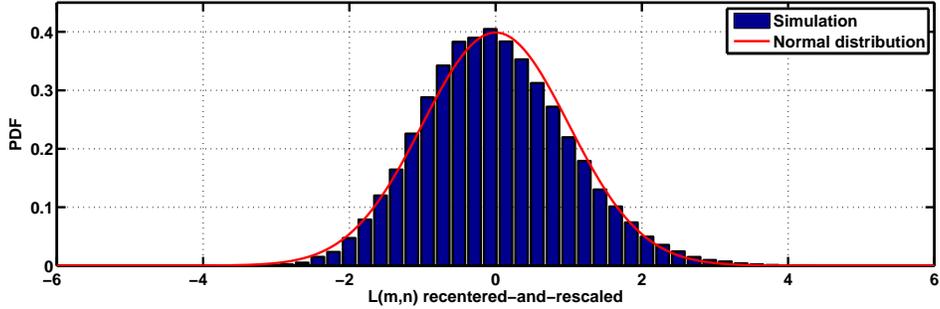}}
\caption{Comparison of the empirical pdf's computed over $50,000$ realizations of the $L(m,n)$ statistic of two systems both of which have $n  = m = 100$ and all $m$ servers with rates $\mu_{1} = \ldots= \mu_{m} =1$. In (a), the external arrival rate $\alpha = 0.3$ and we compare the empirical distribution of the $L(m,n)$ statistic, rescaled and recentered as described in Example \ref{ex:spikedfluc}, with the pdf of the $\TWC$ distribution, computed as described in Section \ref{sec:tw2}. In (b), the external arrival rate $\alpha = 0.7$ and we compare the empirical  distribution of the $L(m,n)$ statistic, rescaled and recentered as described in Example \ref{ex:spikedfluc}, with the pdf of the normal distribution. See Section \ref{sec:tw2} for a discussion of the properties of the $\TWC$ distribution; Figure \ref{fig:twnormal} compares the $\TWC$ distribution with the normal distribution.}
\label{fig:supersub example}
\end{figure}

This example illustrates another phase transition.
The last equation in~\eqref{eq:oneslowexa} shows that the leading order asymptotic of $L(m,n)$ changes if the rate $\mu'$ of the slow server is below a threshold.
If this rate is small enough, i.e. below the threshold $\frac{\sqrt{n}}{\sqrt{m}+\sqrt{n}} \mu$, then the time spent by the customers in this queue is significant large and it affects the total exit time.
However, if the rate is above the threshold, the effect of the single slow server is not detectable in terms of the leading order (and second order indeed) asymptotic of $L(m,n)$, as seen in the first two equations in~\eqref{eq:oneslowexa}.
Note that this threshold is same as the one for $\alpha$.

\begin{example}[Two slow servers]\label{ex:spiked2}
Assume $\ser_1=\mu'$, $\ser_2=\mu''$, and $\ser_3=\cdots=\ser_m=:\mu$ where $\alpha<\mu'< \mu''< \mu$.
Then  we can apply Corollary~\ref{rem:con}, and we have $H=\delta_{\mu}$, $\mu_{\inf}=\mu'$, and
$\crit= \frac{\sqrt{n}}{\sqrt{m}+\sqrt{n}} \mu$.
Using Corollary~\ref{rem:con}, we have exactly the same result as~\eqref{eq:oneslowexa}.
Note that for the case (iii) in which the slow servers affect the batch latency, the leading order of the exit time depends only on the lowest service rate, not the second lowest service rate.
It is easy to check that~\eqref{eq:oneslowexa} also holds if $\mu'=\mu''<\mu$ or if there are three slow servers, etc.
\end{example}

{
Hence the two key parameters are $\alpha$ and the rate of the slowest server.

}

\begin{example}[Second order asymptotics]\label{ex:spikedfluc}
For  Examples~\ref{ex:spiked1} and~\ref{ex:spiked2}, Corollary \ref{rem:con23} implies that
\begin{equation}
\begin{split}
	L(m,n) \approx \begin{cases}
	\frac{(\sqrt{m}+\sqrt{n})^2}{\mu} +\frac{\sqrt{m}+\sqrt{n}}{\mu} \big( \frac1{\sqrt{m}}+\frac1{\sqrt{n}} \big)^{1/3}  \TWC
\quad &\text{if $\alpha< \frac{\sqrt{n}}{\sqrt{m}+\sqrt{n}} \mu< \mu'$,} \\
	\big(\frac{m}{\mu-\alpha}+\frac{n}{\alpha}\big) +  \big(\frac{m}{(\mu-\alpha)^2}-\frac{n}{\alpha^2}\big)^{1/2}\mathcal{N}(0,1) \quad &\text{if $ \frac{\sqrt{n}}{\sqrt{m}+\sqrt{n}} \mu<\alpha$,}\\
	\big( \frac{m}{\mu-\mu'}+ \frac{n}{\mu'}\big) +  \big( \frac{n}{(\mu')^2}-\frac{m}{(\mu-\mu')^2}  \big)^{1/2} \mathcal{N}(0,1)
 \quad &\text{if $\mu'< \frac{\sqrt{n}}{\sqrt{m}+\sqrt{n}} \mu$.}
	\end{cases}
\end{split}
\end{equation}
\end{example}

{Figure \ref{fig:supersub example} validates the distributional characterization in Example \ref{ex:spikedfluc} for the setting where $\mu_1 = \ldots = \mu_m = 1$ and $n = m$ so that when $\alpha < 0.5$ we expect the $\TWC$ distribution whereas when $\alpha> 0.5$ we expect a standard normal distribution for the recentered and rescaled $L(m,n)$ distribution.}

\section{Proof of Theorem~\ref{thm:exact}}\label{sec:proofexact}

In a  tandem of queues, let $w(i,j)$ denote the service time for customer $j$ at server $i$.
Here we label the customers in the order that they exit the system and label the servers in  the usual order.
The exit time $L(i,j)$ of customer $j$ from queue $i$ satisfies a recursive relation
\begin{equation}\label{eq:lpp recursion}
	L(i,j)= \max\{L(i-1,j), L(i,j-1)\} + w(i,j), \qquad i,j\in \Z.
\end{equation}
This  recursion can be found in the {seminal} paper by Glynn and Whitt \cite{Glynn-Whitt91} wherein the authors attribute the formulation to Tembe and Wolff \cite{Tembe-Wolff74}.
Note that this recursion %is a deterministic relation and
holds for any service times $w(i,j)$.

The above recursion can be recast \cite{Glynn-Whitt91} as the following directed last passage percolation (DLPP) problem :
\begin{equation}\label{eq:lpp wij}
	L(i,j)= \max_{\pi \in P(i,j)} \bigg( \sum_{(k,\ell)\in \pi} w(k,\ell) \bigg),
\end{equation}
where $P(i,j)$ is the set of `up/right paths' ending at $(i,j)$ \textit{i.e.} $\pi\in P(i,j)$ if
$\pi = \{(k_{s}, \ell_s)\}_{s=-\infty}^0$ such that $(k_0, \ell_0)=(i,j)$ and $(k_s, \ell_s)-(k_{s-1}, \ell_{s-1})$ is either $(1,0)$ or $(0,1)$ for all $s\le 0$.
Here we take $w(k, \ell)=0$
if there is no customer $k$ or if the customer $k$ starts at the server labeled larger than $\ell$ at the initial time so that the customer $k$ does not need any service from the server $\ell$.
The above identity can be obtained by noting that the right-hand-side of (\ref{eq:lpp wij}) satisfies the same recurrence as $L(i,j)$ in (\ref{eq:lpp recursion}).
%since a path in $P(i,j)$ consists of either a path in $P(i-1,j)$ and $(i,j)$, or a path in $P(i,j-1)$ and $(i,j)$.
In the context of DLPP, $L(m,n)$ is referred as the `last passage time' to the site $(m,n)$ and $w(i,j)$ are called weights (see, for example, \cite{Johansson00,Ferrari-Spohn,Corwin12}).

Certain DLPP models with special weights are called `solvable' and they are defined as follows.

\begin{defn}[Solvable DLPP models]
Let  $a_i$ and $b_i$, $i=1, 2, \cdots$, be the real numbers in $(-\infty, \infty]$ such that $a_i+b_j>0$ for all $i,j\ge 1$.
For $i,j \ge 1$, let $w(i,j)$ be independent exponential random variables with rate $a_i+b_j$ (so that the mean is $1/(a_i+b_j)$.)
For other $i,j$ in $\Z^2$, we set $w(i,j)=0$.
%If $a_i+b_j=\infty$,  we regard $w(i,j)=0$.
The DLPP model with these weights are defined as solvable.
\end{defn}

{
A special property of these solvable DLPP models is that they are related to certain random matrices.
The following result is due to Borodin and P\'ech\'e \cite{Borodin-Peche08a}.
The case when all $a_i$ and $b_i$ are same was first obtained  by Johansson in \cite{Johansson00}.
}

\begin{prop}[\cite{Borodin-Peche08a}]\label{prop:bp}
Let $L(m,n)$ be the last passage time for the solvable DLPP model.
On the other hand,
let $X$ be an $m\times n$ matrix with independent entries distributed as
\begin{equation} \label{eq:rmt aibj}
	X_{ij} \sim \mathcal{CN}\left(0, \frac{1}{a_i+b_j} \right).
\end{equation}
Then
\begin{equation}
L(m,n) \overset{\mathcal{D}}{=} \Lambda_{\max}(XX^{*}),\qquad m,n\ge 1,
\end{equation}
where $\Lambda_{\max}$ denotes the largest eigenvalue.
\end{prop}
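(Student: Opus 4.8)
\emph{Proof proposal.} I would prove the proposition by exhibiting a single distribution function, a Fredholm determinant whose kernel is built from $\{a_i\}$ and $\{b_j\}$, and showing that both $L(m,n)$ (the last passage time of the solvable model) and $\Lambda_{\max}(XX^{*})$ realize it; this follows the template of Johansson \cite{Johansson00} for the homogeneous case, now carried out with a variance profile. Two reductions are free. First, directed last passage percolation is invariant under transposing the weight array (up-right paths through the $m\times n$ array biject with up-right paths through the $n\times m$ array, with $a_i$ and $b_j$ exchanging roles), and $\Lambda_{\max}(XX^{*})=\Lambda_{\max}(X^{*}X)$; so without loss of generality $m\le n$. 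Second, the case $m=1$ is immediate: the last passage time is $\sum_{j=1}^{n}w(1,j)$, a sum of independent exponentials of rates $a_{1}+b_{j}$, while $XX^{*}$ is rank one with unique nonzero eigenvalue $\sum_{j=1}^{n}|X_{1j}|^{2}$, and each $|X_{1j}|^{2}$ is exponential of rate $a_{1}+b_{j}$. So the substance lies in $2\le m\le n$.

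For the percolation side I would pass through the geometric model. Take $w_{\epsilon}(i,j)$ geometric with $\mathbb{P}(w_{\epsilon}(i,j)=k)=(1-x_{i}y_{j})(x_{i}y_{j})^{k}$, $x_{i}=1-\epsilon a_{i}$, $y_{j}=1-\epsilon b_{j}$; then $\epsilon\, w_{\epsilon}(i,j)$ converges in distribution to $\mathrm{Exp}(a_{i}+b_{j})$ and $\epsilon$ times the geometric last passage time converges to $L(m,n)$. For fixed $\epsilon$ the Robinson--Schensted--Knuth correspondence sends the array $\{w_{\epsilon}(i,j)\}$ to a random Young diagram $\lambda$ whose largest part is the geometric last passage time, and $\lambda$ has the Schur measure law $\mathbb{P}(\lambda)\propto s_{\lambda}(x_{1},\dots,x_{m})\, s_{\lambda}(y_{1},\dots,y_{n})$. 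By Okounkov's formula the configuration $\{\lambda_{i}-i\}$ is a determinantal point process with an explicit double contour integral kernel, so the distribution function of the geometric last passage time is a Fredholm determinant; as $\epsilon\downarrow0$ it converges to $\det(1-\mathcal{K})_{L^{2}(\xi,\infty)}$ with
\begin{equation}
\mathcal{K}(\xi,\eta)=\frac{1}{(2\pi i)^{2}}\oint\!\!\oint \frac{\prod_{j=1}^{n}(\zeta+b_{j})}{\prod_{i=1}^{m}(\zeta-a_{i})}\cdot\frac{\prod_{i=1}^{m}(\omega-a_{i})}{\prod_{j=1}^{n}(\omega+b_{j})}\cdot\frac{e^{\zeta\xi-\omega\eta}}{\zeta-\omega}\, d\zeta\, d\omega ,
\end{equation}
the $\zeta$-contour enclosing $\{a_{1},\dots,a_{m}\}$ and the $\omega$-contour enclosing $\{-b_{1},\dots,-b_{n}\}$, chosen disjoint (possible since $a_{i}+b_{j}>0$). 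Hence $\mathbb{P}(L(m,n)\le\xi)=\det(1-\mathcal{K})_{L^{2}(\xi,\infty)}$.

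For the matrix side I would begin from the density of $X$ on $\mathbb{C}^{m\times n}$, proportional to $\exp\!\big(-\sum_{i,j}(a_{i}+b_{j})|X_{ij}|^{2}\big)=\exp\!\big(-\mathrm{tr}(D_{a}XX^{*})-\mathrm{tr}(D_{b}X^{*}X)\big)$ with $D_{a}=\mathrm{diag}(a_{1},\dots,a_{m})$, $D_{b}=\mathrm{diag}(b_{1},\dots,b_{n})$. Diagonalizing $XX^{*}=U\Lambda U^{*}$ ($U\in U(m)$, $\Lambda=\mathrm{diag}(\lambda_{1},\dots,\lambda_{m})$) and $X^{*}X=V\widetilde{\Lambda}V^{*}$ ($V\in U(n)$, $\widetilde{\Lambda}=\mathrm{diag}(\lambda_{1},\dots,\lambda_{m},0,\dots,0)$), the change of variables $X\mapsto(U,\Lambda,V)$ contributes the Jacobian factor $\prod_{i<j}(\lambda_{i}-\lambda_{j})^{2}\prod_{i}\lambda_{i}^{n-m}$. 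Integrating out $U$ and $V$ via the Harish-Chandra--Itzykson--Zuber integral — the $V$-integral being a confluent HCIZ, since $\widetilde{\Lambda}$ has $n-m$ coincident zero eigenvalues, so it is evaluated by a limiting argument — collapses each unitary integral to a single $m\times m$ determinant and leaves the joint density of $(\lambda_{1},\dots,\lambda_{m})$ in biorthogonal-ensemble form, proportional to $\det[\psi_{i}(\lambda_{j})]_{i,j=1}^{m}\det[\phi_{i}(\lambda_{j})]_{i,j=1}^{m}$, where $\psi_{i}(\lambda)=e^{-a_{i}\lambda}$ and the $\phi_{i}$ are the Laguerre-type functions produced from $\{b_{j}\}$ and the factor $\lambda^{n-m}$. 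A biorthogonal ensemble is determinantal, so there is a correlation kernel $\mathbb{K}$, computable in double contour integral form from the integral representations of $\psi_{i}$ and $\phi_{i}$, with $\mathbb{P}(\Lambda_{\max}(XX^{*})\le\xi)=\det(1-\mathbb{K})_{L^{2}(\xi,\infty)}$.

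It then remains to identify $\mathcal{K}$ and $\mathbb{K}$ on $(0,\infty)$: both are double contour integrals in the variables dual to $\{a_{i}\}$ and $\{-b_{j}\}$, and the equality is obtained by deforming contours and collecting residues; since the distribution function on each side is $\det(1-\,\cdot\,)_{L^{2}(\xi,\infty)}$ applied to the common kernel, the proposition follows. I expect the principal obstacles to be (i) the confluent HCIZ evaluation when $m<n$, which is exactly what generates the correct $\lambda^{n-m}$ hard-edge behavior of $\mathbb{K}$, and (ii) the final kernel identification; the RSK/Schur-measure input and the geometric-to-exponential limit are standard but not effortless. An alternative route that bypasses explicit kernels is a coupling in the spirit of Dieker and Warren: build $X$ one column at a time, observe that the eigenvalues of $X_{k}X_{k}^{*}$ and $X_{k+1}X_{k+1}^{*}$ (where $X_{k}$ consists of the first $k$ columns) interlace, being a rank-one update, and verify that the conditional law of the new eigenvalues given the old coincides with the push-block RSK dynamics generating exponential last passage percolation; that matching of transition kernels again relies crucially on the variance profile being exactly $1/(a_{i}+b_{j})$, and is the analytic heart of that approach.
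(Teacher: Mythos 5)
The paper does not prove Proposition~\ref{prop:bp}; it cites Borodin--P\'ech\'e for the general statement (and Johansson for the equal-parameter case, Okounkov for the Fredholm-determinant step, which the paper later records separately as Proposition~\ref{prop:FredDet}). Your reconstruction follows essentially the same route as those references: geometric weights $\to$ RSK $\to$ Schur measure with specializations $(1-\epsilon a_i)$, $(1-\epsilon b_j)$ $\to$ Okounkov's double contour-integral kernel $\to$ exponential limit on the percolation side, versus density factorization $\to$ SVD Jacobian $\to$ HCIZ integration $\to$ biorthogonal ensemble kernel on the matrix side, then matching the two Fredholm determinants. The kernel you write down agrees with~\eqref{eq:Kdouble} up to transposition and orientation/sign-of-exponent conventions, which do not change the Fredholm determinant, and your $m=1$ and transposition reductions are correct. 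So the main route is right and is the one the paper relies on; the coupling alternative you mention (column-by-column interlacing and a push-block/RSK transition-kernel identity, Dieker--Warren style) is a genuinely different and arguably cleaner proof, trading contour-integral bookkeeping for a single transition-kernel match, but it is not the route the paper invokes.

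The honest caveat, which you yourself flag, is that the two hard steps --- the confluent HCIZ evaluation in the rectangular case $m<n$ that produces the correct $\lambda^{n-m}$ hard-edge factor, and the final contour manipulation identifying $\mathcal{K}$ with $\mathbb{K}$ --- are asserted rather than carried out. As written this is a correct and well-organized plan, not a complete argument; the omissions are real but are exactly the content of the cited literature, so nothing in the plan would fail, it simply is not self-contained.
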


{
Another special property of the solvable DLPP models is that the cdf of $L(m,n)$ can be obtained explicitly for all $m, n$. This will be stated in Proposition~\ref{prop:FredDet}.
}

Our setting of tandem in equilibrium can be thought of a special case of the solvable DLPP problem as follows.
Note that the exit time of a customer who arrives via an external Poissonian arrival process of rate $\alpha$ at an $M/M/1$ queue in equilibrium with service rate $\ser_i$ (where $\alpha<\mu_i$) is exponentially distributed with rate $\ser_i - \arr$. Moreover, if the same customer arrives at a tandem of $m$ $M/M/1$ queues in equilibrium, then
the exit time of the customer from all queues is distributed as $X_1+\cdots + X_m$ where $X_i$ are independent exponential random variables with rate $\ser_i-\arr$.
{This means that it is as if the customer arrives at a tandem of $m$ queues which are all empty but the service rate is changed to $\ser_i-\arr$.
Hence if a batch of $n$ customers arrive %via a Poisson arrival process
to a tandem of $m$ queues in equilibrium,
then in terms of the latency $L(m,n)$ it is as if the queues are initially all empty, the service rates for the first of the $n$ customers are $\ser_i-\arr$, and the service rates for the other $n-1$ following customers are $\ser_i$.
%then the exit time $L(m,n)$ of all $n$ customers has the same distribution as the exit time in the case when
%the queues are initially empty, the service rates are $\ser_i$ for all but the first of the $n$ customers for whom the service rates are $\ser_i-\arr$:
Thus the weight $w(i,j)$ is exponential with rate $\ra_i$ for $i=1, \ldots, m$, $j = 2, \ldots, n$ while
$w(i,1)$ is exponential with rate $\ser_i-\arr$ for $i=1, \ldots, m$. All other $w(i,j)=0$.
Hence,
}

\begin{prop}\label{prop:aa}
The latency $L(m,n)$ for the model in Section~\ref{sec:main results} is distributed as the last passage time of the solvable DLPP model with
%$a_i =\ra_i$ for $i=1, \ldots, m$, $b_1= -\arr$, $b_j=0$ for $j=2, \ldots, n$, and all other $a_i=b_j=\infty$.
\begin{equation}\label{eq:eqweights}
\begin{split}
	a_i =\ra_i, \quad \text{for $i=1, \ldots, m$}, \qquad
	        b_1 = -\arr,  \qquad
	        b_j  = 0,  \quad  \text{for $j=2, \ldots, n$,}
\end{split}
\end{equation}
and all other $a_i=b_j=\infty$.
\end{prop}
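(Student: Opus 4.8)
The plan is to reduce the statement to the pathwise last passage recursion~\eqref{eq:lpp recursion} --- which holds for \emph{any} array of service times $w(i,j)$ --- and then to recognize the joint law of the relevant weights as that of a solvable DLPP model. Label the $n$ batch customers $1,\dots,n$ in order of exit from queue $m$ (so customer $1$ is the first of the batch), label the customers already present at the arrival instant $0,-1,-2,\dots$, measure time from the arrival instant, and set $L(0,j):=0$ for $j=1,\dots,n$ since the whole batch enters queue $1$ simultaneously. As in the convention stated after~\eqref{eq:lpp wij}, an existing customer contributes weight $0$ at every queue it has already left by the arrival instant; every other $w(i,j)$ is a genuine $\mathrm{Exp}(\mu_i)$ service time, the only point to note being that the residual service time of the customer currently in service at each queue is again $\mathrm{Exp}(\mu_i)$ by memorylessness.

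Two facts are needed. First, for $j\ge 2$ the service time $w(i,j)$ of batch customer $j$ at server $i$ is a fresh $\mathrm{Exp}(\mu_i)$ variable, independent of the equilibrium state, of the existing customers' (residual) service times, and of customer $1$'s entire trajectory; this is immediate from the customer-independent, memoryless service assumption. Second, and more substantively, the departure-time process $\bigl(L(1,1),\dots,L(m,1)\bigr)$ of customer $1$ has the law of the partial sums $\bigl(\sum_{k=1}^{i}Y_k\bigr)_{i=1}^{m}$ with $Y_1,\dots,Y_m$ independent and $Y_i\sim\mathrm{Exp}(\mu_i-\alpha)$, and it is independent of the family $\{w(i,j):1\le i\le m,\,2\le j\le n\}$. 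To justify this: because the batch enters at a Poisson epoch, PASTA implies customer $1$ sees the time-stationary product-form state of the tandem (independent geometric queue lengths); compounding the corresponding geometric number of memoryless services gives that its sojourn $L(1,1)-L(0,1)$ in queue $1$ is $\mathrm{Exp}(\mu_1-\alpha)$; and Burke's theorem --- the departure stream of an $M/M/1$ queue in equilibrium up to time $t$ is Poisson and independent of that queue's state at time $t$ --- applied inductively shows that when customer $1$ reaches queue $i$ (at a departure epoch of queue $i-1$), the state of queue $i$ is again stationary and independent of everything upstream, so that its sojourn $L(i,1)-L(i-1,1)$ is $\mathrm{Exp}(\mu_i-\alpha)$ and independent of the sojourns in queues $1,\dots,i-1$. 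That the successive sojourn times of a customer through a tandem of $M/M/1$ queues in equilibrium are independent exponentials is classical; see \cite{reich1957waiting,friedman1965reduction,Tembe-Wolff74}.

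To assemble, iterate~\eqref{eq:lpp recursion} over $j=2,\dots,n$ and $i=1,\dots,m$: each such $L(i,j)$ depends only on $L(i-1,j)$, $L(i,j-1)$, $w(i,j)$, and since $L(0,j)=0$ the iteration never touches a column with index $\le 0$. Hence $L(m,n)=\Phi\bigl((L(i,1))_{i=1}^{m},(w(i,j))_{1\le i\le m,\,2\le j\le n}\bigr)$ for a fixed deterministic (last-passage-type) functional $\Phi$, and the existing customers enter \emph{only} through the column $(L(i,1))_i$. By the two facts above, this input array has exactly the joint law of the weights of the solvable DLPP model with $a_i=\mu_i$ for $1\le i\le m$, $b_1=-\alpha$, $b_j=0$ for $2\le j\le n$, and $a_i=b_j=\infty$ otherwise, the requirement $a_i+b_1=\mu_i-\alpha>0$ being exactly the stability assumption $\alpha<\mu_i$; indeed, for that model the first column satisfies $L(i,1)=\sum_{k=1}^{i}w(k,1)$ by the very same recursion with empty-queue boundary data, so $\Phi$ is applied to the same deterministic object in both cases. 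Therefore $L(m,n)\overset{\mathcal{D}}{=}$ the last passage time of the solvable DLPP model with weights~\eqref{eq:eqweights}, which is the assertion of the proposition.

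The step I expect to be the main obstacle is the second fact above --- the joint independence and exponentiality of customer $1$'s successive queue sojourns. It rests on the quasi-reversibility (Burke) structure of tandem Jackson networks, and a little care is needed to check that conditioning on customer $1$'s passage through queues $1,\dots,i$ (equivalently, on the arrival epoch of customer $1$ to queue $i+1$, which is determined by the upstream dynamics) does not bias the state of queue $i+1$ at that epoch away from its product-form stationary law. Everything else --- the pathwise validity of~\eqref{eq:lpp recursion}, the bookkeeping of zero weights for existing customers that have already departed a given queue, and the final identification with the solvable model --- is routine once this fact is in hand.
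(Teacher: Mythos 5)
Your proof is correct and follows essentially the same reduction as the paper: the first batch customer's sojourn-time increments through the equilibrium tandem form an independent column of $\mathrm{Exp}(\mu_i-\alpha)$ weights, the remaining batch customers contribute fresh $\mathrm{Exp}(\mu_i)$ weights, and both feed the last-passage recursion exactly as they do in the solvable DLPP with the parameters in~\eqref{eq:eqweights}. Your PASTA/Burke argument supplies the justification for the independence of the sojourn increments (and for their independence from the fresh weights $w(i,j)$, $j\ge2$) that the paper asserts without detail, treating it as classical.
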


The same idea was used in \cite{Prahofer-Spohn02a} to map an interacting particle system
starting in equilibrium to a solvable DLPP model.
%, called the totally asymmetric simple exclusion process, starting in equilibrium to a solvable DLPP problem.
%DLPP is also related to random growth models and has been extensively studied in both physics and mathematics communities (see, for example, \cite{Johansson00}, \cite{Corwin12}, and references therein.)

Theorem~\ref{thm:exact} now follows from this proposition and
Proposition~\ref{prop:bp}.

\medskip

{
We note that there are a few results on the asymptotics of the last passage time for i.i.d. weights other than exponential random variables:  \cite{Glynn-Whitt91,Seppalainen97a,Baik-Suidan05,Bodineau-Martin05,Suidan06,Hambly-Martin}.
These results can be interpreted as the results for the latency of a batch of customers in a tandem of queues which are initially empty and the service rates are i.i.d.
}

\section{Intuition for the critical threshold}\label{sec:intuition}

The value of the critical threshold in the Examples in Section~\ref{sec:examples} can be understood 
using the DLPP formulation if we already know the asymptotic of $L(m,n)$ when $\alpha=0$. 
Consider the setup of Example~\ref{ex:null} when all $\mu_i$ are $\mu$. 
Then the rate of $w(i,j)$ for DLPP model is $\mu-\alpha$ for $j=1$ and is $\mu$ for $j\ge 2$. 
Let us use the notation $L_\alpha(m,n)$ in order to indicate the dependence on $\alpha$. 
Suppose we already know that $L_0(m,n)\approx \frac{(\sqrt{m}+\sqrt{n})^2}{\mu}$. 
Now a path $\pi$ in the formula~\eqref{eq:lpp wij} consists of two parts, one from $(1,1)$ to some point $(p,1)$ and the other from $(p,2)$ to $(m,n)$. See Figure~\ref{fig:DLPP} (a). Here $p$ may be $1$. 
\begin{figure}[htbp]
\centering
\subfigure[]
{  \includegraphics[scale=0.4]{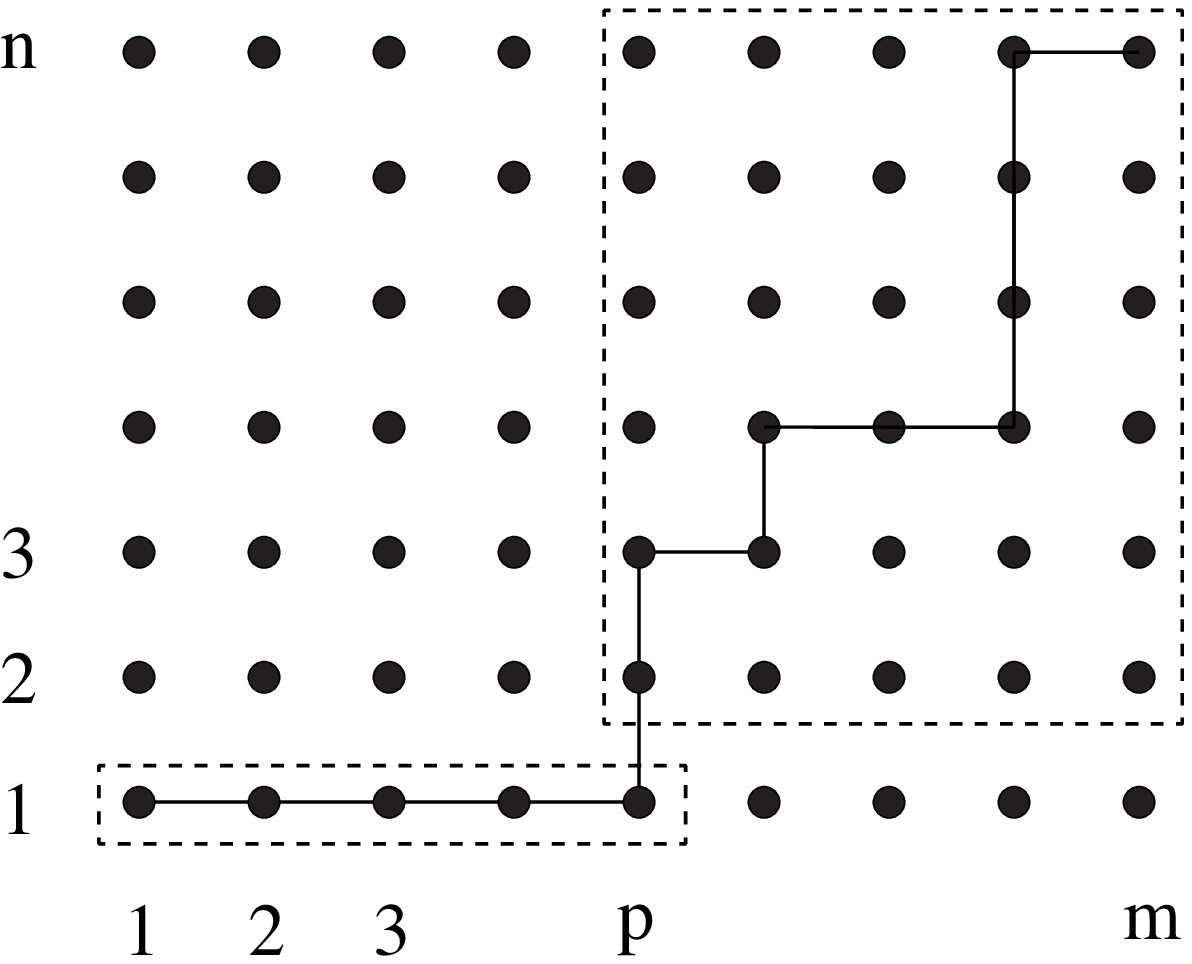}
} \qquad 
\subfigure[]
{  \includegraphics[scale=0.4]{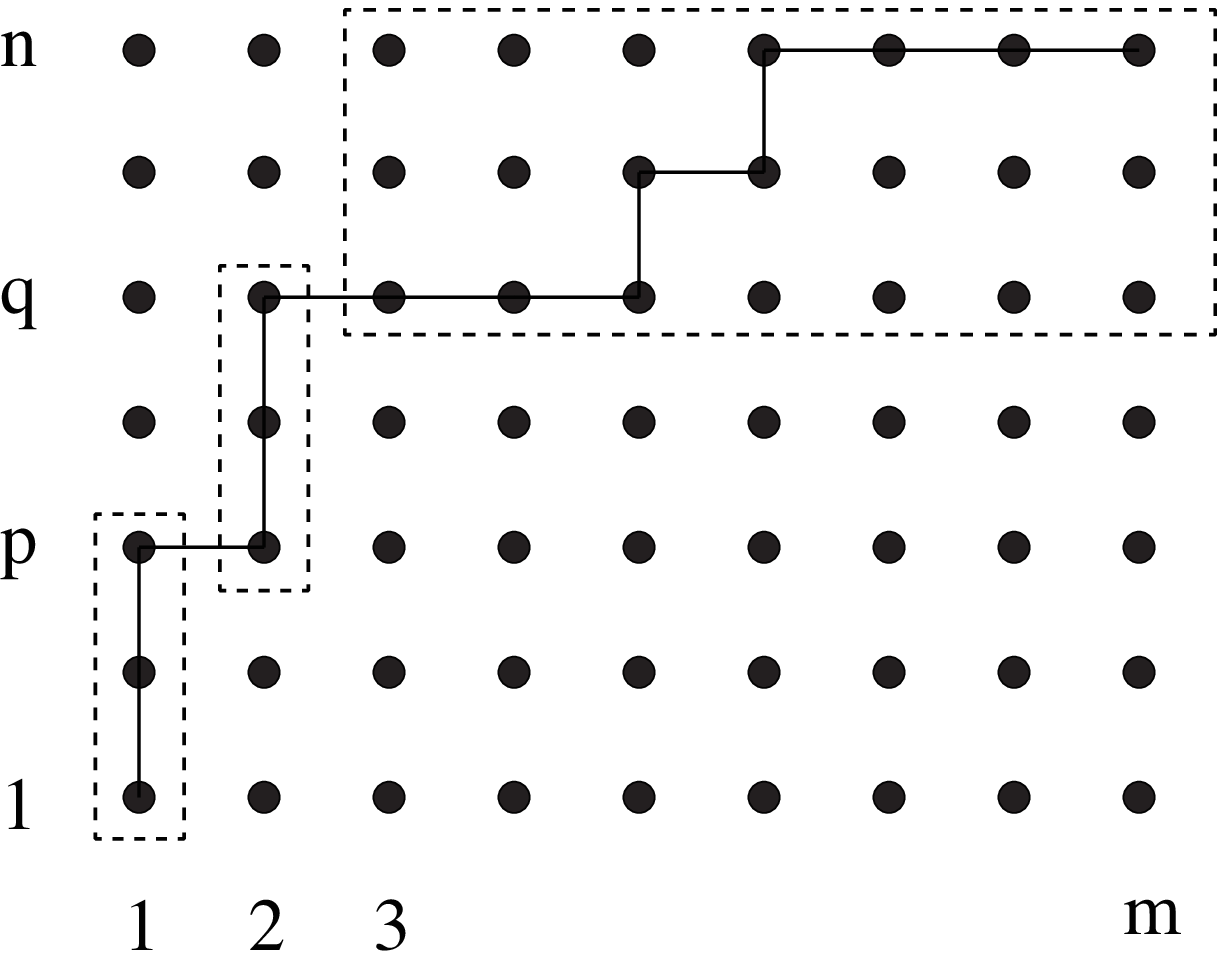}
}
\caption{(a) An up/right path $\pi$ for Example~\ref{ex:null}. 
(b) An up/right path for Example~\ref{ex:spiked2} when $\alpha=0$}
\label{fig:DLPP}
\end{figure}

The expectation of the sum of $w(k, \ell)$ along the first part is 
$\frac{p}{\mu-\alpha}$ using the sum of independent exponential random variables.
On the other hand, the second part lies in a rectangle of size $(m-p+1)\times (n-1)$ where all rates are $\mu$. 
If we choose the maximal path in this part, its expectation is $\frac{(\sqrt{m-p+1}+\sqrt{n-1})^2}{\mu}$ using the formula for $L_0$. 
Now since $p$ can be any point, we expect that 
\begin{equation}\label{eq:maxtem1}
\begin{split}
	L_\alpha(m,n)
	&= \max_{p\in \{1, \cdots, m\}} 
	\bigg( \frac{p}{\mu-\alpha} + \frac{(\sqrt{m-p+1}+\sqrt{n-1})^2}{\mu} \bigg) \\
	&\approx \max_{0\le x\le 1} 
	\bigg( \frac{mx}{\mu-\alpha} + \frac{(\sqrt{m(1-x)}+\sqrt{n})^2}{\mu} \bigg).
\end{split}
\end{equation}
It is easy to check that the maximum occurs at $x=0$ if $\alpha\le \crit$
and occurs at $x=1- \frac{n (\mu-\alpha)^2}{m \alpha^2}\in (0,1)$ if $\alpha>\lambda$
where $\crit= \frac{\sqrt{n}}{\sqrt{m}+\sqrt{n}}\mu$.
This gives the threshold for $\alpha$. 
The maximum is given by 
$\frac{(\sqrt{m}+\sqrt{n})^2}{\mu}$ in the first case and by $\frac{m}{\mu-\alpha}+\frac{n}{\alpha}$
in the second case which is consistent to~\eqref{eq:example2}.
This is same argument given in Section 6 of \cite{Baik-Ben_Arous-Peche05}.

The threshold for the rate of the slow servers in Example~\ref{ex:spiked1} and Example~\ref{ex:spiked2} can be understood in a similar way. In the setting of Example~\ref{ex:spiked2}, when $\alpha=0$, the up/right path consists of three parts as indicated in Figure~\ref{fig:DLPP} (b), and we are lead to 
\begin{equation}\label{eq:maxtem}
\begin{split}
	L(m,n)
	&\approx \max_{0\le x\le y\le 1} 
	\bigg( \frac{nx}{\mu'} +\frac{n(y-x)}{\mu''} + \frac{(\sqrt{m}+\sqrt{n(1-y)})^2}{\mu} \bigg).
\end{split}
\end{equation}
Recall that $\mu'<\mu''<\mu$. 
A direct calculation shows that the maximum occurs when $x=y$ and the maximum does not depend on $\mu''$. 
This shows that the latency depends only on the slowest rate, not the other slow rates. 
If $\mu'>\frac{\sqrt{n}}{\sqrt{m}+\sqrt{n}}\mu$, 
then the maximum is $\frac{(\sqrt{m}+\sqrt{n})^2}{\mu}$ which is attained at $x=y=0$, 
and if $\mu'<\frac{\sqrt{n}}{\sqrt{m}+\sqrt{n}}\mu$, then the maximum is $\frac{m}{\mu-\mu'}+ \frac{n}{\mu'}$ which is attained at $x=y=1-\frac{m}{n} ( \frac{\mu'}{\mu-\mu'})^2$.
This calculation is consistent to Example~\ref{ex:spiked2}. 
If $\alpha>0$, then one needs to take a maximum of~\eqref{eq:maxtem} and~\eqref{eq:maxtem1}.

It is possible to make the above intuitive argument rigorous for all the examples discussed in Section~\ref{sec:examples}. 
However since we would like to prove general asymptotic theorems for much wider settings, we do not attempt to use the above argument and use a different approach discussed in the next section. 

\section{Proof of  Theorems~\ref{th:main} and~\ref{th:main2}} \label{sec:proof2}

We now prove Theorems~\ref{th:main} and~\ref{th:main2}.
Note that Theorem~\ref{th:main} follows from Theorem~\ref{th:main2}, except for the case (c) when $r>1$.
This case follows from Remark~\ref{rmk:fluctuationc} instead for which we comment at the end how the proof should be modified.
Hence we focus on the proof of Theorem~\ref{th:main2}.

\subsection{Formula of cumulative distribution function}\label{sec:cdffo}

The starting point of the asymptotic analysis is an explicit formula of the cdf of $L(m,n)$.
%This explicit formula was established for the general solvable DLPP models defined in Proposition~\ref{prop:bp}
%and propelled the exciting advancement of the theory of DLPP and random growth models in the last decade.
We state the cdf for general solvable model and then specialize it to our case.
Recall the parameters $a_i, b_j$ in Proposition~\ref{prop:bp}.
Define the kernel
\begin{equation}\label{eq:Kdouble}
\begin{split}
	K(\xi, \eta) = \frac{-1}{(2\pi)^2} \int \int
	\bigg( \prod_{i=1}^m \frac{a_i-w}{a_i-z} \bigg)
	\bigg( \prod_{j=1}^n  \frac{b_j+z}{b_j+w} \bigg)
	 \frac{e^{\eta w-\xi z}}{w-z} dz dw, \quad \xi, \eta\in \R.
\end{split}
\end{equation}
Here the contours are simple closed curves in the complex plane $\C$, oriented counter-clockwise, such that the contour of $z$ contains all $a_i$'s inside, the contour of $w$ contains all $-b_j$'s inside, and they do not intersect.
See Figure~\ref{fig:contour}. Note that the condition $a_i+b_j>0$ guarantees that such contours exist.
\begin{figure}[htbp]
\centering
\includegraphics[scale=0.6]{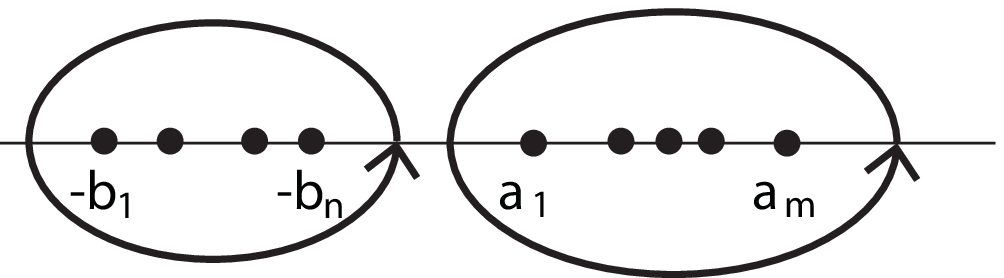}
\caption{Contours for~\eqref{eq:Kdouble}}
\label{fig:contour}
\end{figure}
%Alternatively, we may also take the contours as straight lines parallel to the imaginary axis such that the $z$-curve is oriented from bottom to top,  the $w$-curve is oriented from top to bottom and they satisfy $\max\{-b_j\}<Re(w)< Re(z)< \min\{a_i\}$.
%In~\eqref{eq:Kdouble},
%If $a_i=\infty$ for some $i$,  we interpret that $\frac{a_i-w}{a_i-z}=1$. The same remark also applies for $b_j$'s.
Now let  $K_x$ be the integral operator on the $L^2((x,\infty))$ defined by the kernel $K(\xi, \eta)$:
\begin{equation}
\begin{split}
	(K_x f)(\xi) = \int_x^\infty K(\xi, \eta) f(\eta) d\eta, \qquad \text{for $f\in L^2((x,\infty))$.}
\end{split}
\end{equation}
Then we have:

\begin{prop}[\cite{Johansson00}, \cite{Okounkov01a}]\label{prop:FredDet}
For $x>0$, %we have
\begin{equation}\label{eq:PKxL}
\begin{split}
	\mathbb{P}\{ L(m,n)\le x\} = \det(1-K_x).
\end{split}
\end{equation}
\end{prop}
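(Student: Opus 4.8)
The statement is classical, due to Johansson \cite{Johansson00} in the homogeneous case and to Okounkov \cite{Okounkov01a} in general (see also \cite{Borodin-Peche08a}); the plan is to outline the standard route through the RSK correspondence, the determinantal structure of the Schur measure, and a final passage to the exponential limit. I will not reprove the combinatorial inputs, only indicate how they assemble and where the analytic work lies.

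\emph{Discretization.} First I would pass from exponential to geometric weights. Fix $\epsilon>0$, set $q_i=e^{-\epsilon a_i}$ and $r_j=e^{-\epsilon b_j}$ (so $0<q_ir_j<1$ for $1\le i\le m$, $1\le j\le n$), and let $w_\epsilon(i,j)$ be independent geometric random variables with $\mathbb{P}\{w_\epsilon(i,j)=k\}=(1-q_ir_j)(q_ir_j)^k$ for $k\ge 0$, and $w_\epsilon(i,j)=0$ otherwise. Then $\epsilon\,w_\epsilon(i,j)\to w(i,j)$ in distribution, hence $\epsilon\,L_\epsilon(m,n)\to L(m,n)$, where $L_\epsilon(m,n)$ is the last passage time for the geometric array. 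It therefore suffices to compute the cdf of $L_\epsilon(m,n)$ and let $\epsilon\to 0$.

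\emph{RSK, Schur measure, and the gap probability.} Applying the RSK correspondence to $\{w_\epsilon(i,j)\}_{1\le i\le m,\,1\le j\le n}$, the common shape $\lambda$ of the resulting tableau pair has $\lambda_1=L_\epsilon(m,n)$, and the law of $\lambda$ is the Schur measure
\[
\mathbb{P}\{\lambda\}=\Big(\prod_{i=1}^m\prod_{j=1}^n(1-q_ir_j)\Big)\,s_\lambda(q_1,\dots,q_m)\,s_\lambda(r_1,\dots,r_n).
\]
By Okounkov's theorem \cite{Okounkov01a}, the point configuration $\{\lambda_i-i:i\ge 1\}\subset\mathbb{Z}$ is a determinantal point process whose correlation kernel is given by a double contour integral, namely the discrete analogue of \eqref{eq:Kdouble} in which each factor $(a_i-z)$ is replaced by $(1-q_i^{-1}z)$, each $(b_j+w)$ by $(1-r_j w)$ and so on, and the exponential $e^{\eta w-\xi z}$ is replaced by a ratio of powers $w^{\eta}/z^{\xi}$. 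The event $\{L_\epsilon(m,n)\le \ell\}$ is the event that this process has no point in the half-line $\{\ell,\ell+1,\dots\}$, so by the determinantal structure its probability equals the Fredholm determinant of the (conjugated) kernel restricted to that discrete half-line.

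\emph{Exponential limit and the main obstacle.} Finally I would set $\ell=\lfloor x/\epsilon\rfloor$ and let $\epsilon\to 0$. After the natural conjugation and rescaling the lattice by $\epsilon$, the discrete kernel converges pointwise to $K(\xi,\eta)$ of \eqref{eq:Kdouble}: the finite products tend to $\prod_{i=1}^m\frac{a_i-w}{a_i-z}$ and $\prod_{j=1}^n\frac{b_j+z}{b_j+w}$, the power ratio tends to $e^{\eta w-\xi z}$, and the Riemann sum over $\{\ell,\ell+1,\dots\}$ becomes $\int_x^\infty$, yielding $\mathbb{P}\{L(m,n)\le x\}=\det(1-K_x)$. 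Promoting this pointwise statement to convergence of the Fredholm determinants is the crux: it requires a uniform dominating bound on the conjugated discrete kernels, obtained by deforming the $z$- and $w$-contours in \eqref{eq:Kdouble} through suitable saddle points so that $\mathrm{Re}(\eta w-\xi z)$ decays in $\xi,\eta$ and the infinite products stay bounded away from their poles, after which dominated convergence applies term by term in the Fredholm expansion together with a tail estimate uniform in $\epsilon$. This trace-norm control of the limit transition is the main difficulty; the combinatorial steps are standard. As an alternative one can work directly with exponential weights via Proposition~\ref{prop:bp}, invoking the known determinantal structure of the eigenvalues of the deformed Laguerre-type matrix $XX^{*}$, whose largest-eigenvalue distribution is again a Fredholm determinant with kernel \eqref{eq:Kdouble}; the limit-transition difficulty is then simply replaced by the equivalent task of deriving that kernel by contour-integral manipulation of the biorthogonal ensemble.
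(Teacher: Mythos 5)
Your proposal follows the same route the paper sketches: the result is cited to Johansson (homogeneous case) and to Okounkov (general geometric weights via the Schur measure determinantal process), with a limit to exponential weights, for which the paper points to Theorem~3 of Borodin--P\'ech\'e as the explicit summary. You fill in the standard RSK/Schur-measure steps and correctly flag the trace-norm control of the $\epsilon\to 0$ limit as the real analytic work, so this is the paper's approach, just spelled out rather than cited.
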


Here the determinant is the Fredholm determinant of the operator $K_x$.
It can be expressed explicitly as
\begin{equation}\label{eq:Frdetdef}
\begin{split}
	\det(1-K_x) = 1 + \sum_{k=1}^\infty \frac{(-1)^k}{k!} \int_{(x, \infty)^k} \det\big( K(\xi_i, \xi_j\big))_{i,j=1}^k  d\xi_1 \cdots d\xi_k.
%	- \int_{x}^\infty  K(\xi, \xi) d\xi + \frac1{2!}  	\int_x^\infty \int_x^\infty \det(K(\xi_i, \xi_j))_{i,j=1}^2  d\xi_1 d\xi_2
\end{split}
\end{equation}
However, we do not use this expansion here. 
We only use the fact that
if a sequence of operators $T_n$ converges to $T$ in trace norm, then $\det(1-T_n)$ converges to $\det(1-T)$.
When the operators are given by kernels, as in our case, the convergence in trace norm is obtained if the kernels and their derivatives converge. See, for example, chapter 3 and 4 of \cite{Simon05}.
%See, for example, \cite{Simon05} for more details .

%\begin{proof}
Proposition~\ref{prop:FredDet} was first obtained by Johansson in \cite{Johansson00} when
all $a_i$'s and $b_j$'s are equal.
%This was done first by noting that the exponential weight is a simple limit of geometric weight
%and then showing that the DLPP model with iid geometric weights can be mapped to a certain probability measure on the set of partitions.
For general $a_i$ and $b_j$, the result was obtained for the geometric weights by Okounkov \cite{Okounkov01a} (see Section 2.2.3).
%The result of \cite{Okounkov01a} is given for the so-called Schur measure on partitions, but it is easy to see that it is same as the DLPP with geometric weights.
A simple limit to exponential weights of geometric weights yield the above proposition.
A good place where this is summarized explicitly is Theorem 3 of \cite{Borodin-Peche08a} (where one should set $r=s=p=1$).
%We note that Proposition~\ref{prop:bp} is obtained by evaluating the cdf of
%$\lambda_{\max}(XX^{*})$ and then noting that it is same as that of $L(m,n)$.
%A more direct map between solvable DLPP and the random matrix is yet to be found.
%For our case, we insert
For the choice of parameters~\eqref{eq:eqweights} of our case,
\begin{equation}\label{eq:K1-1}
\begin{split}
	K(\xi, \eta)
	&= \frac{-1}{(2\pi)^2} \int_{\Sigma_{ \{\alpha, 0\} } } \int_{\Sigma_{\{\mu_i\}_{i=1}^m} }
	\bigg(  \prod_{i=1}^m \frac{\mu_i-w}{\mu_i-z} \bigg)
	 \frac{z-\alpha}{w-\alpha}  \bigg( \frac{z}{w} \bigg)^{n-1}
	 \frac{e^{\eta w-\xi z}}{w-z} dz dw %\\
%	 & = \frac{-1}{(2\pi)^2} \int \int 	 \frac{e^{ m F(z,w)}}{w-z} dz dw
\end{split}
\end{equation}
where $\Sigma_A$ denotes a simple closed, counter-clockwise contour in $\C$ which encloses the points in the set $A$.

Theorem~\ref{th:main2} can now be obtained if we show that the kernel~\eqref{eq:K1-1} converges, after appropriate scaling, to the Airy kernel as $m,n \to\infty$.
This is done by applying the method of steepest-descent.
Some general references for the method of steepest-descent analysis are \cite{Erdelyi} \cite{Miller}.
The idea of steepest-descent method is to find the contour so that the real part of the exponent of the integrand has a unique maximum so that the contribution to the integral in the large $m,n$ limit is obtained from a small neighborhood of this maximum point.
The exponent is approximated by a few terms of the Taylor expansion, and then the approximate integral is evaluated explicitly.
The key step is to find the appropriate contour. This is obtained first by evaluating the critical point of the exponent and then finding the contour of the constant-phase passing through this critical point.
For the case at hand, there is a difference to the standard method of steepest-descent: The critical point vanished to the second order instead of the first order. This leads to Airy-type functions instead of Gaussian functions in the end.
The second order vanishing  of the critical point is indeed how the centering in a limit theorem is determined.
This is a typical phenomenon in the application of the method of steepest-descent in the theory of random matrices (see e.g. \cite{Baik-Ben_Arous-Peche05} \cite{El_Karoui07}).
In the proofs that follows, we make a special effort to highlight this key step and the associated issue of the determination of the centering in our Theorem so that the reader may understand the origin of the phase transitions.

%Since the asymptotic analysis has many similarities with the previous works, we do not present all the technical details here. %Instead we give a sketch of the proof and focus on explaining
% how we can find the appropriate centering and scaling of the theorem and how the phase transitions occur.
%how the centering and scaling of the theorem arise and how the limiting distribution is obtained.

\subsection{Scaling and conjugation of determinant}

Before we take the limit, we first recall  two basic facts about Fredholm determinants \cite{Simon05}. The first  is that the determinant is invariant under scalings. Namely, let $A$ be the trace class operator acting on the space $L^2((t, \infty))$ with kernel $A(a, b)$ and let $B$ the scaled operator defined by the  kernel
$B(a,b)= rA(c+ra, c+rb)$  which acts on $L^2((c+rt, \infty))$.
Then
$\det(1-A)= \det(1-B)$.

The second is that the determinant is invariant under conjugations by multiplicative operators.
%Let $A$ be the operator with kernel $A(a,b)$ acting the space $L^2((t,\infty))$.
Let $f(a)$ be a non-vanishing function on $(t, \infty)$.
Let $C(a,b)=f(a) A(a,b)\frac1{f(b)}$.
Suppose that the operator $C$ on $L^2((x,\infty))$ defined by the kernel $C(a,b)$ is a bounded trace-class operator.
Then $\det(1-A)=\det(1-C)$.

These properties follow from the definition~\eqref{eq:Frdetdef}. The first property is obtained from a simple change of variables, and the second property is a consequence of a property of the determinants of finite matrices.

\subsection{Critical points}\label{sec:cr}

Fix $s\in\R$.
Recall that we take the limit $m,n\to \infty$ such that $m/n\to \gamma$ for some $\gamma \in (0,\infty)$.
For each case of Theorem~\ref{th:main2}, we will show that for some constants $x=x(\gamma)$ and $p\in (0,1)$, {the probability}
$\Prob\{ L(m,n)\le xm+sm^{p}\}$ converges to a function in $s$.
In the analysis below, we will determine $x$ and $p$.

From~\eqref{eq:PKxL}, $\Prob\{ L(m,n)\le xm+sm^{p}\} = \det(1-K_{xm+sm^{p}})$.
%Here we set $x=x_0+sm^{-\gamma}$.
%$\det(1-K_{(x_0+sm^{-\gamma})m})$.
The operator acts on the Hilbert space $L^2((xm+sm^{p}, \infty))$ which varies in $m$.
%We first change this to a space which does not depend on $m$.
From the scale invariance, we see that % discussed in the previous section, we see that
$\det(1-K_{xm+sm^{\gamma}})= \det(1-\KK_x)$ where $\KK_x$ is defined by the scaled kernel
\begin{equation}\label{eq:K1-4}
\begin{split}
	\KK_{x}(\xi, \eta)
	=  m^{p} K \big( xm+sm^{p} +m^{p} \xi  ,
	 xm+sm^{p} +m^{p} \eta \big).
\end{split}
\end{equation}
Note that now the Hilbert space $L^2((0,\infty))$ for the operator $\KK_x$ does not depend on $m$.
We have
\begin{equation}\label{eq:K1-5}
\begin{split}
	\mathbb{P} \bigg\{ \frac{L(m,n)-xm}{m^{p}} \le s \bigg\} = \det(1-\KK_x).
\end{split}
\end{equation}
By~\eqref{eq:K1-1}, the kernel~\eqref{eq:K1-4} equals %  can be written as
\begin{equation}\label{eq:K1-2}
\begin{split}
	\KK_x(\xi, \eta)
	& = \frac{-m^{p}}{(2\pi)^2} \int_{\Sigma_{ \{\alpha, 0\} } } \int_{\Sigma_{\{\mu_{(i)}\}_{i=1}^m} }
		 e^{ m (\FF_m(z;x)-\FF_m(w;x))}
	\frac{(z-\alpha)w e^{m^{p}((s+\eta) w-(s+\xi) z)}}{(w-\alpha)z(w-z)}   dz dw
\end{split}
\end{equation}
where
\begin{equation}\label{eq:K1-3}
\begin{split}
	\FF_m(z;x)
	&:= -\frac1{m} \sum_{i=1}^m \log (\mu_{(i)}-z)
	+ \frac{n}{m} \log z  - x z. %\\
%	&= - \int \frac{dH_m(y)}{y-z} + \frac{n}{m} \log z  -x z.
\end{split}
\end{equation}
%Note that each term of $\FF_m$ are of order $O(1)$. We do not include terms of smaller order here.
%Keep in mind that $x$ and $\gamma\in (0,1)$ are to be determined.

%\bigskip

%We now discuss how the scaling of $L(m,n)$ is determined.
%We evaluate the above double integral asymptotically using  the method of steepest-descent.

Now,
\begin{equation}\label{eq:K1-6}
\begin{split}
	\frac{d}{dz} \FF_m(z;x)=\FF_m'(z;x) &= \frac1{m} \sum_{i=1}^m \frac1{\mu_{(i)}-z} + \frac{n/m}{z} - x
	%=  \int \frac{dH_m(y)}{y-z} + \frac{n/m}{z}  -x
	= \ell_m(z)- x , \\
	\frac{d^2}{dz^2} \FF_m(z;x)=\FF_m''(z;x) & = \frac1{m} \sum_{i=1}^m \frac1{(\mu_{(i)}-z)^2}- \frac{n/m}{z^2}
	%=  \int \frac{dH_m(y)}{(y-z)^2} - \frac{n/m}{z^2}
	=  \ell'_m(z)
\end{split}
\end{equation}
where $\ell_m(z) := \frac1{m} l_m(z)$ with $l_m(z)$ defined in~\eqref{eq:lz}.
%\begin{equation}\label{eq:K1-6.5}
%\begin{split}
%	 \ell_m(z) := \frac1{m} \sum_{i=1}^m \frac1{\mu_{(i)}-z} + \frac{n/m}{z}= \frac1{m} l_m(z).
%\end{split}
%\end{equation}
Some examples of the graphs of $\ell_m(z)$ are in Figure~\ref{fig:functionlfirst} in Section~\ref{sec:main results}.
{It is easy to check that $\FF_m''$ is a strictly  increasing function in the interval $z\in (0, \mu_{(m)})$ from $-\infty$ to $+\infty$, 
and hence $\FF_m'$ has a unique minimizer in the interval.}
From the definition~\eqref{eq:solvecrit} of $\lambda_m$, we see that
$\crit_m= argmin_{z\in (0, \mu_{(m)})} \FF'_m(z; x)$.
Note that $\lambda_m$ is  independent of $x$.

The critical points of $\FF_m$ (i.e. the roots of $\FF'_m$) play a key role. % in the method of steepest-descent.
We note that since $\FF'_m$ is convex in the interval $(0, \mu_{(m)})$, %in that interval,
the number of real roots of $\FF'_m$ in $(0, \mu_{(m)})$ is $2$, $1$, and $0$
if $x- \ell_m(\crit_m)>0$, $x- \ell_m(\crit_m)=0$, and $x- \ell_m(\crit_m)<0$, respectively.

%If there are no real roots of $\FF'_m$
%in $(0, \mu_{(m)})$, then $\FF_m'$ has complex roots.

\subsection{Case (a)}

We are yet to determine $x$ and $p$.
Here is a heuristic argument how we determine $x$.
Note  the exponential term $e^{m(\FF_m(z;x)-\FF_m(w;x))}$ in the double integral~\eqref{eq:K1-2}.
If we can deform the contours to the path of steepest-descent for $\FF_m(z;x)$ and the path of steepest-descent for $-\FF_m(w;x)$, and use the method of steepest-descent, the leading contribution to the double integral becomes $e^{m(\FF_m(z_c;x)-\FF_m(w_c;x))}$ where
$z_c$ and $w_c$ are the critical points.
(It can be shown that since we need to deform the original contours to the new contours analytically, the critical points $z_c$ and $w_c$ should be in the strip $0<Re(z)<\mu_{(m)})$.)
Note that the path of steepest-descent for $-\FF_m(w;x)$ is the path of steepest-ascent for $\FF_m(w;x)$.
Hence $z_c$ and $w_c$ are both critical points of the same function $\FF_m(z;x)$.
Now unless $\FF_m(z_c;x)=\FF_m(w_c;x)$, the leading term is not of order $O(1)$.
This suggests that we should have $z_c=w_c$,
which is attained if $\FF_m$ has a unique critical point in $(0, \mu_{(m)})$. From the discussion in the last paragraph of the previous section, this happens if we take $x=\ell_m(\lambda_m)$.

We now set
\begin{equation}\label{eq:xdefa}
	x=\ell_m(\crit_m)
\end{equation}
and show that the application of the method of steepest-descent to~\eqref{eq:K1-4} indeed yields the desired asymptotic result.
In this case, from the discussion at the end of the last subsection, there is only one real root of $\FF'_m$ in $(0, \mu_{(m)})$, given by $z=\crit_m$, and hence since $\FF'_m$ is convex in the same interval, this implies that $\FF''_m(\crit_m; x)=0=\FF_m'(\crit_m;x)$.
%$\FF''_m(\crit_m; x)=0$.
%As $\crit_m$ is the unique critical point of $\FF_m(z;x)$ in
%the interval $(0, \mu_{(m)})$,
%$\FF_m'(\crit_m;x)=\FF''_m(\crit_m; x)=0$.
Via the same approach as in (\ref{eq:K1-6}), it is straightforward to check that $\FF_m'''(z;x)>0$ in $z\in (0, \mu_{(m)})$, and hence especially
$\FF_m'''(\crit_m;x)>0$.

Let $\Gamma_1$ be the path of the steepest-decent of $\FF_m(z;x)$ passing through the point $\crit_m$.
%This curve satisfies the constant-phase condition that  $Im (\FF_m(z;x)-\FF_m(\crit_m; x))=0$.
Since $\FF_m'(\crit_m;x)=\FF_m''(\crit_m;x)=0$ and $\FF_m'''(\crit_m)>0$,
we have $\FF_m(z;x)-\FF_m(\crit_m; x)= c (z-\crit_m)^3+O(|z-\crit_m|^4)$ for $c>0$ locally for $z$ near $\crit_m$.
Hence starting at $z=\crit_m$, there are three directions, given by the angles $\pi/3$, $\pi$, $-\pi/3$ about the positive real line,
to which the real part of $\FF_m(z;x)$ decreases most rapidly, i.e. the direction of the steepest-descent.
We use the path which goes off $\crit_m$ at the angles $\pi/3$ and $-\pi/3$ in the complex plane.
Since the paths of steepest-descent and steepest-ascent are given by the constant-phase condition
(equivalently, given by the integral curves of the vector field $\overline{\FF_m'(z)}$), the  full curve satisfies the equation $Im (\FF_m(z;x)-\FF_m(\crit_m; x))=0$.
See the solid curve in Figure~\ref{fig:steep1} for an example of the general shape of the path of steepest-descent.
The path of steepest-ascent can also be obtained in a similar way and its general shape is indicated by the dashed curve in Figure~\ref{fig:steep1}.
This curve goes off $\crit_m$ at the angles $2\pi/3$ and $-2\pi/3$.

\begin{figure}[htbp]
\centering
\includegraphics[scale=0.6]{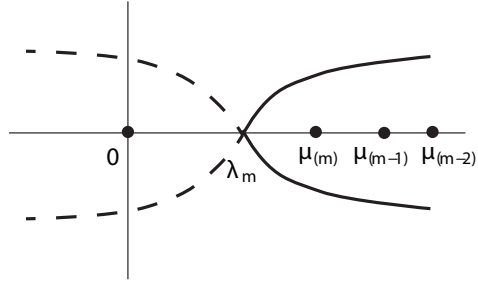}
\caption{General shape of the path of steepest-descent, $\Gamma_1$, (solid) and the path of steepest-ascent , $\Gamma_2$, (dashed) of
$\FF_m$ passing through the double critical point $z=\lambda_m$}
\label{fig:steep1}
\end{figure}

We deform the contour $\Sigma_{\{\mu_{(i)}\}_{i=1}^m}$ for $z$ to $\Gamma_1$ and the contour $\Sigma_{ \{\alpha, 0\} }$ for $w$ to $\Gamma_2$. (We orient the new contours consistent with the original contours.)
During this deformation, we need to be careful of the poles of the integrand.
The poles are $z=\mu_{(i)}$, $w= \alpha, 0$, and $z=w$.
Since we assume that (as we are in the case (a))
\begin{equation}\label{eq:assm1}
	\alpha<\liminf_{m\to\infty} \crit_m, \qquad \liminf_{m\to \infty} (\mu_{(m)}-\crit_m)>0,
\end{equation}
we see that we can  deform the contours to $\Gamma_1$ and $\Gamma_2$ without passing through
the poles $z=\mu_{(i)}$ and $w=\alpha, 0$.
About the pole $z=w$, even though the new contours meet at $z=w=\crit_m$,
we can modify the contours locally near the critical point as follows.
It can be shown that if we take the contours $\Gamma_1$ and $\Gamma_2$ be $\crit_m \pm O(m^{-1/3})$, respectively, near the critical point,
the pole $z=w$ does not contribute but the method of  steepest-descent still applies (see e.g. \cite{Baik-Ben_Arous-Peche05})

Since the critical point $z=\lambda_m$ is away from the poles due to~\eqref{eq:assm1},
the main contribution to the double integral comes from a small neighborhood of the critical point.
By localizing the integral near the critical point and expanding the exponent in a Taylor series,
we find the leading asymptotic term of $\KK_x(\xi, \eta)$:
%Then the kernel becomes
\begin{equation}\label{eq:assm2}
	\KK_x(\xi, \eta)
	\approx \frac{-m^{p}}{(2\pi)^2} \int \int
	e^{\frac{m}{3!}\FF_m'''(\crit_m;x)((z-\crit_m)^3-(w-\crit_m)^3)}
	\frac{e^{m^{p}((s+\eta) w-(s+\xi) z)} }{w-z}  dz dw,
\end{equation}
{
where the expression $f\approx g$ means that $\frac{f}{g}\to 1$ as $m\to\infty$ throughout the paper.}
Here we used the fact that $\frac{(z-\alpha)w}{(w-\alpha)z}=1$ at $z=w=\lambda_m$.
%\begin{equation}\label{eq:assm2}
%	\KK_x(\xi, \eta)	\approx \frac{-m^{p}}{(2\pi)^2} \int \int	e^{\frac{m}{3!}\FF_m'''(\crit_m;x)((z-\crit_m)^3-(w-\crit_m)^3)} \frac{(z-\alpha)w e^{m^{p}((s+\eta) w-(s+\xi) z)} }{(w-\alpha)z(w-z)}  dz dw.
%\end{equation}
Changing the variables %$z\mapsto u$, $w\mapsto v$by
$u:=c_m(z-\crit_m)$ and $v:=c_m(w-\crit_m)$ where $c_m:= (\frac{m}2\FF_m'''(\crit_m;x))^{1/3}=O(m^{1/3})$,
the above becomes
\begin{equation}\label{eq:assm3}
\begin{split}
	\KK_{x}(\xi, \eta)
	\approx
	&e^{-m^{p}\lambda_m \xi}
	\bigg[-  \frac{\frac{m^{p}}{c_m}}{(2\pi)^2} \int \int
	e^{\frac1{3}u^3-\frac13 v^3}
	\frac{e^{\frac{m^{p}}{c_m}((s+\eta) v-(s+\xi) u)} }{u-v} du dv \bigg]
	e^{m^{p}\lambda_m \eta}. %\\
%	=: &e^{-m^{p}\lambda_m \xi} \AAA_s(\xi, \eta) e^{m^{p} \lambda_m \eta}.
\end{split}
\end{equation}
{Here the contour for $u$ is from $e^{\pi i/3}\infty$ to $e^{-\pi i/3} \infty$, and the contour for $v$
is from $e^{2\pi i/3}\infty$ to $e^{4\pi i/3}\infty$ such that the first is to the right of the second: see Figure~\ref{fig:parabola}.}
\begin{figure}[htbp]
\centering
\includegraphics[scale=0.2]{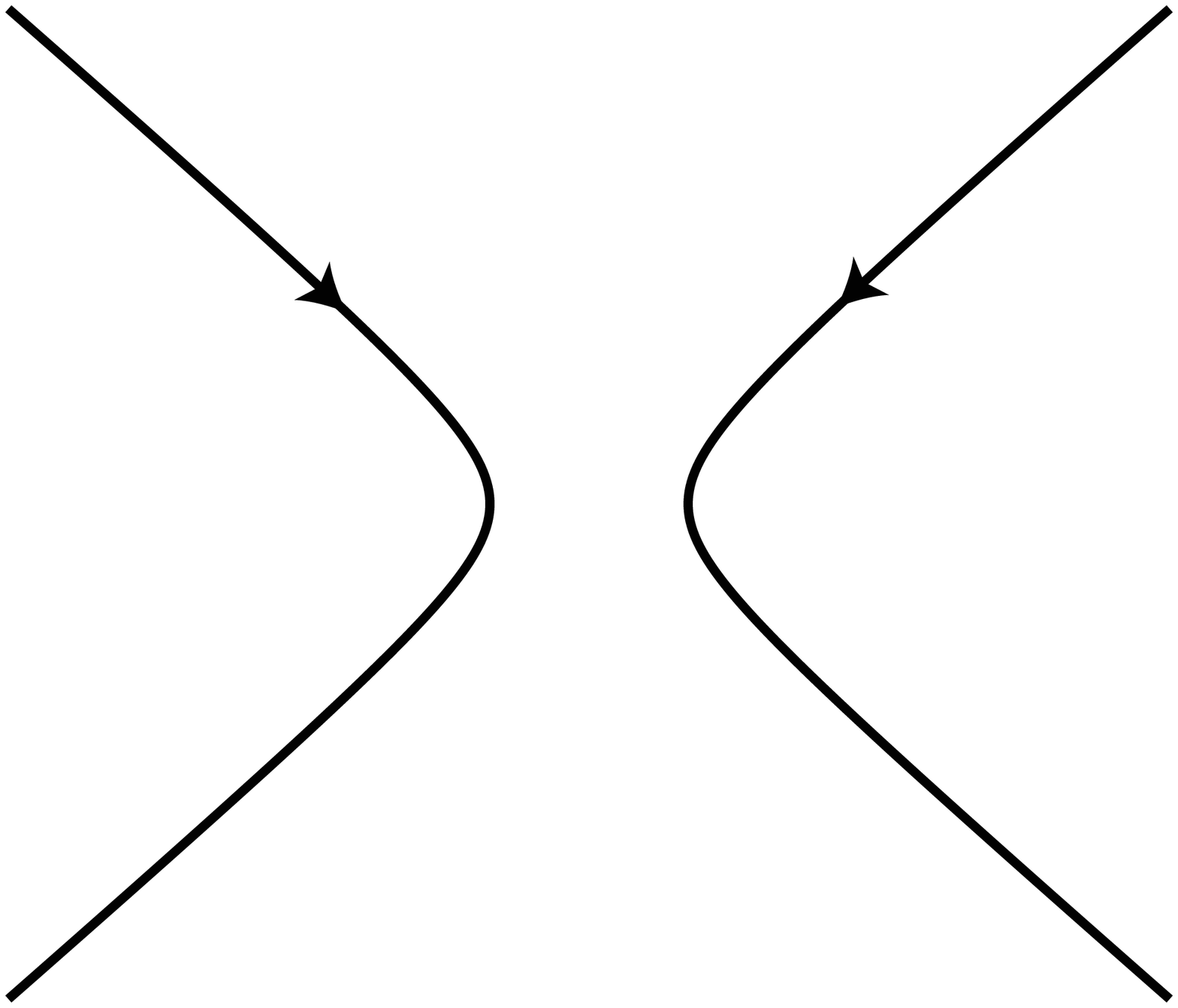}
\caption{Contours for~\eqref{eq:assm3}}
\label{fig:parabola}
\end{figure}
Since $c_m= O(m^{1/3})$, the term in the bracket is $O(1)$ if we take
\begin{equation}
	p= \frac13.
\end{equation}
This is how the parameter $p$ is determined.

Now, define the Airy kernel
\begin{equation}\label{eq:assm4.1}
\begin{split}
	\AAA_s(\xi, \eta)
	:= -  \frac{1}{(2\pi)^2} \int \int
	e^{\frac1{3}u^3-\frac13 v^3}
	\frac{e^{(s+\eta) v-(s+\xi) u} }{u-v} du dv
\end{split}
\end{equation}
and the Airy operator $\AAA_s$ on $L^2((0, \infty))$ defined by the above kernel.
Hence we have obtained
\begin{equation}\label{eq:assm4.1}
\begin{split}
	\KK_{x}(\xi, \eta)
	\approx
	&e^{-m^{1/3}\lambda_m \xi} \bigg[ \frac{m^{1/3}}{c_m} \AAA_{\frac{m^{1/3}s}{c_m}}\big(\frac{m^{1/3}}{c_m}\xi, \frac{m^{1/3}}{c_m}\eta \big) \bigg]
	e^{m^{1/3}\lambda_m \eta}.
\end{split}
\end{equation}
{By mimicking the arguments in Section 3 of \cite{Baik-Ben_Arous-Peche05} (see also \cite{El_Karoui07}), we can show that the difference between the left-hand side and the right-hand side, and its derivative, tends to zero uniformly.} This allows us to establish that the operator $\KK_{x}$ converges to the operator with the above kernel in trace norm. Since these arguments are `standard' and rather tedious, while providing no additional insight, they provide no archival value to this paper and so we do not copy them here. Having established convergence of $\KK_{x}$, we obtain, using the invariance of the Fredholm determinants under scaling and conjugation,
\begin{equation}\label{eq:assm4-1}
	\det(1-\KK_{x})
		\approx \det(1-\AAA_{\frac{m^{1/3}s}{c_m}}). %= \mathbb{P}(TW_2\le s) .
\end{equation}
From~\eqref{eq:K1-5}, the left-hand side equals $\mathbb{P} \big\{ L(m,n)\le m  \ell_m(\crit_m)+ m^{1/3}s \big\}$.
%that (see~\eqref{eq:K1-5})
%\begin{equation}\label{eq:assm4-1}
%	\mathbb{P} \bigg\{ \frac{L(m,n)-m  \ell_m(\crit_m)}{m^{1/3}} \le s \bigg\}
%	\approx \det(1-\AAA_{\frac{m^{1/3}s}{c_m}}). %= \mathbb{P}(TW_2\le s) .
%\end{equation}
Therefore, after changing $s$ to $\frac{c_m}{m^{1/3}}s$, we obtain
\begin{equation}\label{eq:assm4}
	\mathbb{P} \bigg\{ \frac{L(m,n)-m  \ell_m(\crit_m)}{(\frac{m}2\FF_m'''(\crit_m;x))^{1/3} }
	\le s \bigg\}
	\approx \det(1-A_s) = \mathbb{P}(\TWC\le s).
\end{equation}
The last equality is one of the definitions of the Tracy-Widom distribution \cite{Tracy-Widom94}.
Since $\ell_m(z)=\frac{1}{m} l_m$, we conclude that
\begin{equation}\label{eq:assm5}
	\frac{L(m,n)-l_m(\crit_m)}{(\frac{1}2 l_m''(\crit_m;x))^{1/3}}
	 \overset{\mathcal{D}}{\longrightarrow}  \TWC,
\end{equation}
and hence Theorem~\ref{th:main2} (a) is proved.

\subsection{Case (b)}

In this case the assumption is that
\begin{equation}\label{eq:assm7}
	\limsup_m \crit_m < \alpha.
\end{equation}
We are to determine $x$ and the exponent $p\in (0,1)$ of~\eqref{eq:K1-5}.

If we take $x$ as~\eqref{eq:xdefa} and take the same new contours as in the previous section,
then due to the condition~\eqref{eq:assm7}, the deformation from the original contour  to the new contour for $w$-variable passes through the pole $w=\alpha$.
In this case the leading contribution to the $w$-integral does not come from the critical point $\lambda_m$, but comes from the pole $\alpha$.
Then since $\FF_m(\lambda_m,x)\neq \FF_m(\alpha, x)$, the leading contribution to the double integral is
not $O(1)$.
This means that the choice~\eqref{eq:xdefa} is not suitable for case (b).

Unlike the case (a) where we defined $x$ so that there is a unique critical point of $\FF_m$ in $(0, \mu_{(m)})$, we now assume that we have $x>\ell_m(\lambda_m)$ so that there are
two real critical values $z_c^{-}<z_c^+$ in $(0, \mu_{(m)})$.
Note that $z_c^{\pm}$ depend on $x$, which is to be determined. %=z_c^-(x)$ and $z_c^+=z_c^+(x)$.
As $\FF_m'(z;x)= \ell_m(z)-x$ and $\ell_m(z)$ is convex in $(0, \mu_{(m)})$ (see the last three paragraphs of Section~\ref{sec:cr}),
we see that $z_c^-\in (0, \lambda_m)$ and $z_c^+\in (\lambda_m, \mu_{(m)})$.
(Recall that the definition of $\lambda_m$ does not involve $x$.)
It is easy to check that the path of steepest-descent of $\FF_m$ passing through the critical point $z_c^+$
(the future contour for the $z$-integral) is locally a vertical line near $z_c^+$ and the path of steepest-ascent of $\FF_m$ (the future contour for the $w$-integral)  passing through  the critical point $z_c^-$
is locally a vertical linear near $z_c^-$.
The general shape of these paths are shown in Figure~\ref{fig:steep12}.
If we deform the original contours to these contours, the deformation of the $w$-integral, whose original contour is $\Sigma_{\{0, \alpha\}}$, passes the pole $\alpha$ due to the
condition~\eqref{eq:assm7}.
\begin{figure}[htbp]
\centering
\includegraphics[scale=0.6]{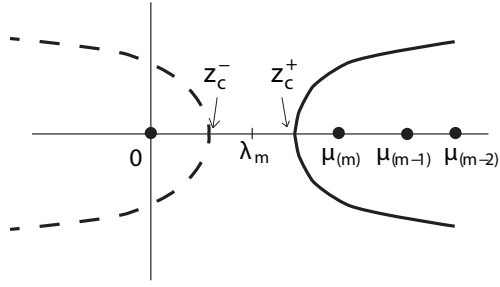}
\caption{General shape of the path of steepest-descent of $\FF_m$ passing through the point $z_c^+$(solid) and
the general shape of the path of steepest-ascent passing through the point $z_c^-$ (dashed)}
\label{fig:steep12}
\end{figure}

With this in mind, before applying the method of steepest-descent,
we first deform the contours of the double integral~\eqref{eq:K1-5} so that
so that $w=\alpha$ is outside of the contour for $w$.
By evaluating the residue at $w=\alpha$, we find {
\begin{equation}
	\KK_x(\xi, \eta)=  I+J
\end{equation}}
where
\begin{equation}\label{eq:assm8-010}
\begin{split}
	I =  \bigg[ \frac{-m^{p}\alpha}{2\pi i} \int_{\Sigma_{\{\mu_{(i)}\}_{i=1}^m} }
		 e^{ m \FF_m(z;x)}
	\frac{e^{-m^{p} (s+\xi) z} }{z}  dz \bigg]
	e^{-m\FF_m(\alpha; x)+m^{p}(s+\eta)\alpha}
\end{split}
\end{equation}
and
\begin{equation}\label{eq:assm8-020}
\begin{split}
	J= \frac{-m^{p}}{(2\pi)^2} \int_{\Sigma_{ \{0\} } } \int_{\Sigma_{\{\mu_{(i)}\}_{i=1}^m} }
		 e^{ m (\FF_m(z;x)-\FF_m(w;x))}
	\frac{(z-\alpha)w e^{m^{p}((s+\eta) w-(s+\xi) z)}}{(w-\alpha)z(w-z)}   dz dw.
\end{split}
\end{equation}
Note that the contour for $w$ in $J$ contains $0$ inside, but $\alpha$ outside.
Now we deform the contours to the paths of steepest-descent/steepest-ascent described above.
Then the leading term of $J$
is $e^{m(\FF_m(z_c^+(x);x)-\FF_m(z_c^-(x);x))}$.
But since $\FF_m'$ is convex in $(0, \mu_{(m)})$, we see that $\FF_m(z_c^+(x);x)-\FF_m(z_c^-(x);x)<0$.
Thus the double integral is exponentially small for any choice of $x>\ell_m(\lambda_m)$. 
{Hence $J=O(e^{-cm})$ for some constant $c>0$.}

On the other hand, the integral in $I$
has the leading term $e^{m\FF_m(z_c^+(x);x)}$.
If this term is same as $e^{-m\FF_m(\alpha; x)}$, then $I=O(1)$.
This is achieved if $z_c^+(x)=\alpha$, which means that $\alpha$ is one of the critical points of $\FF_m(z;x)$, i.e. $\ell_m(\alpha)=x$. We choose $x$ to satisfy
\begin{equation}\label{eq:conditionforxb}
	x= \ell_m(\alpha).
\end{equation}
With the above choice of $x$, the method of steepest-descent yields that
\begin{equation}\label{eq:assm9}
\begin{split}
	I \approx
	\bigg[\frac{-m^{p}}{2\pi i} \int e^{ \frac12 m \FF_m''(\alpha;x)(z-\alpha)^2}
	e^{-m^{p} (s+\xi) z} dz \bigg]
	e^{m^{p}(s+\eta)\alpha}
\end{split}
\end{equation}
where the integral is localized near the critical point $z_c^+=\alpha$.
Changing the variables $u:= \sqrt{m\FF_m''(\alpha;x)}(z-\alpha)$, we see that
the two exponents in the integral are balanced if we take
\begin{equation}
	p=\frac12.
\end{equation}
With this choice of $p$, we find that %the first term of~\eqref{eq:assm8}
\begin{equation}\label{eq:assm10}
\begin{split}
	e^{m^{1/2} \alpha\xi}   Ie^{-m^{1/2}\alpha\eta}
	\approx \frac{-1}{2\pi i\beta} \int  e^{ \frac12 u^2}
	e^{-\frac1{\beta} (s+\xi) u}  du
	=  \frac{1}{\sqrt{2\pi}\beta}
	e^{-\frac1{2\beta^2}(s+\xi)^2} ,
\end{split}
\end{equation}
and hence
\begin{equation}\label{eq:assm10}
\begin{split}
	I\approx e^{-m^{1/2} \alpha\xi}  \bigg[ \frac{-1}{2\pi i\beta} \int
		 e^{ \frac12 u^2}
	e^{-\frac1{\beta} (s+\xi) u}  du \bigg] e^{m^{1/2}\alpha\eta}
	= e^{-m^{1/2} \alpha\xi}  \bigg[ \frac{1}{\sqrt{2\pi}\beta}
	e^{-\frac1{2\beta^2}(s+\xi)^2} \bigg] e^{m^{1/2}\alpha\eta}.
\end{split}
\end{equation}
{Thus, we find that $\KK_x\approx I$.} Using the invariance of determinants under conjugations, we find that
\begin{equation}\label{eq:assm12}
\begin{split}
	\mathbb{P} \bigg\{ \frac{L(m,n)-m  \ell_m(\alpha)}{m^{1/2}} \le s \bigg\}
	=\det(1-\KK_x)
	\approx \det(1-\GGG)
\end{split}
\end{equation}
where $\GGG$ is an operator on $L^2((0,\infty))$ defined by the kernel
$\GGG(\xi, \eta)=  g(\xi)= \frac{1}{\sqrt{2\pi}\beta} e^{-\frac1{2\beta^2}(s+\xi)^2}$.
Since $\GGG(\xi, \eta)$ does not depend on $\eta$, it is easy to check that the only eigenfunction of $\GGG$ is $g(\eta)$ with the eigenvalue $\int_0^\infty g(\eta)d\eta$.
Hence the determinant
\begin{equation}\label{eq:assm12-1}
\begin{split}
	\det(1-\GGG)= 1- \int_0^\infty g(\eta) d\eta =
	 \frac1{\sqrt{2\pi}\beta} \int_{-\infty}^s e^{-\frac1{2\beta^2} \eta^2} d\eta,
\end{split}
\end{equation}
which is the cdf of a normal distribution.
Therefore, changing $s\mapsto \beta s$, we obtain
\begin{equation}\label{eq:assm14}
\begin{split}
	\frac{L(m,n) - l_m(\alpha)}{\sqrt{l_m'(\alpha)}}
	\approx  \mathcal{N}(0,1)
\end{split}
\end{equation}
and Theorem~\ref{th:main2} (b) is proved.

\subsection{Case (c)}

The assumptions are
\begin{equation}\label{eq:assm16}
	\limsup_{m\to \infty} ( \mu_{(m)} -\crit_m)= 0,
	\qquad \liminf_{m\to \infty} (\mu_{(m-1)}-\mu_{(m)})>0,
\end{equation}
%\begin{equation}\label{eq:assm17}
%	\liminf_{m\to \infty} (\mu_{(m-1)}-\mu_{(m)})>0,
%\end{equation}
and
\begin{equation}\label{eq:assm16.5}
	\displaystyle{\liminf_{m\to\infty}}  (\lambda^{(1)}_m-\mu_{(m)}) >0.
\end{equation}
%For simplicity of presentation, we first assume that
%\begin{equation}\label{eq:assm17}
%	\liminf_{m\to \infty} (\mu_{(m-1)}-\mu_{(m)})>0.
%\end{equation}
The case when $\mu_{(m)}=\cdots=\mu_{(m-r+1)}$  in Remark~\ref{rmk:fluctuationc} will be discussed at the end.

Due to the condition~\eqref{eq:assm16}, one of the real critical points (assuming that $x>\ell_m(\lambda_m)$ again) becomes close to the pole $\mu_{(m)}$ and hence
we cannot argue that the main contribution is localized near the critical point.
%we cannot use the same analysis before.
In this case, we first change the contour $\Sigma_{\{\mu_{(i)}\}_{i=1}^m}$ for the $z$-integral in~\eqref{eq:K1-2}
to $\Sigma_{\{\mu_{(i)}\}_{i=1}^{m-1}}$ which excludes the pole $\mu_{(m)}$.
(Recall that in the previous section, we changed the contour $\Sigma_{\{0, \alpha\}}$ for the $w$-integral to $\Sigma_{\{0\}}$.)
Evaluating the residue at $z=\mu_{(m)}$, we obtain
$\KK_x(\xi, \eta)=L+M$ where
\begin{equation}\label{eq:assm18-010}
\begin{split}
	L=
	 e^{m\FF_m^{(1)}(\mu_{(m)}; x)-m^{p}(s+\xi)\mu_{(m)}}
	\bigg[ \frac{m^{p}(\mu_{(m)}-\alpha)}{2\pi i \mu_{(m)}} \int_{\Sigma_{\{0, \alpha\}} }
		 e^{- m \FF_m^{(1)}(w;x)}
	\frac{w e^{m^{p} (s+\eta) w}}{w-\alpha}    dw \bigg]
\end{split}
\end{equation}
and
\begin{equation}\label{eq:assm18-020}
\begin{split}
	M= \frac{-m^{p}}{(2\pi)^2} \int_{\Sigma_{ \{0, \alpha\} } } \int_{\Sigma_{\{\mu_{(i)}\}_{i=1}^{m-1}}}
		 e^{ m (\FF_m^{(1)}(z;x)-\FF_m^{(1)}(w;x))}
	\frac{(\mu_{(m)}-w)(z-\alpha)w e^{m^{p}((s+\eta) w-(s+\xi) z)} }{(\mu_{(m)}-z)(w-\alpha)z(w-z)}   dz dw,
\end{split}
\end{equation}
where
\begin{equation}\label{eq:assm19}
\begin{split}
	\FF_m^{(1)}(z;x)
	&:= -\frac1{m} \sum_{i=1}^{m-1} \log (\mu_{(i)}-z)
	+ \frac{n}{m} \log z  - x z.
\end{split}
\end{equation}
We set (see~\eqref{eq:lmrde})
\begin{equation}\label{eq:assm20}
\begin{split}
	\ell_m^{(1)}(z):=  \frac1{m} \sum_{i=1}^{m-1} \frac1{\mu_{(i)}-z}
	+ \frac{n/m}{z} = \frac1{m} l^{(1)}(z).
\end{split}
\end{equation}
Then
\begin{equation}\label{eq:assm20-1}
	(\FF_m^{(1)})'(z;x)= \ell_m^{(1)}(z)-x.
\end{equation}

The function $\ell_m^{(1)}(z)$ is convex in $(0, \mu_{(m-1)})$
and $\crit_m^{(1)}$ is defined to be the unique solution of  $(\ell_m^{(1)})'(z)=0$ in $(0, \mu_{(m-1)})$.
As $\ell_m^{(1)}(z)=\ell_m(z)-\frac1{m(\mu_{(m)}-z)}$, we have
$(\ell_m^{(1)})'(\lambda_m)<0$ and hence $\lambda_m^{(1)}> \lambda_m$.
Thus from the condition~\eqref{eq:assm16}, we find that
$\displaystyle{\liminf_{n\to\infty}} (\crit_m^{(1)}-\mu_{(m)})\ge 0$.
The condition~\eqref{eq:assm16.5} implies that indeed here the inequality holds strictly.

Now the rest of the analysis is similar to the previous section with the role of $\alpha$ is now played by $\mu_{(m)}$.
The proper choice of $x$ is now (cf. \eqref{eq:conditionforxb})
\begin{equation}\label{eq:assm22}
	x=\ell_m^{(1)}(\mu_{(m)}).
\end{equation}
Then from~\eqref{eq:assm20-1} and~\eqref{eq:assm16.5}, we find that there are two
real roots of $(\FF_m^{(1)})'(z;x)=0$ in $z\in (0, \mu_{(m-1)})$.
The smaller root is $\mu_{(m)}$, and we denote the other root by  $z_0\in (\mu_{(m)}, \mu_{(m-1)})$.
The path of steepest-descent of $\FF_m^{(1)}$ passing through $z_0$ and the path of steepest-ascent passing through $\mu_{(m)}$ are of shape in Figure~\ref{fig:steep3}.
%We evaluate the integrals in~\eqref{eq:assm18} using these paths.
\begin{figure}[htbp]
\centering
\includegraphics[scale=0.6]{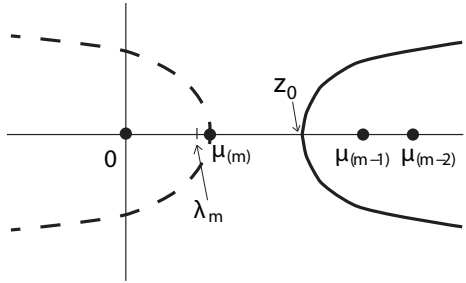}
\caption{General shape of the path of steepest-descent of $\FF_m^{(1)}$ passing through the point $z_0$(solid) and
the general shape of the path of steepest-ascent passing through the point $\mu_{(m)}$ (dashed)}
\label{fig:steep3}
\end{figure}

Since $(\FF_m^{(1)})''(z;x)\neq 0$ at $z=\mu_{(m)}$ and at $z=z_0$, we are lead to choose $p=\frac12$.
%\begin{equation}
%	p=\frac12.
%\end{equation}
With this choice, the evaluation of $L$ is similar to that of $I$ in the previous section and we obtain
\begin{equation}\label{eq:assm23}
\begin{split}
	L\approx e^{-m^{1/2} \mu_{(m)}\xi}  \bigg[ \frac{1}{\sqrt{2\pi}\beta}
	e^{-\frac1{2\beta^2}(s+\eta)^2} \bigg] e^{m^{1/2}\mu_{(m)}\eta},
	\quad \beta:= \sqrt{-(\ell_m^{(1)})'(\mu_{(m)})}.
\end{split}
\end{equation}

On the other hand, the method of steepest-descent implies that
%second term in~\eqref{eq:assm18} is of order
$M$ is of order $O(e^{m(\FF_m^{(1)}(z_0;x)-\FF_m^{(1)}(\mu_{(m)};x))})$.
Since $\ell_m^{(1)}$ is convex in $z\in (0, \mu_{(m-1)})$,
the exponent is negative and hence the second term is exponentially small.
This argument works without any problem if $\liminf_m(\mu_{(m-1)}-z_0)>0$.
(Note that the double integral has a pole at $z=\mu_{(m-1)}$.)
Even if $\limsup_m(\mu_{(m-1)}-z_0)=0$,
we can still show that the second term is exponentially small.
This is because  $\FF_m^{(1)}(z;x)$ is decreasing as $z$ increases from $\mu_{(m)}$ to $z_0$
and hence it is possible to choose a $z$-contour which passes the real axis in-between $\mu_{(m)}$ and $z_0$.
We skip the details.

Therefore, we find that
\begin{equation}
\begin{split}
	\mathbb{P} \bigg\{ \frac{L(m,n)-m  \ell_m(\mu_{(m)})}{m^{1/2}} \le s \bigg\}
	=\det(1-\KK_x)
	\approx \det(1-\GGG)
\end{split}
\end{equation}
where $\GGG$ is an operator on $L^2((0,\infty))$ defined by the kernel
$\GGG(\xi, \eta)=  g(\xi)= \frac{1}{\sqrt{2\pi}\beta} e^{-\frac1{2\beta^2}(s+\xi)^2}$
as in~\eqref{eq:assm12} with the new $\beta$ defined in~\eqref{eq:assm23}.
From~\eqref{eq:assm12-1}, we find, after changing $s\mapsto \beta s$, that
%we obtain, with $l_m^{(1)}:=m\ell_m^{(1)}$,
\begin{equation}\label{eq:assm24}
\begin{split}
	\frac{L(m,n) - l_m^{(1)}(\mu_{(m)})}{\sqrt{-(l_m^{(1)})'(\mu_{(m)})}}
	\approx  \mathcal{N}(0,1)
\end{split}
\end{equation}
and Theorem~\ref{th:main2} (c) is proved.

\medskip

Finally we discuss Remark~\ref{rmk:fluctuationc}.
If $\mu_{(m)}= \cdots= \mu_{(m-r)}$ for some $r$ independent of $m$ and
$\displaystyle{\liminf_{m\to \infty}}(\mu_{(r+1)}-\mu_{(r)})>0$
as in Remark~\ref{rmk:fluctuationc}, the pole $z=\mu_{(m)}$ is not simple but is of order $r$.
This changes the formula of $L$ and $M$ in~\eqref{eq:assm18-010} and~\eqref{eq:assm18-020}. However the rest of the analysis is same except that in the end the limit of $L$ is of different form.
The limit is $\det(1-\GGG^{(r)})$ where the kernel of the operator $\GGG^{(r)}$ is a certain rank $r$ generalization of $\GGG(\xi, \eta)$.
This kernel appeared in \cite{Baik-Ben_Arous-Peche05} in which it was shown that $\det(1-\GGG^{(r)})$ is the distribution function of the largest eigenvalue of $r\times r$ matrix from the Gaussian unitary ensemble.

\section{Appendix. Basics of the method of steepest-descent}

The asymptotic analysis of the kernel $K$ in Section~\ref{sec:proof2} was done by using the method of steepest-descent. 
For the benefit of the unfamiliar readers, here we briefly discuss some basics of the method of steepest-descent. 

We first consider the Laplace method. The method of steepest-descent can be thought of as the Laplace method for complex functions.  
Suppose we are interested in the large $t$ asymptotics of the integral $\int_{\R} e^{t f(x)} dx$ where $f:\R\to \R$ is a smooth real function. 
Assume, furthermore, that $f$ has a unique maximum value obtained at $x=x_c$ 
(hence $f'(x_c)=0$), and $f''(x_c)\neq 0$. 
Then necessarily $f''(x_c)<0$, and for $x$ close to $x_c$, $f(x)= f(x_c)+\frac12 f''(x_c)(x-x_x)^2+O((x-x_c)^3)$. 
Since $f$ has a unique maximum, we can choose a small interval $I=[x_c-\epsilon, x_c+\epsilon]$ around $x_c$ so that the value of the function $f(x)$ for $x$ outside this interval is strictly less than the minimum of $f(x)$, $x\in I$. 
When $t\to \infty$, the value of $e^{tf(x)}$ for $x\in\R\setminus I$ is exponentially smaller than $e^{tf(x)}$, $x\in I$. 
From this, we can imagine that 
\begin{equation}\label{eq:A1}
	\int_{\R} e^{t f(x)} dx \approx \int_{I} e^{t f(x)} dx.
\end{equation}
This can be shown easily under some mild extra conditions on the behavior of $f(x)$ as $|x|\to\infty$. 
Now in the small interval $I=[x_c-\epsilon, x_c+\epsilon]$, we may approximate $f$ by its Taylor expansion $f(x_c)+\frac12 f''(x_c)(x-x_x)^2$, and hence it is plausible to expect that 
\begin{equation}\label{eq:A2}
	\int_{I} e^{t f(x)} dx \approx \int_{x_c-\epsilon}^{x_c+\epsilon} e^{tf(x_c)+\frac{t}2 f''(x_c)(x-x_x)^2}dx .
\end{equation}
Indeed this can be shown to be true for very general functions $f$. 
Finally, the right-hand side can be approximated by the integral over $\R$, 
\begin{equation}\label{eq:A3}
	\int_{x_c-\epsilon}^{x_c+\epsilon} e^{tf(x_c)+\frac{t}2 f''(x_c)(x-x_x)^2 }dx \approx  e^{tf(x_c)} \int_{-\infty}^{\infty} e^{\frac{t}2 f''(x_c)(x-x_x)^2 }dx.
\end{equation}
because the last integral over $\R\setminus I$ is $O(e^{-ct})$ for some $c>0$.
Combining~\eqref{eq:A1},~\eqref{eq:A2}, and~\eqref{eq:A3}, and evaluating the Gaussian integral (note that $f''(x_c)<0$), we obtain
\begin{equation}\label{eq:A4}
	\int_{\R} e^{tf(x)}dx \approx  e^{tf(x_c)} \sqrt{\frac{2\pi}{-tf''(x_c)}}.
\end{equation}

Note that the method above can be applied without change to the integrals $\int_{C} e^{tf(z)}dz$ for a general (smooth) contour $C$ in the complex plane as long as $f(z)$ is real-valued for $z\in C$. 

Now suppose that we would like to evaluate the large $t$ asymptotics of $\int_\Sigma e^{tf(z)}dz$ where $\Sigma$ is a contour in $\C$ and $f:\Sigma\to \C$ is an analytic complex function. 
Write $f(z)=u(z)+iv(z)$ where $u$ and $v$ are real functions. 
Suppose that we were able to find a contour $\Sigma'$ such that the imaginary part of $f$ is a constant on $\Sigma'$: there is $c\in \R$ such that $v(z)=c$ for all $z\in \Sigma'$. 
Then using the Cauchy's theorem, we can deform the contour $\Sigma$ to $\Sigma'$ and the integral becomes $e^{itc} \int_{\Sigma'} e^{tu(z)}dz$. 
This integral can be  evaluated asymptotically using the Laplace method if the function $u(z)$ has a unique maximum on $\Sigma'$. 
Thus we need to choose a contour $\Sigma'$ such that the imaginary part $v$ of $f$ is a constant and the real part $u$ of $f$ has a critical point.
Let us now find a condition for such a contour. 
Since $v$ is a constant on the contour, its derivative along the contour is zero. 
And hence at the critical point $z_c$ of $u$, we should have $f'(z_c)=0$ due to the Cauchy-Riemann equations. 
Thus the desired contour $\Sigma'$ should pass through the critical point of $f$ (in the complex sense). 
Hence we first find the critical point(s) $z_c$ of $f$ and then choose the contour $\Sigma'$ passing through $z_c$ given by $v(z)=constant=v(z_c)$. 
Since the level curves of $u$ and $v$ are orthogonal (due to the Cauchy-Riemann equation), $u(z)$ decays as $z$ travels away from $z_c$ along $\Sigma'$. 
(And this is the path of the steepest-descent for $u$, and hence the method is called the method of steepest-descent.) 
After we deform the original contour to the new contour, we apply the Laplace method. 

If the function has isolated singularities, then one may need to pass through the singular points whiling deforming the original contour to the curve of steepest-descent. 
In that case the contribution from the singular points may be larger than that from the curve of steepest-descent. 
This happens in case (b) and (c) in the previous section.

When we apply the Laplace method, it may happen that $f''$ also vanishes at the critical point. Then the Gaussian integral~\eqref{eq:A3} is changed to a different  integral. This happens in the previous section. 

\section{Appendix: The $\TWC$ distribution} \label{sec:tw2}
\begin{figure}[h]
\centering
\includegraphics[width=0.9\textwidth]{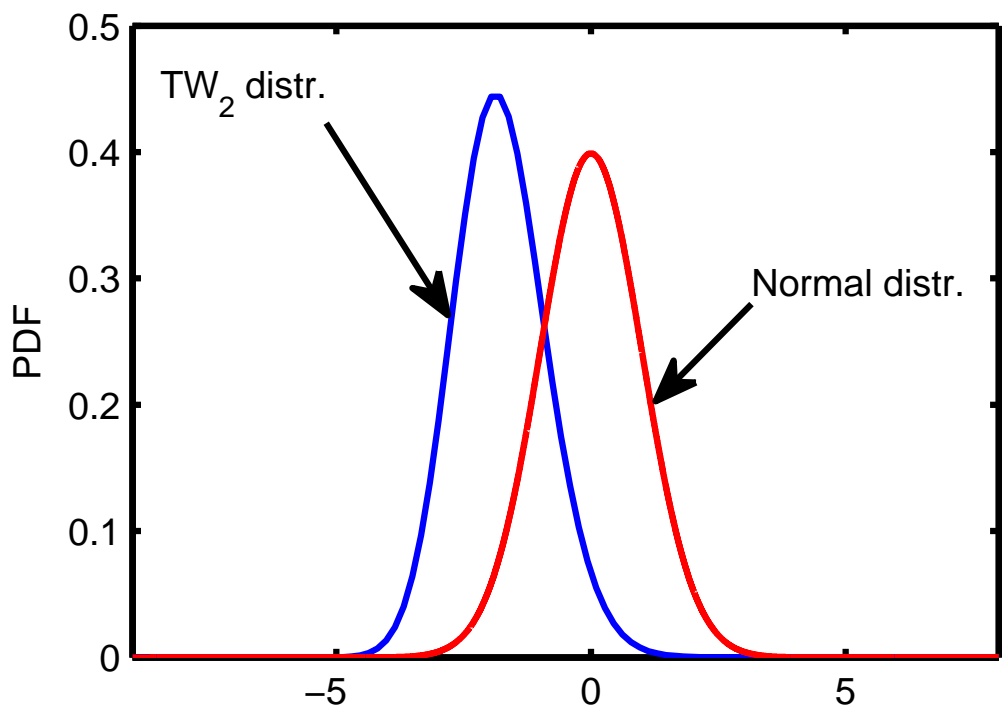}
\caption{The probability density functions of the $\TWC$ and $\mathcal{N}(0,1)$ distributions.}
\label{fig:twnormal}
\end{figure}

The $\TWC$ (or complex Tracy-Widom) distribution can be computed from the solution of the Painlev\'e II equation:
\begin{align} \label{eq:painleve}
q''=sq+2q^3
\end{align}
with the boundary condition
\begin{align} \label{eq:bndcond}
q(s)\sim \mathrm{Ai}(s),
   \qquad\textrm{as }s\rightarrow \infty.
\end{align}
The probability distribution $f_2(s)$, has pdf given by
\begin{equation} \label{eq:f2}
f_2(s)=\frac{d}{ds}F_2(s),
\end{equation}
where
\begin{equation} \label{eq:F2}
F_2(s)=\exp \left( -\int_s^\infty (x-s)q(x)^2\,dx \right).
\end{equation}
These distributions can be readily computed numerically.  See \cite{edelman2005random} for a simple solution and \cite{bornemann2010numerical,bornemann2010survey} for more accurate technique. The $\TWC$ distribution has a mean of $-1.771086807411$ and a variance of $0.8131947928329$. The $\TWC$ density is $O(e^{-\frac{4}{3} |x|^{3/2}})$ as $x\to +\infty$, and is $O(e^{-c|x|^{3}})$ as $x\to -\infty$ for any $c<\frac1{12}$ unlike the Gaussian \cite{Tracy-Widom94}.  Figure \ref{fig:twnormal} compares the `standard'' $\TWC$ distribution, computed using the methods described in \cite{bornemann2010numerical}, to the standard (zero mean, unit variance) normal distribution. Table  \ref{tab:tw quantiles} lists some quantiles of the $\TWC$ distribution that the reader might find useful in the context of hypothesis testing based on the distribution of $L(m,n)$.

See \cite{tracy2009distributions,Akemann-Baik-DiFrancesco11} for additional applications and context where the Tracy-Widom distributions arise.

\begin{table}[ht]
\centering
\begin{tabular}{|c|c|c|}
\hline
$\alpha$ &$1-\alpha$ & $\TWC^{-1}(1-\alpha)$ \\
\hline
0.990000  &0.010000  &-3.72444594640057\\
0.950000  &0.050000  &-3.19416673215810\\
0.900000  &0.100000    &-2.90135093847591\\
0.700000  &0.300000  &-2.26618203984916\\
0.500000  &0.500000  &-1.80491240893658\\
0.300000  &0.700000  &-1.32485955606020\\
0.100000  &0.900000     &-0.59685129711735\\
0.050000  &0.950000  &-0.23247446976400\\
0.010000  &0.990000  &0.47763604739084\\
0.001000  &0.999000  &1.31441948008634\\
0.000100  &0.999900  &2.03469175457082\\
0.000010  &0.999990  &2.68220732168978\\
0.000001 &0.999999  &3.27858828203370\\
\hline
\end{tabular}
\caption{The percentiles of the $\TWC$ distribution computed using software described in \cite{bornemann2010survey}.}
\label{tab:tw quantiles}
\end{table}

\section{Acknowledgements}
J.B.'s work was support in part by NSF grants DMS-1068646.
R.R.N's work was supported in part by NSF grant CCF-1116115. We thank the reviewers and the editors for their careful reading of our manuscript and for their feedback and many valuable suggestions.

%\bibliographystyle{abbrv}
%\bibliography{ref,more_ref}

\begin{thebibliography}{10}

\bibitem{Akemann-Baik-DiFrancesco11}
G.~Akemann, J.~Baik, and P.~Di~Francesco.
\newblock {\em The Oxford Handbook of Random Matrix Theory}.
\newblock Oxford Handbooks in Mathematics. Oxford University Press, 2011.

\bibitem{Baccelli99asymptoticresults}
F.~Baccelli,~A. Borovkov, and J.~Mairesse.
\newblock  {Asymptotic Results on Infinite Tandem Queueing Networks}.
\newblock {\em Probab. Theory Relat. Fields}, 118(3): 365--405, 2000.


\bibitem{Bai-Silverstein09}
Z.~D. Bai and J.~W. Silverstein.
\newblock {\em Spectral Analysis of Large Dimensional Random Matricec}.
\newblock Springer Series in Statistics. Springer, second edition, 2009.

\bibitem{Baik-Ben_Arous-Peche05}
J.~Baik, G.~Ben~Arous, and S.~P{\'e}ch{\'e}.
\newblock Phase transition of the largest eigenvalue for nonnull complex sample
  covariance matrices.
\newblock {\em Ann. Probab.}, 33(5):1643--1697, 2005.

\bibitem{Baik-Silverstein06}
J.~Baik and J.~W. Silverstein.
\newblock Eigenvalues of large sample covariance matrices of spiked population
  models.
\newblock {\em J. Multivariate Anal.}, 97(6):1382--1408, 2006.

\bibitem{Baik-Suidan05}
J.~Baik and T.~M. Suidan.
\newblock A {GUE} central limit theorem and universality of directed first and
  last passage site percolation.
\newblock {\em Int. Math. Res. Not.}, (6):325--337, 2005.

\bibitem{Benaych_Georges-Guionnet-Maida11}
F.~{Benaych-Georges}, A.~{Guionnet}, and M.~{Ma{\"i}da}.
\newblock Fluctuations of the extreme eigenvalues of finite rank deformations
  of random matrices.
\newblock {\em Electron. J. Probab.}, 16(60):1621--1662, 2011.

\bibitem{Benaych_Georges-Nadakuditi11}
F.~Benaych-Georges and R.~R. Nadakuditi.
\newblock The eigenvalues and eigenvectors of finite, low rank perturbations of
  large random matrices.
\newblock {\em Adv. Math.}, 227(1):494--521, 2011.

\bibitem{Bodineau-Martin05}
T.~Bodineau and J.~Martin.
\newblock A universality property for last-passage percolation paths close to
  the axis.
\newblock {\em Electron. Comm. Probab.}, 10:105--112 (electronic), 2005.

\bibitem{bornemann2010numerical}
F.~Bornemann.
\newblock On the numerical evaluation of {F}redholm determinants.
\newblock {\em Math. of Computation}, 79(270):871--915, 2010.

\bibitem{bornemann2010survey}
F.~Bornemann.
\newblock On the numerical evaluation of distributions in random matrix theory: a review.
\newblock  {\em Markov Processes Relat. Fields}, 16:803--866, 2010.


\bibitem{Borodin-Peche08a}
A.~Borodin and S.~P{\'e}ch{\'e}.
\newblock Airy kernel with two sets of parameters in directed percolation and
  random matrix theory.
\newblock {\em J. Stat. Phys.}, 132(2):275--290, 2008.

\bibitem{ciucu2006scaling}
F.~Ciucu, A.~Burchard, and J.~Liebeherr.
\newblock Scaling properties of statistical end-to-end bounds in the network
  calculus.
\newblock {\em Information Theory, IEEE Transactions on}, 52(6):2300--2312,
  2006.

\bibitem{conti2007multihop}
M.~Conti and S.~Giordano.
\newblock Multihop ad hoc networking: {T}he theory.
\newblock {\em IEEE Comm. Magazine}, 45(4): 78--86, 2007.

\bibitem{Corwin12}
I.~Corwin.
\newblock The Kadar-Parisi-Zhang equation and universality class.
\newblock {\em Random Matrices: Theory and Applications}, 1:1130001, 2012.


\bibitem{ding2004peer}
G.~Ding and B.~Bhargava.
\newblock Peer-to-peer file-sharing over mobile ad hoc networks.
\newblock {\em Proc. of the Second IEEE Annual Conference on Perv. Computing and Comms. Workshops},104--108, 2004.


\bibitem{Draief-Mairesse-OConnell}
M.~Draief, J.~Mairesse and N.~O'Connell.
\newblock Queues, stores, and tableaux.
\newblock {\em J. Appl. Probab.} 42(4):1145--1167, 2005.

\bibitem{edelman2005random}
A.~Edelman and N.~R.~Rao
\newblock Random matrix theory.
\newblock {\em Acta Numerica}, 14:233--297, 2005.


\bibitem{El_Karoui07}
N.~El~Karoui.
\newblock Tracy-{W}idom limit for the largest eigenvalue of a large class of
  complex sample covariance matrices.
\newblock {\em Ann. Probab.}, 35(2):663--714, 2007.


\bibitem{Erdelyi}
A.~Erd{\'e}lyi.
\newblock {\em Asymptotic expansions},
\newblock Dover Publications, New York, 1956.


\bibitem{Ferrari-Spohn06}
P.~Ferrari and H.~Spohn.
\newblock Scaling Limit for the Space-Time Covariance of the Stationary Totally Asymmetric Simple Exclusion Process.
\newblock {\em Comm. Math. Phys.}, 256(1):1--44, 2006.
	
\bibitem{Ferrari-Spohn}
P.~Ferrari and H.~Spohn.
\newblock Random Growth Models.
\newblock In {\em The Oxford Handbook of Random Matrix Theory}, edited by
G.~Akemann, J.~Baik, and P.~Di~Francesco, Oxford University Press, 2011.


\bibitem{fidler2006end}
M.~Fidler.
\newblock An end-to-end probabilistic network calculus with moment generating
  functions.
\newblock In {\em Quality of Service, 2006. IWQoS 2006. 14th IEEE International
  Workshop on}, pages 261--270. IEEE, 2006.

\bibitem{friedman1965reduction}
H.~Friedman.
\newblock Reduction methods for tandem queueing systems.
\newblock {\em Operations Research}, 13(1):121--131, 1965.


\bibitem{Glynn-Whitt91}
P.~W. Glynn and W.~Whitt.
\newblock Departures from many queues in series.
\newblock {\em Ann. Appl. Probab.}, 1(4):546--572, 1991.

\bibitem{Hambly-Martin}
B.~Hambly and J.~Martin.
\newblock Heavy tails in last-passage percolation.
\newblock {\em Probab. Theory Related Fields},  137(1-2):227--275, 2007.

\bibitem{harrison1978diffusion}
J.~Harrison.
\newblock The diffusion approximation for tandem queues in heavy traffic.
\newblock {\em Advances in Applied Probability}, 10(4):886--905, 1978.

\bibitem{Johansson00}
K.~Johansson.
\newblock Shape fluctuations and random matrices.
\newblock {\em Comm. Math. Phys.}, 209(2):437--476, 2000.

\bibitem{Johansson02}
K.~Johansson.
\newblock Toeplitz determinants, random growth and determinantal processes,
\newblock in {\em Proceedings of the {I}nternational {C}ongress of
              {M}athematicians, {V}ol. {III} ({B}eijing, 2002)}, pages 53--62, Higher Ed. Press, Beijing, 2002.

\bibitem{Johnstone}
I.~M. Johnstone.
\newblock On the distribution of the largest eigenvalue in principal components analysis.
\newblock {\em Ann. Statist.} 29(2):295--327, 2001.


\bibitem{li2008production}
J.~Li and S.~Meerkov.
\newblock {\em Production systems engineering}.
\newblock Springer Verlag, 2008.

\bibitem{Martin}
J.~Martin.
\newblock Batch queues, reversibility and first-passage percolation.
\newblock {\em Queueing Syst.} 62(4):411--427, 2009.

\bibitem{Mehta04}
M.~L. Mehta.
\newblock {\em Random matrices}, volume 142 of Pure and Applied
  Mathematics (Amsterdam).
\newblock Elsevier/Academic Press, Amsterdam, third edition, 2004.

\bibitem{Miller}
P.~D. Miller.
\newblock {\em Applied asymptotic analysis}, volume 75 of Graduate Studies in Mathematics.
\newblock American Mathematical Society, Providence, 2006.

\bibitem{Nadakuditi-Silverstein10}
R.~R. {Nadakuditi} and J.~W. {Silverstein}.
\newblock Fundamental limit of sample generalized eigenvalue based detection of
  signals in noise using relatively few signal-bearing and noise-only samples.
\newblock {\em J. Sel. Topics in Signal Proc.}, 4:468--480, June 2010.



\bibitem{niu1980bounds}
S.~Niu.
\newblock Bounds for the expected delays in some tandem queues.
\newblock {\em Journal of Applied Probability}, 17(3):831--838, 1980.

\bibitem{niu1981comparison}
S.~Niu.
\newblock On the comparison of waiting times in tandem queues.
\newblock {\em Journal of Applied Probability}, 18(3):707--714, 1981.

\bibitem{OConnell00}
N.~O'Connell.
\newblock {\em Directed percolation and tandem queues},
\newblock HP Labs technical report, HPL-BRIMS-2000-28, 2000.
http://www.hpl.hp.com/techreports/2000/

\bibitem{OConnell03}
N.~O'Connell.
\newblock {\em Random matrices, non-colliding processes and queues}, S\'eminaire de Probabilit\'es, XXXVI, 165--182, Lecture Notes in Math., 1801,
\newblock Springer, Berlin, 2003.

\bibitem{Okounkov01a}
A.~Okounkov.
\newblock Infinite wedge and random partitions.
\newblock {\em Selecta Math. (N.S.)}, 7(1):57--81, 2001.

\bibitem{Paul07}
D.~Paul.
\newblock Asymptotics of sample eigenstructure for a large dimensional spiked
  covariance model.
\newblock {\em Statist. Sinica}, 17(4):1617--1642, 2007.

\bibitem{Prahofer-Spohn02a}
M.~Pr{\"a}hofer and H.~Spohn.
\newblock Current fluctuations for the totally asymmetric simple exclusion
  process.
\newblock In {\em In and out of equilibrium (Mambucaba, 2000)}, volume~51 of
  {\em Progr. Probab.}, pages 185--204. Birkh\"auser Boston, Boston, MA, 2002.

\bibitem{reich1957waiting}
E.~Reich.
\newblock Waiting times when queues are in tandem.
\newblock {\em The Annals of Mathematical Statistics}, 28(3):768--773, 1957.

\bibitem{Seppalainen97a}
T.~Sepp{\"a}l{\"a}inen.
\newblock A scaling limit for queues in series.
\newblock {\em Ann. Appl. Probab.}, 7(4):855--872, 1997.

\bibitem{Simon05}
B.~Simon.
\newblock {\em Trace ideals and their applications}, volume 120 of
  Mathematical Surveys and Monographs,
\newblock American Mathematical Society, Providence, RI, second edition, 2005.

\bibitem{Suidan06}
T.~Suidan.
\newblock A remark on a theorem of {C}hatterjee and last passage percolation.
\newblock {\em J. Phys. A}, 39(28):8977--8981, 2006.

\bibitem{Tembe-Wolff74}
S.~V. Tembe and R.~W. Wolff.
\newblock The optimal order of service in tandem queues.
\newblock {\em Operations Res.}, 22(4):824--832, 1974.

\bibitem{Tracy-Widom94}
C.~A. Tracy and H.~Widom.
\newblock Level-spacing distributions and the {A}iry kernel.
\newblock {\em Comm. Math. Phys.}, 159(1):151--174, 1994.

\bibitem{tracy2009distributions}
C.~.A.~Tracy and H.~Widom.
\newblock The distributions of random matrix theory and their applications.
\newblock {\em New Trends in Mathematical Physics}, 753--765, 2009.

\bibitem{van1988simple}
N.~Van~Dijk and B.~Lamond.
\newblock Simple bounds for finite single-server exponential tandem queues.
\newblock {\em Operations research}, 36(3):470--477, 1988.

\bibitem{Weber79}
R.~R. Weber.
\newblock The interchangeability of ./M/1 queues in series.
\newblock {\em J. Appl. Prob.}, 16(3):690--695, 1979.


\bibitem{xie2009towards}
M.~Xie and M.~Haenggi.
\newblock Towards an end-to-end delay analysis of wireless multihop networks.
\newblock {\em Ad Hoc Networks}, 7(5):849--861, 2009.

\end{thebibliography}

\def\cydot{\leavevmode\raise.4ex\hbox{.}}

\end{document}